\theoremstyle{plain} 
\newtheorem{thm}{Theorem}
\newtheorem*{cor0}{Corollary}
\newtheorem*{thm.rough}{``Theorem''}
\theoremstyle{definition}
\newtheorem{defn}{Definition}
\theoremstyle{remark} 
\newtheorem*{astep}{A-step}
\newtheorem*{pstep}{P-step}
\newtheorem*{cstep}{C-step}
\newcommand{\prob}{\mathsf{P}}
\newcommand{\mpl}{\mathsf{mpl}}
\newcommand{\bin}{{\sf Bin}}
\newcommand{\unif}{{\sf Unif}}
\newcommand{\nm}{{\sf N}}
\newcommand{\chisq}{{\sf ChiSq}}
\newcommand{\A}{\mathcal{A}} 
\newcommand{\RR}{\mathbb{R}}
\newcommand{\U}{\mathscr{U}}
\newcommand{\XX}{\mathbb{X}}
\newcommand{\UU}{\mathbb{U}}
\newcommand{\FF}{\mathbb{F}}
\newcommand{\xbar}{\bar{x}}
\newcommand{\ubar}{\bar{u}}
\newcommand{\G}{\mathscr{G}}
\newcommand{\Gbar}{\overline{\mathscr{G}}}
\newcommand{\Gtilde}{\widetilde{\mathscr{G}}}
\renewcommand{\S}{\mathcal{S}}
\renewcommand{\SS}{\mathbb{S}}
\newcommand{\model}{\mathscr{P}}
\newcommand{\uPi}{\overline{\Pi}}
\newcommand{\lPi}{\underline{\Pi}}
\newcommand{\uPif}{\widetilde\Pi}
\newcommand{\pif}{\pi^{(\text{\sc f})}}
\title{An imprecise-probabilistic characterization of frequentist statistical inference\footnote{On December 21st, 2020, Professor D.~A.~S.~Fraser, from the University of Toronto, passed away at the age of 95.  His work had and will continue to have a tremendous impact on the field of statistics and on me personally.  In fact, Don's ideas were influential to the present developments, so it's an honor for me to dedicate this paper to his memory.}}
\author{Ryan Martin\footnote{Department of Statistics, North Carolina State University, {\tt rgmarti3@ncsu.edu}}}
\date{\today}
\begin{document}

\maketitle 

\begin{abstract}  
Between the two dominant schools of thought in statistics, namely, Bayesian and classical/frequentist, a main difference is that the former is grounded in the mathematically rigorous theory of probability while the latter is not.  In this paper, I show that the latter is grounded in a different but equally mathematically rigorous theory of imprecise probability.  Specifically, I show that for every suitable testing or confidence procedure with error rate control guarantees, there exists a consonant plausibility function whose derived testing or confidence procedure is no less efficient. Beyond its foundational implications, this characterization has at least two important practical consequences: first, it simplifies the interpretation of p-values and confidence regions, thus creating opportunities for improved education and scientific communication; second, the constructive proof of the main results leads to a strategy for new and improved methods in challenging inference problems.  

\smallskip

\emph{Keywords and phrases:} confidence region; consonance; imprecise probability; inferential model; p-value; possibility measure; random set; validity.
\end{abstract}


\section{Introduction}
\label{S:intro} 

In comparisons of the Bayesian and classical/frequentist approaches to statistical inference, an often-cited difference is that the former is rooted in the mathematically rigorous theory of probability while the latter is not.  That is, the Bayesian approach proceeds by specifying prior beliefs about unknowns and updating those, in light of the observed data, using the standard rules of probability, whereas a frequentist approach is less rigid, some might even say ``ad hoc.''  Since being rooted in a mathematically rigorous framework is generally better than not, 
the connection to probability theory is often interpreted as a win for the Bayesian side.  However, that interpretation is based on the incorrect assumption that probability is the only mathematically rigorous model for uncertainty quantification.  The goal of the present paper is to demonstrate that the so-called ``frequentist approach'' is rooted in the mathematically rigorous theory of {\em imprecise probability}, more specifically, that based on distributions of nested random sets, i.e., {\em consonant plausibility functions} \citep[e.g.,][]{shafer1976} or {\em possibility measures} \citep[e.g.,][]{dubois.prade.book}.  As I explain below, this fundamental characterization is impactful in various ways.  

One benefit of this new characterization is in the communication of statistical concepts and the interpretation of statistical results.  As many of us know first hand, as instructors of introductory statistics courses, explaining how to interpret classical/frequentist results can be a challenge.  Indeed, we stress to our students that 
\begin{itemize}
\item a p-value {\em doesn't} represent the probability that the null hypothesis is true, and 
\vspace{-2mm}
\item ``95\% confidence'' {\em doesn't} mean that the true parameter is in the stated interval with probability 0.95.
\end{itemize}
But what {\em does} a p-value or ``95\% confidence'' actually mean?  When we attempt to explain ``95\% confidence'' in terms of the proportion of possible intervals that could have been observed, students' eyes glaze over.  The reason this explanation fails is that the fixed-data interpretation of p-values and confidence intervals should be different from the across-data statistical properties they satisfy.  
Fortunately, the intuitive or pragmatic interpretations typically offered to students are mathematically correct as well.  I tell my students that, for the given data, posited model, etc., 
\begin{itemize}
\item a p-value is a measure of how {\em plausible} the null hypothesis is, and
\vspace{-2mm}
\item a confidence interval is a set of sufficiently {\em plausible} parameter values.  
\end{itemize}
This is more than just replacing the word ``probable'' with ``plausible'' because, as I show below, the p-values and confidence intervals are features derived from a data-dependent plausibility measure, a well-defined mathematical object.  This is similar to how Bayesian probabilities are summaries of the data-dependent posterior distribution, but with a critical difference: in general, there is no Bayesian posterior probability distribution whose summaries agree with the p-values and confidence intervals, both in terms of their numerical values for fixed samples and their sampling distribution properties across samples.  By taking imprecise probabilities into consideration, the pragmatic interpretation of p-values and confidence intervals described above can be made mathematically rigorous.  

The proposed shift in perspective from precise to imprecise probability would also be beneficial beyond the classroom.  One statistical factor contributing to the replication crisis currently plaguing science is a misunderstanding of statistical reasoning, in particular, the meaning of ``statistical significance''  \citep[e.g.,][]{wasserstein.lazar.asa, wasserstein.schirm.lazar.2019}.  Pushes to ban p-values \citep{pvalue.ban} or to abandon statistical significance \citep{mcshane.etal.rss} are aimed at preventing ``$p < 0.05$'' from being misinterpreted as direct evidence supporting a scientific discovery claim.  But nothing in the original literature supports such an interpretation.  In the 1925 edition of \citet{fisher1973b}, and even further back to Edgeworth in 1885, comparing p-values to a pre-determined threshold was intended ``simply as a tool to indicate when a result warrants further scrutiny'' \citep[][p.~2]{wasserstein.schirm.lazar.2019}.  Fisher definitely did not intend for statistical significance to imply scientific discovery, but apparently he did see p-values as a measure of how plausible the null hypothesis is based on the data, posited model, etc.  Unfortunately, p-values look like probabilities and, without a more appropriate mathematical framework to explain them in, the community defaulted to the familiar probability theory and interpreted small p-values as an indication that the alternative is probable, hence a discovery.  What's changed from Fisher's time is that now there exists an imprecise probability framework wherein the mathematics can be developed in a way that aligns with the p-value's intended interpretation.  For example, the sub-additivity property of plausibility measures leads to the following:
\begin{quote}
If a hypothesis is implausible, then its complement must be plausible; however, high plausibility alone isn't evidence of the complementary hypothesis's truthfulness, since the plausibility measure's dual may give it low support. 
\end{quote}
Therefore, the basic mathematical properties of plausibility refute misinterpretations like ``a small p-value (alone) implies a scientific discovery.'' Armed with this understanding, the statistical community can correct its communication errors, no bans required.
 
A second statistical factor relevant to concerns about replicability is the use of methods that are reliable in the sense that they can provably avoid making systematic errors.  This is important because statistical methods are being used as part of the scientific decision-making process, e.g., what existing theories are rejected in favor of new ones.  This notion of reliability is immediately seen as related to controlling error rates in a frequentist sense.  As I'll demonstrate below, the characterization in terms of imprecise probability is important---even essential---for this practical aspect as well.

My starting point is to consider a very general kind of inferential output, namely, a data-dependent capacity; see Definition~\ref{def:im}.  The idea is that the data-dependent capacity, evaluated at any assertion about the unknowns, would measure the plausibility of that assertion, relative to the data and posited model.  Capacities have minimal restriction on their mathematical form, but the statistical context in which they're to be used will introduce more structure.  Indeed, since assigning small plausibility values to assertions that happen to be true may lead to erroneous conclusions, it would be desirable if the capacity controlled the frequency of such errors in a specific sense.  I refer to this error rate control property as {\em validity}; see Definition~\ref{def:valid}.  Not surprisingly, requiring the capacity to satisfy validity tightens up much of its original flexibility.  What mathematical structure can be imposed on the capacity that's consistent with validity?  

Towards an answer to this question, I start with a basic procedure, e.g., a test or confidence region, already known to possess a form of frequentist error rate control, and attempt to construct a suitable capacity that achieves the validity property.  In particular, after some setup and notation in Section~\ref{S:ims}, in Section~\ref{SS:isnt.valid}, I consider the conversion of a confidence region into so-called {\em confidence distribution}, a data-dependent probability measure on the parameter space.  There I show, in the context of some specific examples, that additivity is too demanding of a constraint on the capacity, i.e., additivity/precision and validity are individually too strong to be compatible \citep{balch.martin.ferson.2017}.  Still working with a given confidence region, I show in Section~\ref{SS:is.valid} how to construct an imprecise probability that achieves validity for all possible assertions.  The resulting capacity has the mathematical structure of a consonant plausibility function or, equivalently, a possibility measure.  \citet[][Ch.~10]{shafer1976} argues that consonance is a natural structure to impose in statistical inference problems, and there is literature \citep[e.g.,][Ch.~4.6.1]{destercke.dubois.2014} that highlights the connection between consonance and statistical calibration properties; see, also, \citet{balch2012}, \citet{denoeux.li.2018}, and \citet{imposs}.  However, to my knowledge, the particular reverse-engineering of a consonant plausibility function from a given confidence region proposed here is new.  

One can construct consonant plausibility functions that satisfy the validity property directly, without having a given confidence region to start with.  That's what the developments in \citet{imbasics, imcond, immarg, imbook} are about.  In particular, their framework starts with a representation of the statistical model in terms of an association between data, model parameters, and unobservable auxiliary variables, and their construction proceeds by introducing a random set targeting the unobserved value of that auxiliary variable corresponding to the observed data.  I review their theory and a certain generalization in Section~\ref{S:source}.  It turns out that an even stronger characterization of frequentist inference can be given, and that's the topic of Section~\ref{S:main}.   Roughly, Theorems~\ref{thm:complete}--\ref{thm:complete.test} in that section together can be summarized by the following 
\begin{thm.rough}
Given a testing or confidence procedure which, in addition to some mild regularity conditions, satisfies the frequentist error rate control property in \eqref{eq:size} and \eqref{eq:coverage}, respectively, there exists a consonant plausibility function---with the additional structure described in Section~\ref{S:source}---whose corresponding testing or confidence procedure is no less efficient than the one given.  
\end{thm.rough}

This has three important consequences.  First, it says that a shift in thinking about the ``frequentist approach'' in the traditional terms of testing and confidence procedures to valid imprecise probabilities costs nothing.  That is, every good frequentist procedure currently in use, along with any that could ever be developed, naturally fits in the proposed imprecise probability framework.  Second, it shows that the construction described in Section~\ref{S:source} is not just one way to construct imprecise probabilities whose derived procedures have good frequentist properties, it's effectively the only way.  Third, in addition to not costing anything, there are potential gains.  After some technical remarks and illustrations in Section~\ref{S:remarks}--\ref{S:examples}, I show how the constructive proof of the above ``theorem,'' along with the recent developments in \citet{wasserman.universal}, can be used to construct new procedures for practically challenging problems.  I illustrate this approach with two examples in Section~\ref{S:more}, namely, testing the number of components in mixture models and testing for monotonicity in an unknown density function.  To my knowledge, the test for monotonicity is the first in the literature that provably controls Type~I error.  

The results in this paper make some potentially unexpected connections between imprecise probability and classical/modern statistical theory.  So I take the opportunity in Section~\ref{S:open} to list out a few possible extensions and new directions to pursue.  Finally, some concluding remarks are given in Section~\ref{S:discuss}.

\section{Valid (imprecise) probabilistic inference}
\label{S:ims}

\subsection{Setup and notation}

To set the scene, suppose observable data $X \in \XX$ is modeled by a distribution $\prob_{X|\theta}$ indexed by a parameter $\theta \in \Theta$; then the statistical model is $\model = \{\prob_{X|\theta}: \theta \in \Theta\}$.  Note that the setup here is quite general: $X$, $\theta$, or both can be scalars, vectors, or something else.  A typical case is where $X=(X_1,\ldots,X_n)$ is a collection of $n$ independent and identically distributed (iid) random vectors of dimension $d \geq 1$ and $\theta$ is a vector taking values in $\Theta \subseteq \RR^q$, for $q \geq 1$.  But nothing in what follows requires that data points be iid, that the sample size $n$ be large, or that the model parameter $\theta$ be finite-dimensional.  In fact, I will consider examples below wherein $\theta$ represents the unknown distribution/density function, an infinite-dimensional quantity.  In some cases, the inferential target will be a lower-dimensional feature $\phi=\phi(\theta)$ of the full parameter, taking values in $\Phi = \phi(\Theta)$.  Throughout, when I'm referring to the true value of the parameter, I'll use the notation $\theta$ and $\phi$, but for generic values of these parameter, I'll use the variations $\vartheta$ and $\varphi$.  

The so-called ``frequentist approach'' is focused on procedures---hypothesis tests, confidence regions, etc.---motivated strictly by the statistical properties they satisfy.  For future reference, let me record here the definition of hypothesis tests and confidence regions and their relevant properties.
\begin{itemize}
\item Let $\Theta_0$ be a proper subset of $\Theta$ and consider the (possibly composite) null hypothesis $H_0: \theta \in \Theta_0$.  Let $T_\alpha: \XX \to \{0,1\}$ denote a collection of maps, indexed by $\alpha \in [0,1]$, that define a family of (non-randomized\footnote{I will not consider randomized tests here.  However, \citet{imconformal} considered randomization in a different context but from the same perspective I'm taking in this paper.  So I expect that randomized tests can be treated similarly.})
tests
\[ \text{reject $H_0$ based on data $x$ if and only if $T_\alpha(x)=1$}. \]
I'll say that these are {\em size-$\alpha$ tests} if they control Type~I error at level $\alpha$, i.e., if 
\begin{equation}
\label{eq:size}
\sup_{\theta \in \Theta_0} \prob_{X|\theta}\{T_\alpha(X) = 1\} \leq \alpha, \quad \text{for all $\alpha \in [0,1]$}. 
\end{equation}
\item Consider a family of set-valued maps $C_\alpha: \XX \to 2^\Phi$, indexed by $\alpha \in [0,1]$.  I'll say that these are {\em $100(1-\alpha)$\% confidence regions} for $\phi=\phi(\theta)$ if they have coverage probability at least $1-\alpha$, i.e., if
\begin{equation}
\label{eq:coverage}
\inf_{\theta \in \Theta} \prob_{X|\theta} \{C_\alpha(X) \ni \phi(\theta)\} \geq 1-\alpha, \quad \text{for all $\alpha \in [0,1]$}. 
\end{equation}
\end{itemize}
The focus on procedures makes the frequentist approach difficult to compare, at a fundamental level, with the Bayesian approach wherein the posterior distribution is the primitive and procedures are derivatives thereof.  To put the two frameworks on roughly the same playing field, and to satisfy statisticians' desire to quantify uncertainty about unknowns via (something like) probability, I propose the following formulation.  

Let $\A \subseteq 2^\Theta$ denote a collection of subsets of $\Theta$ which I'll interpret as the collection of assertions about $\theta$ for inferences can be drawn.  That is, I make no distinction between $A$ and the assertion ``$\theta \in A$.''  When dealing with classical probability measures, in the sense of Kolmogorov, if $\Theta$ is uncountable, then I'll implicitly assume that $\A$ is a proper $\sigma$-algebra of $2^\Theta$, but in other cases no such restriction will be needed.  If there is a feature $\phi=\phi(\theta)$ of particular interest, then $\A$ should contain those assertions $A$ of the form $A = \{\vartheta: \phi(\vartheta) \in B\}$, for all relevant assertions $B$ about $\phi$ satisfying the necessary measurability conditions, if any.  At the very least, $\A$ should contain both $\varnothing$ and $\Theta$ and be closed under complementation.  For the situations I have in mind, however, the collection $\A$ should be rich, e.g., at least the Borel $\sigma$-algebra.  The reason is that the goal is to construct something comparable to a Bayesian posterior distribution supported on $(\Theta, \A)$; but see Section~\ref{SS:rich} for more on this point.  

Next, following \citet{choquet1953}, a function $g: \A \to [0,1]$ is called a {\em capacity} if $g(\varnothing) = 0$, $g(\Theta) = 1$, and, for any $A,A' \in \A$, if $A \subseteq A'$, then $g(A) \leq g(A')$.  This is the most flexible---therefore, most complex---imprecise probability model \citep[e.g.,][]{destercke.dubois.2014}, and more structure will be added later when necessary.  I'll also assume that the capacity is {\em sub-additive} in the sense that 
\[ g(A \cup A') \leq g(A) + g(A'), \quad \text{for all disjoint $A,A' \in \A$}. \]
In addition to ordinary/precise probabilities, this setup includes all of the common imprecise probability models. 

\begin{defn}
\label{def:im}
An {\em inferential model} or IM is a mapping from $(x, \model, \ldots)$ to a data-dependent, sub-additive capacity $\uPi_x$ defined on $(\Theta,\A)$.  
\end{defn}

This definition covers cases where the IM output is a probability distribution, for example, Bayes or empirical Bayes \citep{ghosh-etal-book}, fiducial \citep{fisher1935a, fisher1973}, generalized fiducial \citep{hannig.review}, structural \citep{fraser1968}, or confidence distributions \citep{schweder.hjort.book, xie.singh.2012}.  In each case, the ``\ldots'' corresponds to the additional information required to carry out the IM construction, e.g., the Bayesian construction requires a prior distribution.  There's reason to consider IMs whose output is a genuine imprecise probability, so I'll discuss this less-familiar case further below.  

Define the dual/conjugate to the capacity $\uPi_x$ as 
\[ \lPi_x(A) = 1-\uPi_x(A^c), \quad A \in \A. \]
By the assumed sub-additivity of $\uPi_x$, it follows that  
\begin{equation}
\label{eq:order}
\lPi_x(A) \leq \uPi_x(A), \quad A \in \A. 
\end{equation}
This explains the lower- and upper-bar notation and justifies referring to $\lPi_x$ and $\uPi_x$ as lower and upper probabilities, respectively.  A behavioral interpretation of the lower and upper probabilities can be given, but I'll not do so here; see, e.g.,  \citet{walley1991} and \citet{lower.previsions.book} for a gambling-based perspective that leads to a generalization of de~Finetti's classical subjective interpretation of probability.  As an alternative to the behavioral or gambling interpretation, as some might prefer in the context of scientific/statistical inference, the data analyst can simply treat thee pair $(\lPi_x,\uPi_x)$ as determining data-dependent degrees of belief about $\theta$.  That is, small $\uPi_x(A)$ implies that the data $x$ doesn't support the truthfulness of the assertion $A$; similarly, large $\lPi_x(A)$ implies that data $x$ does support the truthfulness of $A$; otherwise, if $\lPi_x(A)$ is small and $\uPi_x(A)$ is large, then data $x$ is not sufficiently informative to make a definitive judgment about $A$.  

For the reader who is unfamiliar with genuinely non-additive capacities, I'll give a quick example.  Let $\mathcal{T}$ denote a {\em random set} \citep[e.g.,][]{molchanov2005, nguyen.book} that takes values in the power set of $\Theta$.  That random set has a probability distribution, which I'll denote by $\prob_{\mathcal{T}}$. Unlike a random variable, whose realizations can be, e.g., either $\leq 10$ or $> 10$, realizations of a random set compared to a fixed subset have three possibilities.  As illustrated in Figure~\ref{fig:rand.set}, $\mathcal{T}$ could be (a)~contained in $A$, (b)~contained in $A^c$, or (c)~have non-empty intersection with both $A$ and $A^c$.  It's having three possible outcomes that leads to non-additivity.  Indeed, define a capacity $\lPi$ on $\Theta$ as 
\[ \uPi(A) = \prob_{\mathcal{T}}(\mathcal{T} \cap A \neq \varnothing), \quad A \subseteq \Theta, \]
and its dual $\lPi(A) = 1 - \uPi(A^c) = \prob_{\mathcal{T}}(\mathcal{T} \subseteq A)$.  Then it's clear that these are sub- and super-additive, respectively, and, in particular, that \eqref{eq:order} holds for all $A$.  

\begin{figure}
\begin{center}
\subfigure[$\mathcal{T} \subseteq A$]{
\begin{tikzpicture}
\filldraw[color=black!20, fill=black!5, rotate=50] (0.5,0) ellipse (2.1 and 1.1);
\filldraw[black] (1.1, 1.8) circle (0pt) node{$A$};
\filldraw[color=red!20, fill=red!5, thick](0,0) circle (0.7);
\filldraw[black] (0,0) circle (0pt) node{$\mathcal{T}$};
\end{tikzpicture}
} \hspace{5mm} %
\subfigure[$\mathcal{T} \subseteq A^c$]{
\begin{tikzpicture}
\filldraw[color=black!20, fill=black!5, rotate=50] (0.5,0) ellipse (2.1 and 1.1);
\filldraw[black] (1.1, 1.8) circle (0pt) node{$A$};
\filldraw[color=red!20, fill=red!5, thick](2.1, -0.8) circle (0.7);
\filldraw[black] (2.1, -0.8) circle (0pt) node{$\mathcal{T}$};
\end{tikzpicture}
} \hspace{5mm} %
\subfigure[Otherwise]{
\begin{tikzpicture}
\filldraw[color=black!20, fill=black!5, rotate=50] (0.5,0) ellipse (2.1 and 1.1);
\filldraw[black] (1.1, 1.8) circle (0pt) node{$A$};
\filldraw[color=red!20, fill=red!5, thick](1.4, -0.3) circle (0.7);
\filldraw[black] (1.4, -0.3) circle (0pt) node{$\mathcal{T}$};
\end{tikzpicture}
}
\end{center}
\caption{Diagrams representing the three possible states of a realization of a random set $\mathcal{T}$, supported on subsets of $\Theta$, relative to a fixed subset $A$ of $\Theta$.}
\label{fig:rand.set}
\end{figure}
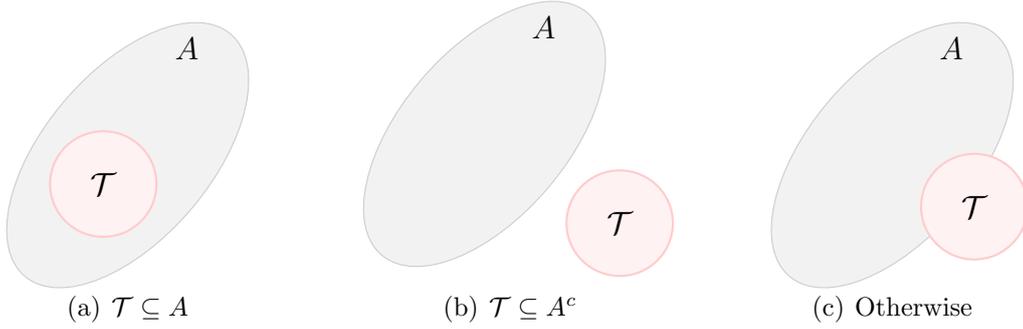

\subsection{Validity property}
\label{SS:valid}

The definition of an IM is too flexible to be useful.  But the degrees of freedom will be reduced considerably once the objective is stated formally.  The IM output will be used to assess the plausibility of various assertions about $\theta$, so it makes sense to require that inferences drawn based on these assessments would not be systematically misleading.  The following definition states this more precisely.  

\begin{defn}
\label{def:valid}
An IM $(x,\model,\ldots) \mapsto \uPi_x$ is {\em valid} if 
\begin{equation}
\label{eq:valid}
\sup_{\theta \in A} \prob_{X|\theta}\{\uPi_X(A) \leq \alpha \} \leq \alpha \quad \text{for all $\alpha \in [0,1]$ and $A \in \A$}. 
\end{equation}
In other words, if assertion $A$ is true, then the random variable\footnote{I'll assume throughout that $x \mapsto \uPi_x(A)$ is Borel measurable for all $A \in \A$.} $\uPi_X(A)$ is stochastically no smaller than $\unif(0,1)$ as a function of $X \sim \prob_{X|\theta}$.  
\end{defn} 

For some intuition about what validity aims to achieve, consider the following line from \citet{reid.cox.2014}:
\begin{quote}
{\em \ldots it is unacceptable if a procedure\ldots of representing uncertain knowledge would, if used repeatedly, give systematically misleading conclusions}.  
\end{quote}
The event ``$\uPi_X(A) \leq \alpha$,'' where the capacity assigns relatively small plausibility to assertion $A$, is one that would give the investigator reason to doubt the truthfulness of $A$.  If, as in \eqref{eq:valid}, the assertion $A$ is true, then this event could lead to misleading conclusion.  By requiring that the probability of such an event be controllable, in a concrete way, through the choice of threshold $\alpha$, those misleading conclusions would not be systematic.  As I argue in \citet{martin.nonadditive}, the choice of threshold ``$\leq \alpha$'' inside the probability statement in \eqref{eq:valid} is basically without loss of generality.  

The ``for all $A \in \A$'' part of \eqref{eq:valid} is important for at least three reasons.  First, for the statistician who develops methods to be used by practitioners off-the-shelf, the responsibility for controlling systematically misleading conclusions falls on his/her shoulders. Therefore, to mitigate their risk, that statistician must be clear about the collection $\A$ for which \eqref{eq:valid} holds.  Even if the method's limits are clearly stated, it is likely that practitioners will try to push those limits, so it's in the statistician's best interest to ensure that \eqref{eq:valid} is achieved for a class $\A$ sufficiently rich that it contains any assertion practitioners might imagine.  This helps ensure that the methods-developing statistician, which is most of us in academia, have skin in the game.  Second, since the class $\A$ is assumed to be closed under complementation, if \eqref{eq:valid} holds for $A$, then it also holds for $A^c$, and it follows that \eqref{eq:valid} is equivalent to 
\[ \sup_{\theta \not\in A} \prob_{X|\theta}\{ \lPi_X(A) \geq 1-\alpha\} \leq \alpha, \quad \text{for all $\alpha \in [0,1]$ and $A \in \A$}. \]
As before, assigning high degree of belief to an assertion that happens to be false may lead to erroneous inference, and the above condition provides control on the frequency of such errors.  Third, if validity as in Definition~\ref{def:valid} holds, then it is relatively straightforward to construct procedures based on the IM's  output that achieves the desirable frequentist properties as discussed above.  

\begin{thm}
\label{thm:freq}
Let $\uPi_x$ be the output from a valid IM, and take any $\alpha \in [0,1]$.   
\begin{enumerate}
\item[{\em (a)}] Consider a testing problem with $H_0: \theta \in A$, for $A \in \A$.  Then the test $T_\alpha(x) = 1\{\uPi_x(A) \leq \alpha\}$ is a size-$\alpha$ test in the sense of \eqref{eq:size}. 
\vspace{-2mm}
\item[{\em (b)}] If $\A$ contains all the singletons, then $C_\alpha(x)$, which has two equivalent expressions
\[ C_\alpha(x) = \begin{cases}
\{ \vartheta \in \Theta: \uPi_x(\{\vartheta\}) > \alpha \} \\
\bigcap \{A \in \A: \lPi_x(A) \geq 1-\alpha\},
\end{cases}
\] 
is a $100(1-\alpha)$\% confidence region for $\theta$ in the sense of \eqref{eq:coverage}. 
\end{enumerate}
\end{thm}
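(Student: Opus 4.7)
The plan is to recognize that both claims follow almost immediately from the validity property in Definition~\ref{def:valid}, applied to different choices of the assertion $A$. The only substantive work is verifying that the two formulas for $C_\alpha(x)$ in part~(b) describe the same set; once that equivalence is in hand, the coverage bound drops out of validity at singletons.

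For part~(a), I would simply set the assertion in the validity condition equal to the null hypothesis set: since $H_0: \theta \in A$ with $A \in \A$, and $T_\alpha(X) = 1$ is by definition the event $\{\uPi_X(A) \leq \alpha\}$, I'd write
\[
\sup_{\theta \in A} \prob_{X|\theta}\{T_\alpha(X) = 1\} = \sup_{\theta \in A} \prob_{X|\theta}\{\uPi_X(A) \leq \alpha\} \leq \alpha,
\]
where the inequality is \eqref{eq:valid}. This is exactly the size-$\alpha$ condition \eqref{eq:size}, so part~(a) is done.

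For part~(b), I would first establish the equivalence of the two displayed forms of $C_\alpha(x)$. Using the duality $\lPi_x(A) \geq 1-\alpha \iff \uPi_x(A^c) \leq \alpha$, the second form becomes $\bigcap\{A \in \A : \uPi_x(A^c) \leq \alpha\}$. To show containment of the second form in the first, I'd take $\vartheta$ in the intersection and suppose for contradiction that $\uPi_x(\{\vartheta\}) \leq \alpha$; then taking $A = \Theta \setminus \{\vartheta\}$ (which lies in $\A$ by closure under complementation and because $\A$ contains singletons) places $A$ in the indexing family but has $\vartheta \notin A$, a contradiction. For the reverse containment, I'd take $\vartheta$ with $\uPi_x(\{\vartheta\}) > \alpha$ and any $A$ with $\uPi_x(A^c) \leq \alpha$; if $\vartheta \notin A$, then $\{\vartheta\} \subseteq A^c$, and monotonicity of the capacity gives $\uPi_x(\{\vartheta\}) \leq \uPi_x(A^c) \leq \alpha$, again a contradiction.

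Once the two forms are seen to coincide, I would apply validity with $A = \{\theta\}$, which lies in $\A$ by hypothesis, to obtain
\[
\prob_{X|\theta}\{C_\alpha(X) \not\ni \theta\} = \prob_{X|\theta}\{\uPi_X(\{\theta\}) \leq \alpha\} \leq \alpha,
\]
giving the coverage bound \eqref{eq:coverage} after taking the infimum over $\theta \in \Theta$. The main (very mild) obstacle is the equivalence of the two definitions in part~(b); everything else is a direct unpacking of the validity definition, so I don't expect any real difficulty beyond being careful with the closure properties of $\A$.
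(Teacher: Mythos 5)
Your proposal is correct and follows essentially the same route as the paper: part~(a) is a direct unpacking of the validity condition \eqref{eq:valid} with the null set as the assertion, and part~(b) combines the duality $\lPi_x(A)=1-\uPi_x(A^c)$ with monotonicity of the capacity to identify the two forms of $C_\alpha(x)$, then applies validity at the singleton $\{\theta\}$ for coverage. The only difference is that you verify both containments in the set equality explicitly (the paper spells out only the containment of the contour form in the intersection), which is a welcome bit of extra care but not a different argument.
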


\begin{proof}
Part~(a) follows immediately from Definition~\ref{def:valid}.  For Part~(b), note first that the singleton containment assumption is required in order for $\uPi_x(\{\vartheta\})$ to be well-defined.  Next, it is easy to see, from \eqref{eq:valid} applied to $A = \{\vartheta\}$, that the coverage condition is satisfied.  All that remains to be shown is that the two expressions for $C_\alpha(x)$ are the same.  To see this, note that 
\[ \uPi_x(\{\vartheta\}) > \alpha \iff \lPi_x(\{\vartheta\}^c) < 1-\alpha. \]
By monotonicity of the capacity, it follows that $\lPi_x(A) < 1-\alpha$ for any $A \in \A$ such that $A \subseteq \{\vartheta\}^c$.  Therefore, $\vartheta$ is contained in every $A$ in the intersection defining $C_\alpha(x)$, so it's also in the intersection.   
\end{proof}

To the primary question I intend to answer here: what additional structure must be imposed on the data-dependent capacity  $\uPi_x$ such that the validity condition \eqref{eq:valid} holds?  As a first step, notice that it can be made to hold trivially, by taking $\uPi_x(A) = 1$ for all $x$ and all $A \in \A$; this corresponds to a so-called {\em vacuous belief} model.  Of course, such a choice is not practically useful, and it imposes the maximal structure on the capacity so, while it does technically answer the question, this is not the particular answer I'm looking for.  To investigate this further, I'll start with a frequentist procedure, namely, confidence regions, that already satisfy their own kind of validity property, and see what happens concerning \eqref{eq:valid} when these are used to construct a capacity on $\Theta$.

\section{Confidence and validity}
\label{S:conf.valid}

\subsection{Confidence as probability isn't (really) valid}
\label{SS:isnt.valid}

Confidence is a fundamental concept in statistics, ``arguably the most substantive ingredient in modern model-based theory'' \citep{fraser2011.rejoinder}, dating back to \citet{neyman1941} and also, indirectly, to Fisher through its close ties to fiducial inference \citep{zabell1992, efron1998}.  Despite the uncanny resemblance between confidence and (fiducial) probability, their difference was clear to \citet[][p.~74]{fisher1973}:
\begin{quote}
{\em [Confidence regions] were I think developed and advocated under the impression that in a wider class of cases they could provide information similar to that of the probability statements derived by the fiducial argument.  It is clear, however, that no exact probability statements can be based on them.}
\end{quote}
But since most statisticians can't draw the line between confidence and probability so clearly as Fisher, it's worth asking if the distinction is even necessary: can we ignore the difference and treat confidence as probability?  Below I describe this conversion from confidence to probability and its implications in terms of the property \eqref{eq:valid}.  


Fix data $X=x$ and a family $\{C_\alpha: \alpha \in [0,1]\}$ of confidence regions for $\theta$.  A so-called {\em confidence distribution} \citep[e.g.,][]{schweder.hjort.2002, schweder.hjort.book, xie.singh.2012, nadarajah.etal.2015} corresponds to a countably additive capacity $\Pi_x$ such that 
\[ \Pi_x\{C_\alpha(x)\} = 1-\alpha, \quad \text{for all $\alpha \in [0,1]$ and all $x$}. \]
I've removed the upper-line on the capacity notation to emphasize that this is a probability measure.  The simplest case is where the family $\{C_\alpha: \alpha \in [0,1]\}$ consists of confidence upper limits, so that the parametric curve $\{(C_\alpha(x), 1-\alpha): \alpha \in [0,1]\}$ defines a distribution function that determines $\Pi_x$.  More generally, $C_\alpha(x)$ is a level set for the confidence density which, together with the above display, determines the entire distribution $\Pi_x$.  In any case, this confidence distribution approach boils down to interpreting confidence regions in the way that students in introductory statistics courses are told not to, i.e., ``$\theta$ is in $C_\alpha(x)$ with probability $1-\alpha$.''  This is a subjective probability that the data analyst is assigning, so it can't be ``wrong'' in principle.  The question, however, is if representing confidence as a probability distribution achieves the goals of probabilistic inference.  

A major selling point for those frameworks that base inference on a distribution is that one can apply the probability calculus to derive an answer to any relevant question.  For example, \citet[][p.~5]{xie.singh.2012} write: ``[a confidence distribution] contains a wealth of information for inference; much more than a point estimator or a confidence interval.'' Presumably, part of this extra information is the ability to assign probabilities to all sorts of assertions, but the value in this lies in whether it can be used reliably across applications.  In particular, is the confidence distribution valid in the sense of Definition~\ref{def:valid} for a sufficiently rich collection $\A$ of assertions?  

Consider arguably the simplest scalar parameter example, namely, $X \sim \nm(\theta,1)$, and take $C_\alpha(x) = x + z_{1-\alpha}$, where $z_{1-\alpha}$ is the $(1-\alpha)$-quantile of $\nm(0,1)$.  Then it's easy to check that $\Pi_x = \nm(x,1)$ is a confidence distribution in the sense of Definition~1 in \citet{xie.singh.2012}, that is, 
\[ \Pi_X\{ (-\infty, \theta] \} \sim \unif(0,1), \quad \text{as a function of $X \sim \nm(\theta,1)$, for all fixed $\theta$}. \]
This implies validity in the sense of Definition~\ref{def:valid} above, for a collection $\A$ that contains the half-lines $\{(-\infty, t]: t \in \RR\}$.  But what about other kinds of assertions besides half-lines?  After all, the confidence distribution was defined via a collection of confidence upper limits, or half-lines, so validity only for half-lines arguably hasn't added anything beyond the confidence limits themselves.  An advantage of working with probability is that there is a familiar procedure to go from the distribution of $\theta$ to that of $\phi=\phi(\theta)$.  As an example, consider $\phi = |\theta|$.  It's straightforward to derive the corresponding confidence distribution for $\phi$ via the textbook probability calculus.  However, {\em that derived distribution for $\phi$ does not meet the conditions of a confidence distribution}. Indeed, Figure~\ref{fig:abstheta} plots the distribution function 
\begin{equation}
\label{eq:cd.cdf}
G_\theta(\alpha) = \prob_{X|\theta}\bigl[ \Pi_X(\{\vartheta: |\vartheta| \leq |\theta|\}) \leq \alpha \bigr], \quad \alpha \in [0,1], 
\end{equation}
for the case when $\theta=0.5$.  According to Definition~1 in \citet{xie.singh.2012}, in order for this marginal ``confidence distribution'' of $\phi$ to be a genuine  confidence distribution, $\alpha \mapsto G_\theta(\alpha)$ must be a $\unif(0,1)$ distribution function.  Clearly it's not.  Therefore, if one starts with a genuine confidence distribution and applies the ordinary probability calculus, then the result need not be a confidence distribution.  

\begin{figure}[t]
\begin{center}
\scalebox{0.70}{\includegraphics{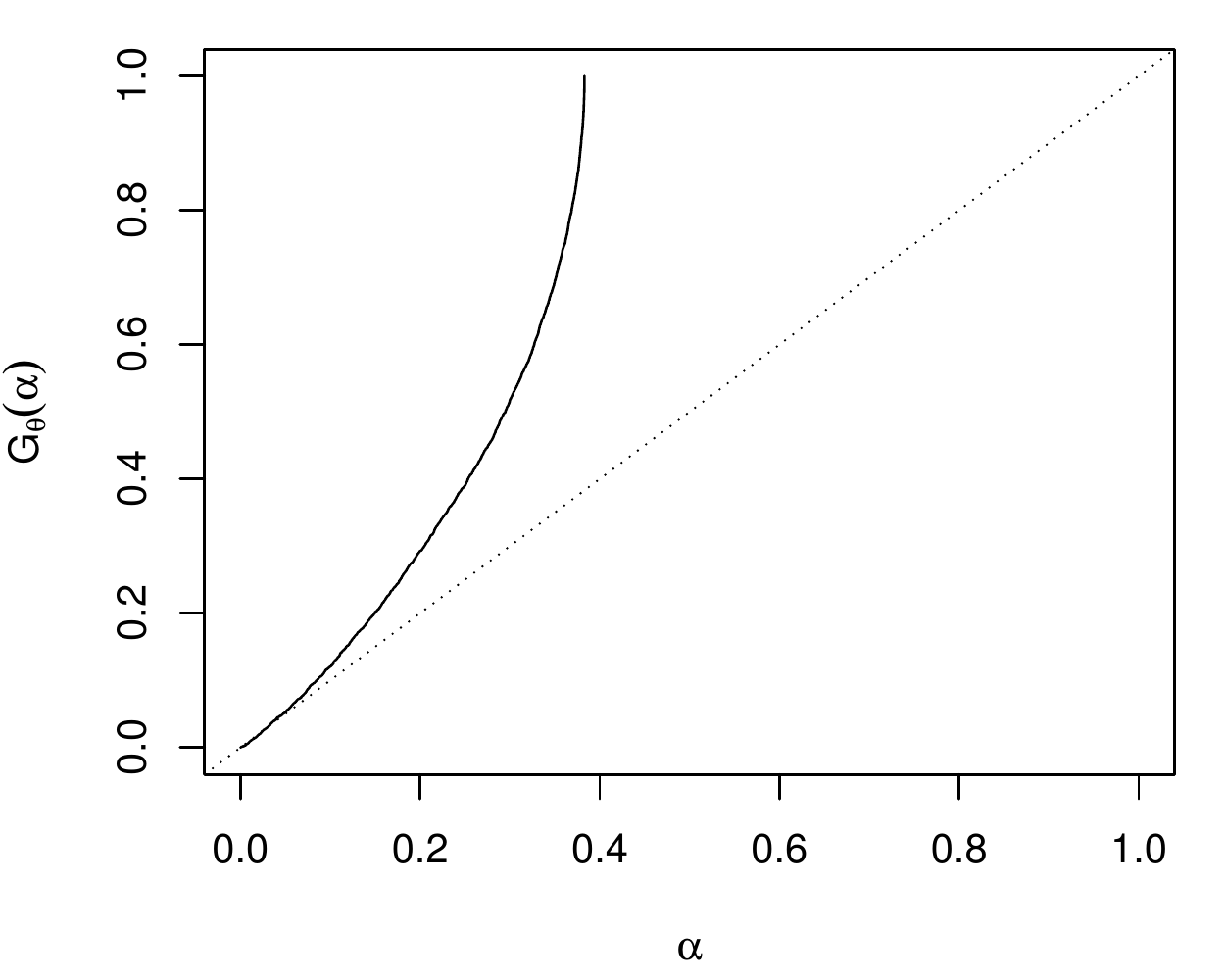}}
\end{center}
\caption{Plot of the distribution function (solid) $\alpha \mapsto G_\theta(\alpha)$ as defined in \eqref{eq:cd.cdf}, with $\theta=0.5$.  According to Definition~1 in \citet{xie.singh.2012}, this solid line needs to be on the dotted diagonal line in order to be a confidence distribution.}
\label{fig:abstheta}
\end{figure} 

In terms of validity from Section~\ref{SS:valid}, the fact that the distribution function for $\phi=|\theta|$ in Figure~\ref{fig:abstheta} is above the diagonal line means that the confidence distribution fails to be valid in the sense of Definition~\ref{def:valid} if $\A$ contains certain bounded intervals.  If $\A$ can't contain bounded intervals, there there is good reason to believe that the biggest collection $\A$ for which the confidence distribution is valid is the set of half-lines, which is basically just the collection of confidence upper limits I started with.  So, if I insist that the features derived from $\Pi_x$ be reliable in the sense of \eqref{eq:valid}, then apparently the confidence distribution contains no additional information beyond what was already contained in the original collection of confidence limits.  

The situation is even more problematic when the parameter is multi-dimensional, so much so that it's even been recommended not to attempt constructing them:
\begin{quote}
{\em joint [confidence distributions] should not be sought, we think, since they might easily lead the statistician astray} \citep[][p.~59]{schweder.hjort.cd.discuss}.
\end{quote}
While confidence distributions are not recommended when $\theta$ is multi-dimensional, an illustration is needed both to clarify what Schweder and Hjort mean by ``lead the statistician astray'' and for comparison with what's coming in Section~\ref{SS:is.valid}.  Consider the so-called Fieller--Creasy problem \citep{fieller1954, creasy1954} which, in its simplest form, concerns inference about the ratio $\phi = \theta_1/\theta_2$ based on two independent normal observations, $X_1 \sim \nm(\theta_1,1)$ and $X_2 \sim \nm(\theta_2, 1)$.  The only option for a joint confidence distribution for $\theta=(\theta_1,\theta_2)$, based on $x=(x_1,x_2)$, is $\Pi_x=\nm_2(x, I_2)$, an independent bivariate normal with mean $x$ and unit variances; this is also Fisher's fiducial distribution and Jeffreys's ``default-prior'' Bayesian posterior.  The marginal distribution $\Pi_x^\phi$ for $\phi = \theta_1/\theta_2$ can be derived as in \citet{hinkley1969}, but it's easier to approximate the probabilities using Monte Carlo. 
In particular, for an assertion $B = (-\infty,9]$ concerning $\phi$, I can simulate pairs $(\vartheta_1,\vartheta_2)$ from $\Pi_x$ and approximate $\Pi_x^\phi(B)$ by the proportion of those simulated pairs satisfy $\vartheta_1/\vartheta_2 \leq 9$.  As before, I'm interested in the distribution function 
\begin{equation}
\label{eq:cd.cdf.2}
G_\theta(\alpha) = \prob_{X|\theta}\bigl[ \Pi_X^\phi(B) \leq \alpha \bigr], \quad \alpha \in [0,1]. 
\end{equation}
Figure~\ref{fig:fc} shows a plot of the distribution function in \eqref{eq:cd.cdf.2} when $\theta=(10,1)$.  In this case, $\phi=10$ and the assertion $B$ about $\phi$ is false.  One would, therefore, expect that the probability assigned to $B$ would tend to be small.  However, as is clear from the plot, the $\Pi_X(B)$ values are {\em almost always close to 1}, hence false confidence.  
One possible explanation for the false confidence phenomenon in this case is that the Fieller--Creasy problem is one member of the surprisingly challenging class of examples described by \citet{gleser.hwang.1987}; see, also, \citet{dufour1997}.  What makes this class of examples special is that any confidence interval having almost surely finite length necessarily has coverage probability equal to 0.  Note that intervals derived from quantiles of a {\em distribution} can't be unbounded.  Therefore, the confidence ``distribution'' derived in, e.g., \citet[][p.~61]{schweder.hjort.cd.discuss}, for the Fieller--Creasy problem, which yields intervals for $\phi$ with exact coverage probability, can't be a genuine distribution; see Section~\ref{SS:is.valid}.  

\begin{figure}[t]
\begin{center}
\scalebox{0.70}{\includegraphics{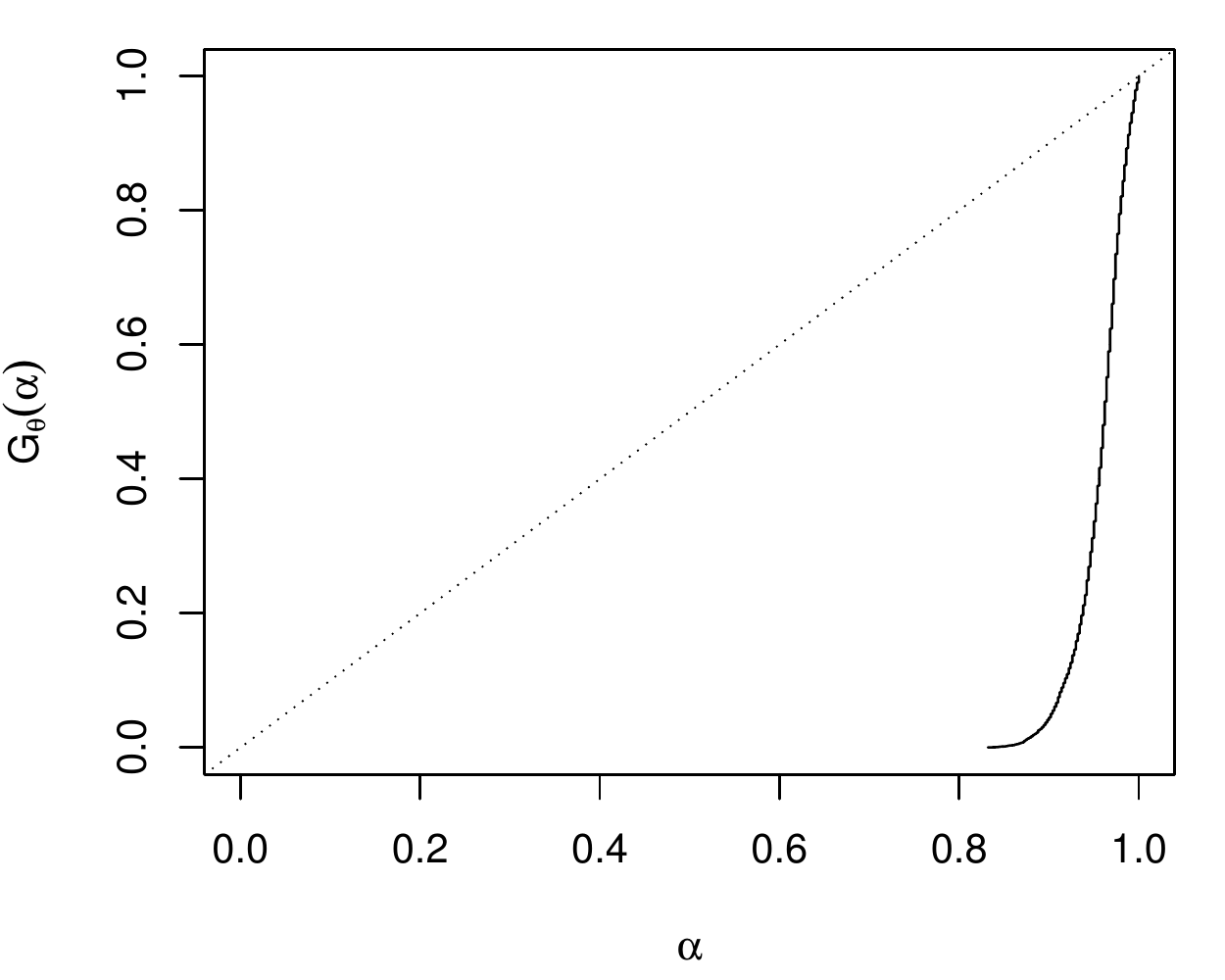}}
\end{center}
\caption{Plot of the distribution function (solid) $\alpha \mapsto G_\theta(\alpha)$ as defined in \eqref{eq:cd.cdf.2}, with $\theta=(1,0.1)$.  The distribution function pushed to the right implies $\Pi_X^\phi(B)$ tends to be large, even though $B$ is a false assertion.}
\label{fig:fc}
\end{figure} 

Much of what I said above is known.  For years, Fraser has been warning about the risk associated with using probability in place of confidence; some recent papers are \citet{fraser2011, fraser2011.rejoinder, fraser.cd.discuss, fraser.copss, fraser2018} and \citet{fraser.etal.2016}.  The notion of validity, which plays a significant role in what follows, provides a new angle from which these problems can be viewed and, hopefully, resolved.  Indeed, the following {\em false confidence theorem} \citep{balch.martin.ferson.2017} highlights the disconnect between the strong frequentist notion of validity and the additivity property of probability distributions. 

\begin{thm}[False Confidence]
\label{thm:fct}
Let $\Pi_X$ be a continuous data-dependent probability distribution, e.g., Bayesian, fiducial, confidence distribution, etc.  Then for any $\theta \in \Theta$, any $\alpha \in [0,1]$, and any increasing function $g: [0,1] \to [0,1]$, there exists $A \subset \Theta$ such that 
\[ A \ni \theta \quad \text{and} \quad \prob_{X|\theta}\{\Pi_X(A) \leq g(\alpha) \} > \alpha. \]
\end{thm}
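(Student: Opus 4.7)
My plan is to exploit the continuity of $\Pi_X$---which I interpret in the standard sense of being atomless, so that $\Pi_X(\{\vartheta\}) = 0$ for every $\vartheta \in \Theta$ and every realization of $X$---to construct a set $A$ containing $\theta$ whose random mass $\Pi_X(A)$ is small with high $\prob_{X|\theta}$-probability. Throughout I assume $\alpha < 1$; the boundary case $\alpha = 1$ is vacuously excluded, since no probability can strictly exceed $1$.

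The main construction is a shrinking-neighborhoods argument. Pick a decreasing sequence $\{A_n\}$ of measurable subsets of $\Theta$, each containing $\theta$, with $\bigcap_n A_n = \{\theta\}$---for instance, metric balls $A_n = \{\vartheta : d(\vartheta,\theta) \leq 1/n\}$ when $\Theta$ is a metric space. Countable additivity of $\Pi_X$ together with atomlessness yields
\[
\Pi_X(A_n) \;\downarrow\; \Pi_X(\{\theta\}) \;=\; 0 \quad \text{for every fixed realization of } X.
\]
Thus, whenever $g(\alpha) > 0$, the indicators $\mathbf{1}\{\Pi_X(A_n) \leq g(\alpha)\}$ form an increasing sequence converging pointwise to $1$, and the monotone convergence theorem gives $\prob_{X|\theta}\{\Pi_X(A_n) \leq g(\alpha)\} \uparrow 1$; I then pick $n$ large enough that this probability strictly exceeds $\alpha$ and take $A = A_n$.

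The main obstacle is really just pinning down the word ``continuous'' so that the atomlessness hypothesis is available; once that is granted (along with measurability of $x \mapsto \Pi_x(A)$, which the paper's framework already presupposes), the rest is a routine application of countable additivity and monotone convergence. A secondary subtlety is the degenerate case $g(\alpha) = 0$, which the shrinking argument cannot handle because the masses $\Pi_X(A_n)$ need not vanish at any finite stage; for that case I fall back on the trivial choice $A = \{\theta\}$, for which atomlessness of $\Pi_X$ gives $\Pi_X(\{\theta\}) = 0$ almost surely, so $\prob_{X|\theta}\{\Pi_X(\{\theta\}) \leq 0\} = 1 > \alpha$. A final minor point is that the shrinking construction requires enough structure on $\Theta$ to produce a countable sequence of measurable neighborhoods of $\theta$ intersecting to $\{\theta\}$, which is automatic under any standard separability assumption on the parameter space.
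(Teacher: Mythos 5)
The paper itself states this theorem without proof, citing \citet{balch.martin.ferson.2017}, and your shrinking-neighborhood argument is essentially the proof given there: take measurable neighborhoods $A_n \downarrow \{\theta\}$, use continuity from above plus atomlessness to get $\Pi_X(A_n)\downarrow 0$ for each realization, and continuity from below of $\prob_{X|\theta}$ to push the probability above $\alpha$. Your handling of the edge cases ($\alpha=1$, which the stated inequality cannot accommodate, and $g(\alpha)=0$ via the singleton $A=\{\theta\}$) is sensible, so the proposal is correct and matches the cited argument.
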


In words, the false confidence theorem says that there exists true assertions that tend to be assigned relatively small $\Pi_X$-probability; of course, this can be equivalently stated in terms of assigning relatively large probabilities to false assertions, which is the version presented in \citet{balch.martin.ferson.2017} and explains the name ``false confidence.''  Since assigning relatively small (resp.~large) probability to true (resp.~false) assertions creates opportunities for misleading inferences, this should be avoided.  The theorem states that {\em any data-dependent probability distribution} suffer from this affliction, so it's not an issue of the Bayesian's prior, how the fiducial or confidence distribution is constructed, etc., it's a fundamental shortcoming of probability as a quantification of uncertainty in the context of statistical inference.  Short of cutting down the collection $\A$ of candidate assertions in a non-trivial way (see Section~\ref{SS:rich}), the only way to completely avoid false confidence is to consider imprecise probabilities, as I discuss next.

\subsection{Confidence as plausibility is valid}
\label{SS:is.valid}

Recall the standard connection between confidence regions and significance tests.  In particular, for a test that rejects the null hypothesis $H_0: \theta=\vartheta$ in favor of the alternative $H_1: \theta \neq \vartheta$, at level $\alpha \in [0,1]$, if $C_\alpha(x) \not\ni \vartheta$, the p-value is given by 
\begin{equation}
\label{eq:cr.contour}
\pi_x(\vartheta) = \sup\{\alpha \in [0,1]: C_\alpha(x) \ni \vartheta\}, \quad \vartheta \in \Theta. 
\end{equation}
Allowing $\vartheta$ to vary determines a ``p-value function'' \citep{pvalue.course} that has many other names, including {\em confidence curve} \citep{birnbaum1961, schweder.hjort.book}, {\em possibility function} \citep{zadeh1978, dubois2006}, {\em preference function} \citep{spjotvoll1983, blaker.spjotvoll.2000}, and {\em significance function} \citep{fraser1990, fraser1991}, among others.  Here, I adopt the terminology {\em plausibility contour} for two reasons: first, it aligns with the natural interpretation of the output derived from frequentist procedures and, second, there is a formal mathematical theory available for describing these objects.  

Assume that the collection $\{C_\alpha: \alpha \in [0,1]\}$ is {\em nested} in the sense that
\begin{equation}
\label{eq:nested.C}
\text{for all $x \in \XX$} \quad \begin{cases} 
\; \alpha \geq \alpha' \implies C_\alpha(x) \subseteq C_{\alpha'}(x) \\
\; \textstyle\bigcup_\alpha C_\alpha(x) = \Theta \quad \text{and} \quad \textstyle \bigcap_\alpha C_\alpha(x) \neq \varnothing. 
\end{cases}
\end{equation}
The latter non-emptiness condition amounts to there existing a point, say, $\hat\theta(x)$ that belongs to every confidence region $C_\alpha(x)$, which is a standard relationship between point estimators and confidence regions.  For example, the Bayesian maximum {\em a posteriori} or MAP estimator belongs to all of the highest posterior density credible regions.  Note that it is not necessary for the intersection in \eqref{eq:nested.C} to be a single point; see Section~\ref{SS:binomial}.  Also, in cases where $C_\alpha(x)$ is ``one-sided,'' like a confidence lower/upper bound, then $\hat\theta(x)$ can be infinite. In general, nested $C_\alpha$ implies that $\pi_x$ in \eqref{eq:cr.contour} satisfies 
\begin{equation}
\label{eq:supremum}
\sup_{\vartheta \in \Theta} \pi_x(\vartheta) = 1. 
\end{equation}
From here, one can construct a capacity on $(\Theta,\A)$, with $\A = 2^\Theta$, via the rule
\begin{equation}
\label{eq:plaus.consonance}
\uPi_x(A) = \sup_{\vartheta \in A} \pi_x(\vartheta), \quad A \subseteq \Theta. 
\end{equation}
This capacity satisfies the properties of a {\em consonant plausibility function} \citep[e.g.,][]{shafer1976, shafer1987, balch2012} or, equivalently, a {\em possibility measure} \citep[e.g.,][]{zadeh1978, dubois.prade.1987, dubois.prade.book}.  Here I'll refer to the quantity in \eqref{eq:plaus.consonance} as a {\em plausibility measure}.  As \citet[][p.~226]{shafer1976} explains, consonance isn't an appropriate model for all kinds of evidence, but it's ``well adapted'' for statistical inference problems, where ``evidence for a cause is provided by an effect.'' 

There is a rich mathematical theory associated with plausibility/possibility measures, most of which will not be needed here.  I refer the interested reader to, e.g., the series of papers \citet{cooman.poss1, cooman.poss2, cooman.poss3}.  The final technical comment I'd like to make about plausibility/possibility measures in general is that these are among the most simple imprecise probability models, that is, they have a lot of structure.  Specifically, while the plausibility measure is a genuine set function, it's completely determined by its values on singleton sets or, in other words, it's completely determined by its contour function \eqref{eq:plaus.consonance}.  Compare this to traditional theory where a probability measure, a complex set function, is expressed as the integral of a density, a point function; the difference here is that integration is replaced by optimization. 

Two immediate insights about the fixed-$x$ interpretation of the confidence region $C_\alpha(x)$ emerge based on the derived belief and plausibility measures.  First, note that, by definition of $\pi_x$ in \eqref{eq:cr.contour}, 
\[ C_\alpha(x) = \{\vartheta \in \Theta: \pi_x(\vartheta) > \alpha\}. \]
Therefore, there is a mathematical explanation for the interpretation I gave in Section~\ref{S:intro} of a confidence region as a set of parameter values that are individually sufficiently plausible.  Alternatively, and again from the above definitions, 
\[ \lPi_x\{C_\alpha(x)\} = 1-\alpha, \]
which implies that an equivalent interpretation of $C_\alpha(x)$, for given $x$, is as a set of parameter values which are, collectively, sufficiently believable.  The former interpretation is more in line with a hypothesis testing mode of thinking, i.e., any parameter value that's inside $C_\alpha(x)$ is plausible.  The latter is perhaps more natural, and consistent with what confidence distributions aimed to achieve, but different from the familiar textbook way of thinking which is based on hypothesis testing. 

The belief and plausibility measures described above are well-defined and have their own mathematical properties, but the words ``belief'' and ``plausibility'' also have meaning in everyday language.  So even though $C_\alpha(x)$ is, mathematically, a set of parameter values whose point plausibility exceeds $\alpha$, what is my justification for concluding that the points $C_\alpha(x)$ contains are plausible in the everyday sense?  The justification comes from the statistical properties that the confidence region satisfies.  That is, by definition, 
\begin{equation}
\label{eq:pval.unif}
\prob_{X|\theta}\{C_\alpha(X) \not\ni \theta\} = \prob_{X|\theta}\{ \pi_X(\theta) \leq \alpha \} \leq \alpha. 
\end{equation}
Since the event ``$\pi_X(\theta) \leq \alpha$'' is rare (in a precise sense), I have reason to treat any value $\vartheta$ that satisfies $\pi_x(\vartheta) > \alpha$ as sufficiently plausible.  Similarly, since the event ``$C_X(\alpha) \not\ni \theta$'' is rare, I have reason to treat the collection of $\vartheta$ that satisfy $\vartheta \in C_\alpha(x)$ as sufficiently believable.  This line of reasoning is consistent with the {\em reliabilist} perspective on justified beliefs in epistemology \citep[e.g.,][]{goldman1979}.  Note the difference between my approach and that of introductory textbooks: the latter attempt to interpret confidence regions through its properties, whereas the former gives confidence regions a natural, everyday language interpretation which is justified by their statistical properties.

The consonance property created by the definition \eqref{eq:plaus.consonance} turns out to be critical for validity.  Indeed, consonance immediately implies 
\[ \{\text{$\theta \in A$ and $\uPi_X(A) \leq \alpha$}\} \implies \pi_X(\theta) \leq \alpha. \]
From the assumed coverage probability property \eqref{eq:coverage} of the given family of confidence regions, and its reinterpretation through the equality in \eqref{eq:pval.unif}, it is easy to see that 
\begin{equation}
\label{eq:plaus.valid}
\sup_{\theta \in A} \prob_{X|\theta} \{\uPi_X(A) \leq \alpha\} \leq \alpha, \quad \text{for all $A \subseteq \Theta$ and all $\alpha \in [0,1]$}. 
\end{equation}
This proves the following elementary yet fundamental result, a bit stronger than what's referred to in \citet{walley2002} as the {\em fundamental frequentist principle}. 

\begin{thm}
\label{thm:consonance.valid}
Let $\{C_\alpha: \alpha \in [0,1]\}$ be a family of confidence regions satisfying \eqref{eq:coverage} whose corresponding contour function in \eqref{eq:cr.contour} satisfies \eqref{eq:supremum}.  Then the IM with capacity $\uPi_x$ being the plausibility measure in \eqref{eq:plaus.consonance} is valid in the sense of Definition~\ref{def:valid} for $\A=2^\Theta$. 
\end{thm}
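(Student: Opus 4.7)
The plan is to exploit the consonance structure of $\uPi_x$ in \eqref{eq:plaus.consonance} to reduce validity over the full power set $\A = 2^\Theta$ to a single pointwise calibration statement about the contour function $\pi_x$ evaluated at the true parameter, and then extract that calibration directly from the coverage property \eqref{eq:coverage} of the given family of confidence regions.

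First I would fix $\alpha \in [0,1]$, $A \subseteq \Theta$, and $\theta \in A$, and observe that, by definition, $\uPi_X(A) = \sup_{\vartheta \in A} \pi_X(\vartheta) \geq \pi_X(\theta)$. This monotone bound immediately gives the set inclusion $\{\uPi_X(A) \leq \alpha\} \subseteq \{\pi_X(\theta) \leq \alpha\}$, so the validity requirement for an arbitrary $A$ reduces to the pointwise bound $\prob_{X|\theta}\{\pi_X(\theta) \leq \alpha\} \leq \alpha$, uniformly in $\theta$. This is the entire role of consonance in the argument: for any set containing $\theta$, its plausibility dominates the plausibility of the singleton $\{\theta\}$, so the worst case across $\A = 2^\Theta$ is already realized at singletons.

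Next I would establish the pointwise calibration from \eqref{eq:coverage}. By the nested structure \eqref{eq:nested.C} and the definition \eqref{eq:cr.contour}, $\theta \in C_\beta(x)$ forces $\beta \leq \pi_x(\theta)$, so contrapositively $\pi_x(\theta) < \beta$ forces $\theta \notin C_\beta(x)$. Hence for any $\epsilon > 0$,
\[ \{\pi_X(\theta) \leq \alpha\} \subseteq \{\pi_X(\theta) < \alpha + \epsilon\} \subseteq \{C_{\alpha+\epsilon}(X) \not\ni \theta\}, \]
and \eqref{eq:coverage} bounds the probability of the rightmost event by $\alpha + \epsilon$. Sending $\epsilon \downarrow 0$ yields $\prob_{X|\theta}\{\pi_X(\theta) \leq \alpha\} \leq \alpha$. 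Combining this with the consonance reduction and taking $\sup_{\theta \in A}$ delivers \eqref{eq:plaus.valid}, which is precisely Definition~\ref{def:valid} for $\A = 2^\Theta$.

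I do not expect any serious obstacle beyond bookkeeping. The one subtle point is that the supremum in \eqref{eq:cr.contour} need not be attained, which is why the $\epsilon$-slackening above is needed rather than writing $\{\pi_X(\theta) \leq \alpha\} = \{C_\alpha(X) \not\ni \theta\}$ as a plain equality. Hypothesis \eqref{eq:supremum} plays only the structural role of ensuring $\uPi_x(\Theta) = 1$, so that $\uPi_x$ qualifies as a genuine capacity rather than a merely subnormal one; it is not used in the validity inequality itself.
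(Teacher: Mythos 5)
Your proposal is correct and follows essentially the same route as the paper: consonance reduces validity over all of $2^\Theta$ to the singleton calibration $\prob_{X|\theta}\{\pi_X(\theta) \leq \alpha\} \leq \alpha$, which is then read off from the coverage property \eqref{eq:coverage} via the identification of $\{\pi_X(\theta) \leq \alpha\}$ with non-coverage as in \eqref{eq:pval.unif}. Your $\epsilon$-slackening is just a slightly more careful bookkeeping of the case where the supremum in \eqref{eq:cr.contour} is not attained, and your remark about the role of \eqref{eq:supremum} is consistent with how the paper uses it.
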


This result confirms the claim in this subsection's heading that confidence, as interpreted as a plausibility measure satisfying the consonance property in \eqref{eq:plaus.consonance} is valid.  Being valid on its own may not be so meaningful, but being valid with {\em no restrictions on $\A$} is important.  To be clear, the reason I emphasize ``no restrictions on $\A$'' is not because I care about weird/non-measurable assertions.  Rather, in order to not ``lead the statistician astray,'' it's practically important to be able to reliably marginalize to any feature $\phi=\phi(\theta)$ of interest.  Converting confidence into a probability distribution is unable to achieve this, but  Theorem~\ref{thm:consonance.valid} says that it can be achieved by converting confidence to a (consonant) plausibility measure.  To see this in action, let $\phi$ be an interest parameter, taking values in $\Phi$, and $B \in 2^\Phi$ a relevant assertion about $\phi$.  Define a marginal IM for $\phi$ with plausibility measure 
\[ \uPi_x^\phi(B) = \uPi_x(\{\vartheta \in \Theta: \phi(\vartheta) \in B\}) = \sup_{\vartheta: \phi(\vartheta) \in B} \pi_x(\vartheta). \]
If I define $A=\{\vartheta: \phi(\vartheta) \in B\}$, then clearly $A \in \A$ and validity of the marginal IM for $\phi$ follows from Theorem~\ref{thm:consonance.valid}.  This proves the following

\begin{cor0}
The marginal IM for $\phi=\phi(\theta)$ derived from the valid IM for $\theta$ defined by \eqref{eq:plaus.consonance} is also valid for all marginal assertions $B$ about $\phi$.  
\end{cor0}

The style of marginalization described above is more familiar than it might seem.  Given a confidence region $C_\alpha(x)$ for $\theta$, the natural way to construct a corresponding confidence region for $\phi=\phi(\theta)$ is to just look at the image of $C_\alpha(x)$ under the mapping $\phi$.  That is, the corresponding confidence region for $\phi$ is 
\[ C_\alpha^\phi(x) = \{\varphi: \phi(\vartheta) = \varphi \text{ for some } \vartheta \in C_\alpha(x)\}. \]
It's not difficult to see that, if the marginal plausibility contour is 
\[ \pi_x^\phi(\varphi) = \sup_{\vartheta: \phi(\vartheta) = \varphi} \pi_x(\vartheta), \quad \varphi \in \Phi, \]
then the collection of all sufficiently plausible values of $\phi$ satisfies 
\[ \{\varphi \in \Phi: \pi_x^\phi(\varphi) > \alpha\} = C_\alpha^\phi(x). \]
Therefore, marginalization via the formal and mathematically rigorous consonant plausibility calculus agrees with the intuitive marginalization strategy one adopts when manipulating confidence regions---and it preserves validity of the original IM for $\theta$.  

For a quick illustration, reconsider the independent normal means example as in Section~\ref{SS:isnt.valid} above, with $X_1 \sim \nm(\theta_1, 1)$ and $X_2 \sim \nm(\theta_2, 1)$, leading up to the Fieller--Creasy problem.  The joint confidence regions for $\theta=(\theta_1,\theta_2)$ are disks centered at $x=(x_1,x_2)$, and the underlying normality implies that the corresponding plausibility contour for $\theta$ is 
\[ \pi_x(\vartheta) = 1 - G_2(\|\vartheta-x\|_2^2), \quad \vartheta = (\vartheta_1, \vartheta_2) \in \RR^2, \]
where $G_d$ is the $\chisq(d)$ distribution function with $d$ degrees of freedom.  To check that this is indeed the correct plausibility contour, note that $\{\vartheta: \pi_x(\vartheta) > \alpha\}$ is the usual confidence disk in $\RR^2$.  First, suppose that $\phi=\theta_1$ is a quantity of interest, then the marginal plausibility contour is
\[ \pi_x^\phi(\varphi) = \sup_{\vartheta_2 \in \RR} \pi_x\bigl( (\varphi, \vartheta_2) \bigr) = \pi_x\bigl( (\varphi, x_2) \bigr), \quad \varphi \in \RR. \]
A plot of this marginal plausibility contour is shown in Figure~\ref{fig:fc2}.  If it were known in advance that $\theta_2$ were totally irrelevant and only assertions about $\theta_1$ would be considered, then there is a more direct and efficient marginalization strategy available.  That is, if I just ignored $X_2$ and $\theta_2$ entirely, and just constructed a (marginal) plausibility contour for $\phi=\theta_1$ based on $X_1$ alone.  In that case, 
\[ \pi_x^\phi(\varphi) = 1 - | 2 F_\varphi(x_1) - 1 |, \quad \varphi \in \RR, \]
where $F_\varphi$ is the $\nm(\varphi, 1)$ distribution function.  This is also plotted in Figure~\ref{fig:fc2}.  Both lead to valid inference on $\phi=\theta_1$, but the latter having tighter plausibility contours compared to the former is a reflection of the efficiency gains that are possible due to direct marginalization.  This is not an instance of a free lunch---the gain of efficiency comes at the cost of not being able to make inference on quantities related to $\theta_2$.  

\begin{figure}[t]
\begin{center}
\scalebox{0.70}{\includegraphics{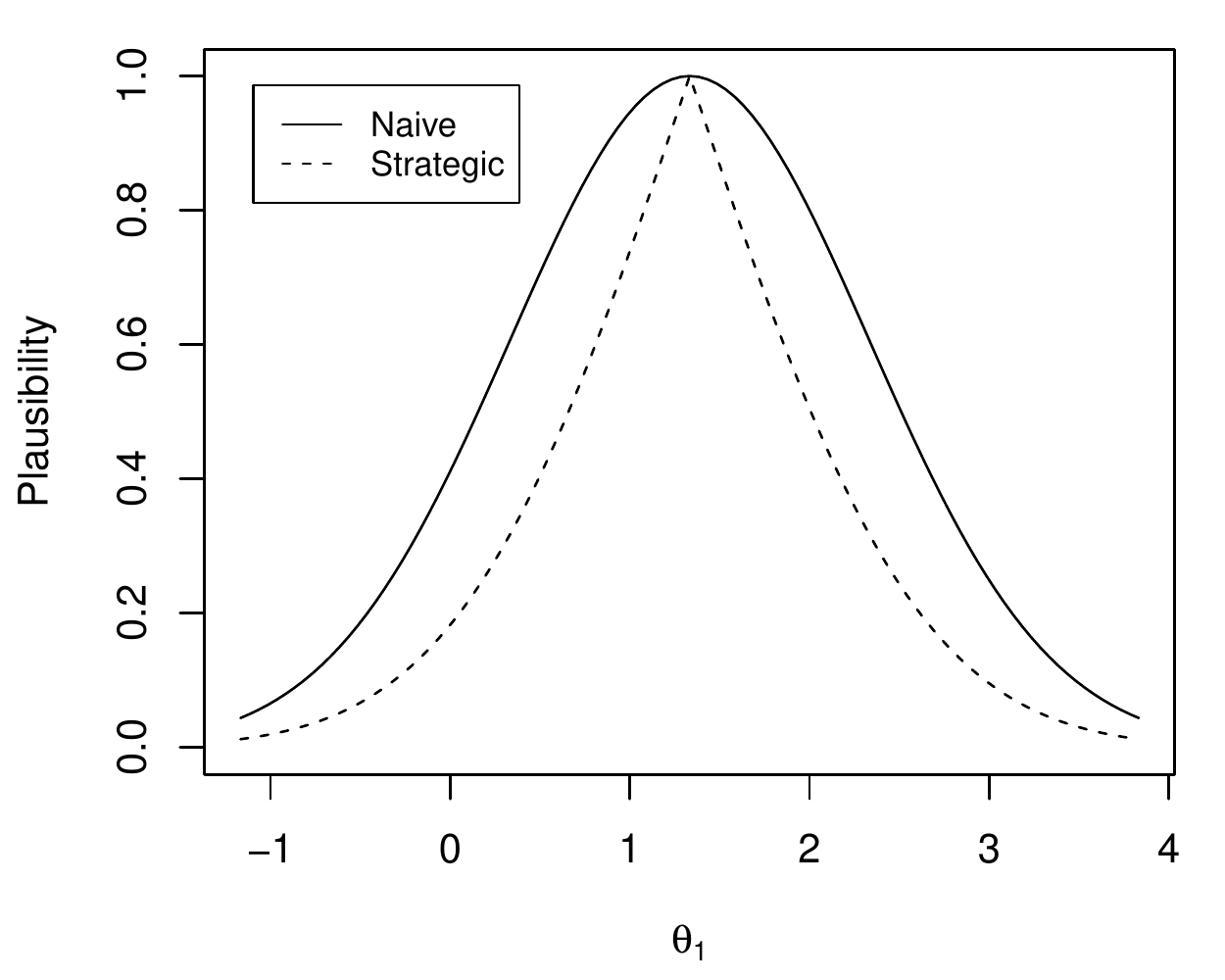}}
\end{center}
\caption{Plots of two marginal plausibility contours for $\phi=\theta_1$ in the normal means model with data $X=(1.333, 0.333)$. The ``naive'' approach (solid) is based on marginalizing the joint plausibility contour using consonance, while the ``strategic'' approach (dashed) is that based on ignoring $\theta_2$ from the start.}
\label{fig:fc2}
\end{figure} 

Second, for the Fieller--Creasy problem, the interest parameter is  $\phi=\theta_1/\theta_2$, and the corresponding marginal plausibility contour is 
\[ \pi_x^\phi(\varphi) = \sup_{z \in \RR} \pi_x\bigl( (\varphi z, z) \bigr), \]
and it's not too difficult to show that $\pi_x^\phi(\varphi) = \pi_x(\vartheta^{x,\varphi})$, where 
\[ \vartheta^{x,\varphi} = (1 + \varphi^2)^{-1} \bigl(\varphi^2 x_1 + \varphi x_2 \, , \, \varphi x_1 + x_2 \bigr)^\top. \]
The first question is about validity, and here I reproduce the brief simulation summarized in Figure~\ref{fig:fc}.  That is, I evaluate the lower probability $\uPi_x^\phi(B)$ assigned to the assertion $B=(-\infty,9]$ using the consonant plausibility calculus.  Then I evaluate the distribution function of $\lPi_X^\phi(B)$, as a function of $X \sim \prob_{X|\theta}$, in the case where $\theta=(1,0.1)$, which makes the assertion false.  As expected from the validity result, this distribution function is above the diagonal line in Figure~\ref{fig:fc3}(a), whereas the analogous distribution function in \eqref{eq:cd.cdf.2}, for the additive $\Pi_X^\phi$, is well below the diagonal line.  The second question concerns efficiency, and a plot of the marginal plausibility contour for $\phi$ is shown in Figure~\ref{fig:fc3}(b).  Alternatively, it is possible to derive a confidence region for $\phi$ by directly marginalizing first before constructing a confidence distribution.  The plausibility contour based on the marginal confidence distribution derived in \citet{schweder.hjort.cd.discuss} is given by 
\[ \pi_x^\phi(\varphi) = 1 - G_1\bigl\{ (1+\varphi^2)^{-1}(x_1 - \varphi x_2)^2 \bigr\}, 
\quad \varphi \in \RR. \]
The same marginal plausibility contour was derived in \citet[][Sec.~4.2]{immarg} using a different argument.  A plot of this contour is also displayed in Figure~\ref{fig:fc3}(b) and, since I used the same data as in \citet{schweder.hjort.cd.discuss}, in particular, $X=(1.333, 0.333)$, my plot is the same as in their Figure~1.\footnote{Schweder and Hjort (and others) often plot a ``confidence curve'' which is 1 minus my plausibility contour.  So their plot is actually a reflection of mine about the horizontal line at 0.5.} Here, again, note the gain in efficiency between doing the naive marginalization of the joint plausibility contour compared to marginalizing more strategically to $\phi$ from the start.  

\begin{figure}[t]
\begin{center}
\subfigure[Distribution functions of $\Pi_X^\phi(B)$ and $\lPi_X^\phi(B)$]{\scalebox{0.60}{\includegraphics{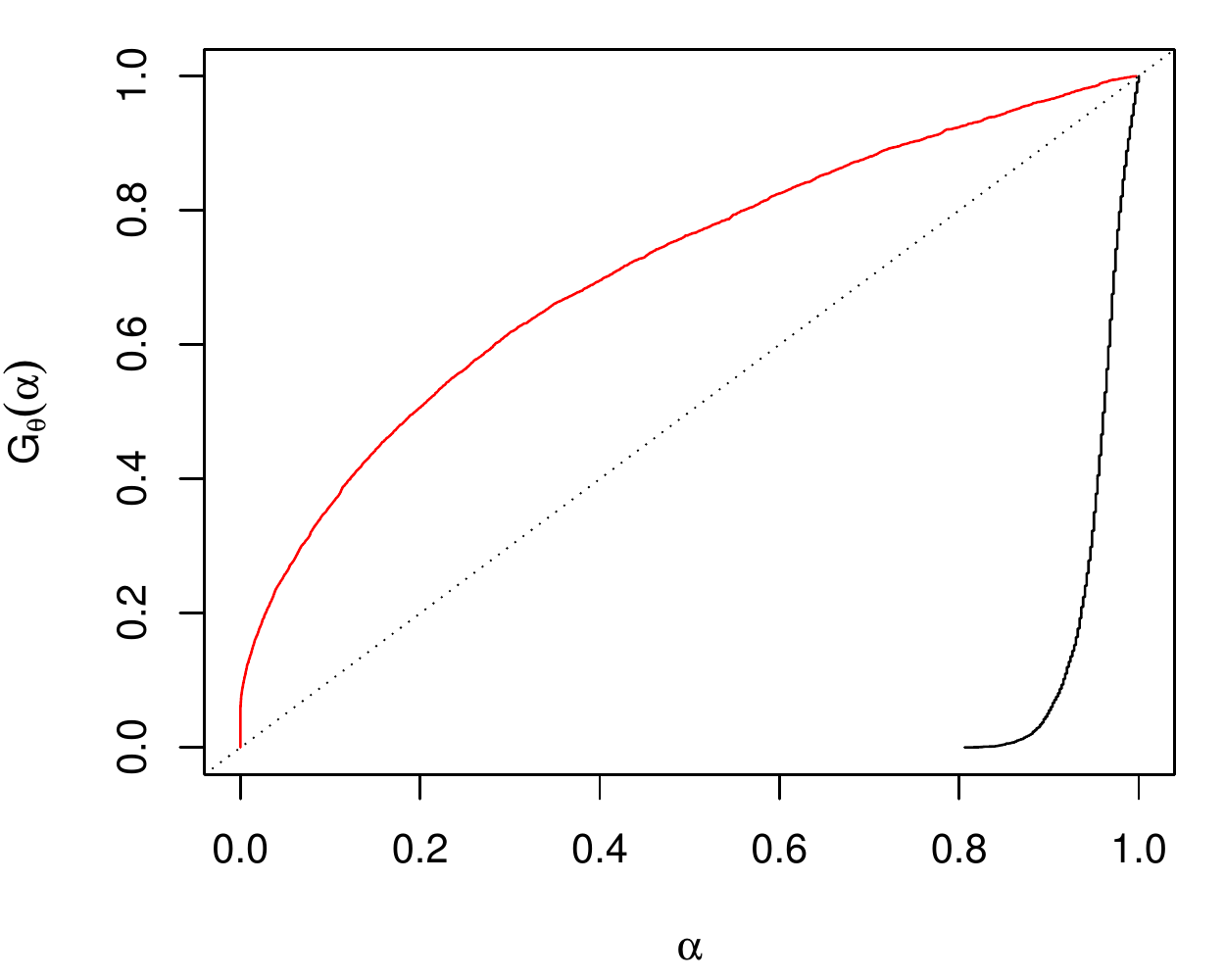}}}
\subfigure[Marginal plausibility contours]{\scalebox{0.60}{\includegraphics{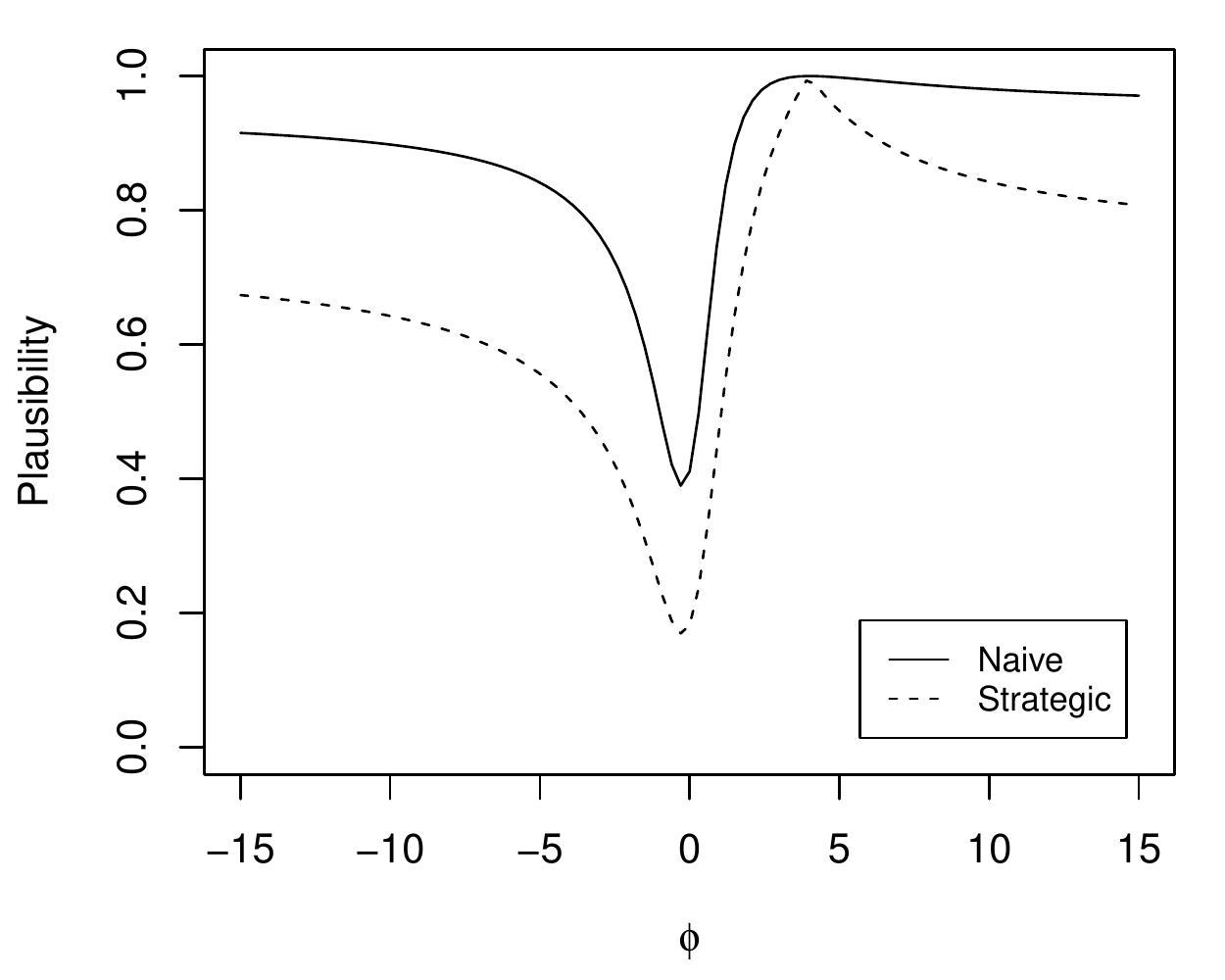}}}
\end{center}
\caption{Panel~(a): Distribution functions of $\Pi_X^\phi(B)$ (black) and of $\lPi_X^\phi(B)$ (red) compared to $\unif(0,1)$.  Panel~(b): Plots of two marginal plausibility contours for $\phi=\theta_1/\theta_2$ in the normal means model with data $X=(1.333, 0.333)$.  The ``naive'' approach (solid) is based on marginalizing the joint plausibility contour using consonance, while the ``strategic'' approach (dashed) is based on direct marginalization to $\phi$ from the start.}
\label{fig:fc3}
\end{figure} 

To summarize, while an interpretation of confidence in terms of probability is tempting, this generally doesn't work.  That is, in order for a confidence distribution---or any other data-dependent probability distribution for that matter---to be valid, the class of assertions needs to be severely restricted, perhaps to the point that the distribution is no more informative than the family of confidence regions one started with.  However, it's straightforward and in many ways intuitive to, instead, interpret confidence in terms of imprecise probability, namely, as a consonant plausibility.  This has (at least) the following three practical benefits:
\begin{itemize}
\item from the point of view of plausibility theory, confidence regions (and p-values) are easy to interpret, and that interpretation naturally aligns with how practitioners already interpret them; 
\vspace{-2mm}
\item the coverage probability property \eqref{eq:coverage} together with consonant plausibility construction easily and directly leads to validity with no restrictions on the class $\A$ of assertions; and 
\vspace{-2mm}
\item the mathematically rigorous plausibility theory comes with its own calculus for manipulating confidence-as-plausibility, e.g., for marginalization, and following those formal rules preserves the validity property in the previous bullet.  
\end{itemize} 
Finally, it's important to emphasize that the imprecise probability perspective presented here is not simply an embellishment on top of confidence regions or p-values.  It's possible to construct valid IMs directly, without already having a confidence region in hand.  I'll discuss this construction next in Section~\ref{S:source}.  This machinery for constructing valid IMs will lead to an even stronger characterization of frequentest procedures in terms of imprecise probabilities in Section~\ref{S:main}, with important practical implications.  

\section{Direct construction of valid IMs}
\label{S:source}

\subsection{Using one random set}
\label{SS:one.S}

Towards a stronger characterization of frequentist procedures in terms of valid IMs, here I consider the question of how to directly construct a valid IM having the plausibility measure structure developed in the previous section, where by ``directly'' I mean without starting from a given frequentist procedure.  To my knowledge, the only direct construction is that in \citet{imbasics, imbook}, described below.  The chief novelty of this approach is the user-specified random set designed to predict or guess the unobserved value of an auxiliary variable.  Here is Martin and Liu's three-step construction. 

\begin{astep}
Associate data $X$ and parameter $\theta$ with an auxiliary variable $U$, taking values in $\UU$, with \emph{known} distribution $\prob_U$.  In particular, let 
\begin{equation}
\label{eq:basic.assoc}
X = a(\theta, U), \quad U \sim \prob_U. 
\end{equation}
This is just a mathematical description of an algorithm for simulating from $\prob_{X|\theta}$, but it's not necessary to assume that data is actually generated according to this process.  The inferential role played by the association is to shift primary focus from $\theta$ to $U$.  To see this, define the sets 
\begin{equation}
\label{eq:basic.focal}
\Theta_x(u) = \{\theta: x = a(\theta,u)\}, \quad u \in \UU, \quad x \in \XX. 
\end{equation} 
Given data $X=x$, if an {\em oracle} tells me a value $u^\star$ that satisfies $x=a(\theta,u^\star)$, then my inference is $\theta \in \Theta_x(u^\star)$, the ``strongest possible'' conclusion.  
\end{astep}

\begin{pstep}
Predict the unobserved value of $U$ in \eqref{eq:basic.assoc} with a (nested) random set $\S$.  This step is motivated by the shift of focus from $\theta$ to $U$, and is the feature that distinguishes the IM framework from Fisher's fiducial and the extensions developed by \citet{dempster1966, dempster1967, dempster1968b, dempster1968a, dempster.copss}.  The distribution $\prob_\S$ of $\S$ is to be chosen by the data analyst, subject to certain conditions; see Theorem~\ref{thm:valid} below. 
\end{pstep}

\begin{cstep}
Combine the association map in \eqref{eq:basic.focal} and the observed data $X=x$ with the random set $\S$ to obtain a new random set
\begin{equation}
\label{eq:post.focal}
\Theta_x(\S) = \bigcup_{u \in \S} \Theta_x(u). 
\end{equation}
Note that $\Theta_x(\S)$ contains the true $\theta$ if (and often only if) $\S$ likewise contains the unobserved value $u^\star$ of $U$.  The IM output is the distribution of $\Theta_x(\S)$, which, if $\Theta_x(\S)$ is non-empty with $\prob_\S$-probability~1 for each $x$, can be summarized by a plausibility measure
\[ \uPi_x(A) = \prob_\S\{\Theta_x(\S) \cap A \neq \varnothing\}, \quad A \subseteq \Theta. \]
\end{cstep}

It turns out that the validity property \eqref{eq:valid} is determined by properties of the random set $\S$.  The following result \citep{imbasics} shows that validity holds for a wide class of random sets $\S$ introduced in the P-step.  Let $(\UU,\U)$ be the measurable space on which $\prob_U$ is defined, and assume that $\U$ contains all closed subsets of $\UU$.  

\begin{thm}
\label{thm:valid}
Suppose that the predictive random set $\S$, supported on $\SS$, with distribution $\prob_\S$, satisfies the following:
\begin{itemize}
\item[\rm P1.] The support $\SS \subset 2^\UU$ contains $\varnothing$ and $\UU$, and: \\
{\rm (a)} is closed, i.e., each $S \in \SS$ is closed and, hence, in $\U$, and \\
{\rm (b)} is nested, i.e., for any $S,S' \in \SS$, either $S \subseteq S'$ or $S' \subseteq S$.
\item[\rm P2.] The distribution $\prob_\S$ satisfies $\prob_\S\{\S \subseteq K\} = \sup_{S \in \SS: S \subseteq K} \prob_U(S)$, for each $K \subseteq \UU$.  
\end{itemize}
In addition, if $\Theta_x(\S)$ is non-empty with $\prob_\S$-probability~1 for all $x$, then the IM is valid in the sense of Definition~\ref{def:valid}.  
\end{thm}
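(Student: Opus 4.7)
The plan is to reduce the validity property to a one-line calibration inequality for the predictive random set $\S$. The pivotal observation is that, by the association \eqref{eq:basic.assoc}, the data can be written as $X = a(\theta, U^\star)$ with $U^\star \sim \prob_U$, and by the definition \eqref{eq:basic.focal} of the focal sets, the true parameter satisfies $\theta \in \Theta_X(U^\star)$. Consequently $\theta \in \Theta_X(\S)$ whenever $\S \ni U^\star$, a deterministic inclusion that will drive the whole argument.

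First, I fix $A \in \A$ and $\theta \in A$, and I would read off the inclusion $\{U^\star \in \S\} \subseteq \{\theta \in \Theta_X(\S)\} \subseteq \{\Theta_X(\S) \cap A \neq \varnothing\}$. Taking $\prob_\S$-probability on each side, with $U^\star$ held fixed, gives the pointwise lower bound $\uPi_X(A) \geq f_\S(U^\star)$, where $f_\S(u) := \prob_\S\{\S \ni u\}$ is the plausibility contour of $\S$. Since $X$ has the same law as $a(\theta,U^\star)$ under $\prob_{X|\theta}$, this produces
\begin{equation*}
\prob_{X|\theta}\{\uPi_X(A) \leq \alpha\} \leq \prob_U\{f_\S(U^\star) \leq \alpha\},
\end{equation*}
in which the right-hand side no longer depends on $A$ or $\theta$. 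Validity in the sense of Definition~\ref{def:valid} therefore collapses to the single contour-calibration inequality $\prob_U\{f_\S(U) \leq \alpha\} \leq \alpha$ for every $\alpha \in [0,1]$.

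Second, I would establish that inequality from P1 and P2. The idea is to index the nested chain $\SS$ by its $\prob_U$-measure, writing $\SS = \{S_t : t \in \TT\}$ with $s \leq t \Rightarrow S_s \subseteq S_t$ and $\prob_U(S_t) \leq t$; applying P2 to $K = S_t$ then couples $\S$ with $S_T$ for $T \sim \unif(0,1)$. Defining $\tau(u) := \inf\{t : u \in S_t\}$, the closedness of members of $\SS$ (P1(a)) together with nestedness should yield $f_\S(u) \leq 1 - \tau(u)$, while the measure indexing gives $\prob_U\{\tau(U) \leq t\} \leq t$; the two bounds combine to produce the calibration inequality and, chained with the display above, establish \eqref{eq:valid}.

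The main obstacle I anticipate is technical rather than conceptual: parameterizing $\SS$ by $\prob_U$-measure is transparent when $\prob_U$ is atomless and $\SS$ is a maximal chain, but in general one must work directly from P2 applied to $K = \UU \setminus \{u\}$, which expresses the contour as $f_\S(u) = 1 - \sup\{\prob_U(S) : S \in \SS,\, u \notin S\}$, and then handle the supremum through a monotone approximation along a countable cofinal sub-chain (available since $\SS$ is a nested family of closed sets). Once the contour-calibration step is in hand, the two ingredients assemble directly into the claimed validity, with no additional regularity beyond the assumed non-emptiness of $\Theta_x(\S)$ almost surely, which is needed only to guarantee that $\uPi_x(A)$ is a well-defined capacity to begin with.
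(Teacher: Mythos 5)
Your overall plan coincides with the standard argument (the paper does not reprove this theorem; it quotes it from \citet{imbasics}, whose proof has exactly your two-stage shape). Stage one of your proposal is correct and complete: since $\theta\in\Theta_X(U^\star)$ deterministically, $\uPi_X(A)\ge f_\S(U^\star)$ whenever $\theta\in A$, so validity collapses to the contour-calibration inequality $\prob_U\{f_\S(U)\le\alpha\}\le\alpha$; your remark that non-emptiness of $\Theta_x(\S)$ is only needed so the output is a bona fide (normalized) plausibility function is also right.

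Stage two has a genuine problem as written. With your indexing convention $\prob_U(S_t)\le t$ and $\tau(u)=\inf\{t:u\in S_t\}$, the inequality that actually follows is $f_\S(u)\ge 1-\tau(u)$, not $f_\S(u)\le 1-\tau(u)$ (the latter holds under the opposite convention $\prob_U(S_t)\ge t$); moreover the bound $\prob_U\{\tau(U)\le t\}\le t$ points the wrong way: chaining your two stated inequalities bounds $\prob_U\{f_\S(U)\le\alpha\}$ from \emph{below} by $\alpha$, not from above. What the $\tau$-route needs is $f_\S(u)\ge 1-\tau(u)$ together with $\prob_U\{\tau(U)<t\}\ge t$, and the latter forces the chain's $\prob_U$-measures to match the index exactly --- precisely the rich-chain, atomless case you set aside; for sparse supports (e.g.\ $\SS=\{\varnothing,\UU\}$, where $f_\S\equiv 1$) no such indexing exists, yet the inequality still holds. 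Your fallback via $f_\S(u)=1-\sup\{\prob_U(S):S\in\SS,\,u\notin S\}$ is the right move in general, but the claim that a countable cofinal sub-chain exists ``since $\SS$ is a nested family of closed sets'' does not follow from nestedness and closedness alone; what makes it work is regularity of $(\UU,\prob_U)$, e.g.\ $\UU$ second countable so that $\prob_U$ is $\tau$-additive on increasing unions of open sets --- implicit in the paper's Euclidean auxiliary spaces but worth stating. The clean finish: fix $\eps>0$; if $f_\S(u)\le\alpha$, then P2 applied to $K=\UU\setminus\{u\}$ gives some $S\in\SS$ with $u\in S^c$ and $\prob_U(S)>1-\alpha-\eps$, so $\{u:f_\S(u)\le\alpha\}$ is contained in the union of the nested open sets $\{S^c:S\in\SS,\ \prob_U(S)>1-\alpha-\eps\}$, whose $\prob_U$-measure is at most $\alpha+\eps$ by $\tau$-additivity (equivalently, by extracting a countable increasing subfamily covering the union via Lindel\"of); letting $\eps\downarrow 0$ yields the calibration inequality, and with your stage one this proves the theorem.
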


Constructing a random set $\S$ that satisfies P1--P2 is relatively easy; see Corollary~1 in \citet{imbasics}.  Note that the nested support property P1(b) is what makes the plausibility measure consonant, which was fundamental to the discussion in Section~\ref{SS:is.valid}.  The non-emptiness condition, i.e., $\Theta_x(\S) \neq \varnothing$ with $\prob_\S$-probability~1, is easy to arrange.  Often some initial refinements are needed, e.g., auxiliary variable dimension reduction techniques \citep{imcond, immarg} and/or random set stretching \citep{leafliu2012}.  The necessary details for the present setting will be given in Section~\ref{S:main}.

\subsection{Using a family of random sets}
\label{SS:family.S}

It's not necessary to limit oneself to a single random set $\S \sim \prob_\S$ like in the previous subsection.  For additional flexibility, it may be advantageous to consider a family of random sets and, here, I'll consider a family indexed by the parameter space $\Theta$.  That is, consider a fixed auxiliary variable space $\UU$ and a family of supports $\{\SS(\vartheta): \vartheta \in \Theta\}$, each consisting of subsets of $\UU$ satisfying Condition~P1 of Theorem~\ref{thm:valid}.  Then, on each support $\SS(\vartheta)$, define a distribution $\prob_{\S|\vartheta}$ to satisfy Condition~P2.   

Given an association $X=a(\theta, U)$, for $U \sim \prob_U$, where $\prob_U$ is supported on $\UU$, each of the random sets $\S \sim \prob_{\S|\vartheta}$ described above, for $\vartheta \in \Theta$, defines an IM with capacity 
\[ \uPi_x(A \mid \vartheta) = \prob_{\S|\vartheta}\{\Theta_x(\S) \cap A \neq \varnothing\}, \quad A \subseteq \Theta, \quad \vartheta \in \Theta, \]
whose corresponding plausibility contour is 
\[ \pi_x(t \mid \vartheta) = \prob_{\S|\vartheta}\{\Theta_x(\S) \ni t\}, \quad t \in \Theta, \]
which, by definition, is such that each $\uPi_x(\cdot \mid \vartheta)$ satisfies 
\[ \uPi_x(A \mid \vartheta) = \sup_{t \in A} \pi_x(t \mid \vartheta), \quad A \subseteq \Theta. \]
I propose to {\em fuse} the $\vartheta$-specific plausibility measures together at their contours, i.e., 
\[ \pif_x(\vartheta) := \pi_x(\vartheta \mid \vartheta), \quad \vartheta \in \Theta. \]
According to \citet{shafer1976, shafer1987}, \citet[][Sec.~7.8]{lower.previsions.book}, etc., if
\begin{equation}
\label{eq:max}
\sup_{\vartheta \in \Theta} \pif_x(\vartheta) = 1. 
\end{equation}
then this can define a genuine plausibility measure via 
\begin{equation}
\label{eq:fuse}
\uPif_x(A) = \sup_{\vartheta \in A} \pif_x(\vartheta), \quad A \subseteq \Theta. 
\end{equation}
Fused plausibility measures has been used informally---without the adjective ``fused''---in applications of the so-called {\em local conditional IMs} \citep[e.g.,][]{imcond, imvch, imunif, martin.syring.isipta19}, which were designed for valid and efficient inference in some challenging non-regular problems.  

Moreover, validity of the IM corresponding to this fused plausibility function follows immediately from that for the individual $\vartheta$-specific IMs implied by Theorem~\ref{thm:valid}.  

\begin{thm}
\label{thm:valid.fuse}
If the $\vartheta$-specific IM is valid for each $\vartheta \in \Theta$, and \eqref{eq:max} holds, then the IM with fused plausibility measure \eqref{eq:fuse} is valid too.  
\end{thm}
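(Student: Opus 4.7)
The plan is to exploit the consonance built into the fused plausibility measure $\uPif_x$ to reduce the verification of validity for arbitrary assertions $A$ to a pointwise check on the contour $\pif_x$, and then feed the resulting pointwise claim into the hypothesized $\vartheta$-specific validity applied at $\vartheta = \theta$ with the singleton $A = \{\theta\}$. The role of condition \eqref{eq:max} is just to guarantee that $\uPif_x$ in \eqref{eq:fuse} is a bona fide capacity (in particular, $\uPif_x(\Theta) = 1$), so nothing beyond that is needed from that assumption.

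First, I would carry out the reduction. Fix an assertion $A \subseteq \Theta$ and a level $\alpha \in [0,1]$. By the defining formula \eqref{eq:fuse}, for every $\theta \in A$ one has $\pif_X(\theta) \leq \sup_{\vartheta \in A} \pif_X(\vartheta) = \uPif_X(A)$, so the event $\{\uPif_X(A) \leq \alpha\}$ is contained in $\{\pif_X(\theta) \leq \alpha\}$. Consequently,
\[
\sup_{\theta \in A} \prob_{X|\theta}\{\uPif_X(A) \leq \alpha\} \;\leq\; \sup_{\theta \in A} \prob_{X|\theta}\{\pif_X(\theta) \leq \alpha\} \;\leq\; \sup_{\theta \in \Theta} \prob_{X|\theta}\{\pif_X(\theta) \leq \alpha\},
\]
so validity for $\uPif_x$ follows once one shows that the rightmost quantity is at most $\alpha$ for every $\alpha$.

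Second, I would establish that pointwise contour bound by unpacking the definition $\pif_X(\theta) = \pi_X(\theta \mid \theta)$ and invoking the hypothesized validity of the $\vartheta$-specific IM at the particular choice $\vartheta = \theta$ and with the singleton assertion $A = \{\theta\}$. Since the $\theta$-specific plausibility evaluated at the singleton is exactly the contour value, $\uPi_X(\{\theta\} \mid \theta) = \pi_X(\theta \mid \theta) = \pif_X(\theta)$, Definition~\ref{def:valid} applied to the $\theta$-specific IM gives $\prob_{X|\theta}\{\pif_X(\theta) \leq \alpha\} \leq \alpha$ for every $\theta$ and $\alpha$, closing the argument.

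Since each $\vartheta$-specific IM in this setup is constructed via Theorem~\ref{thm:valid}, its assertion collection is automatically $2^\Theta$, so singletons are admissible and no measurability caveat arises. The only place I would pause to double-check is the consonance step, but that is immediate from the definition \eqref{eq:fuse}; the argument is otherwise a short bookkeeping exercise rather than a place where a real obstacle lives. In that sense, the proposition is really a one-line consequence of combining consonance with the singleton-case validity that the $\vartheta$-specific IMs already supply.
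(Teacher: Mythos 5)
Your proposal is correct and follows essentially the same route as the paper's proof: consonance of $\uPif_x$ reduces the event $\{\theta \in A,\ \uPif_X(A) \leq \alpha\}$ to the contour event $\{\pif_X(\theta) \leq \alpha\}$, which is then controlled by the $\theta$-specific IM's validity applied at the singleton $\{\theta\}$. Your explicit remarks about the singleton assertion and the purely normalizing role of \eqref{eq:max} only make explicit what the paper leaves implicit.
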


\begin{proof}
The essential observation is that, 
\[ \{\text{$\theta \in A$ and $\uPif_X(A) \leq \alpha$}\} \implies \pi_X(\theta \mid \theta) \leq \alpha. \]
Since the $\vartheta$-specific IM is valid for $\vartheta=\theta$, the right-most event above has $\prob_{X|\theta}$-probability no more than $\alpha$.  From this, validity of the IM with fused plausibility follows:
\[ \sup_{\theta \in A} \prob_{X|\theta}\{\uPif_X(A) \leq \alpha\} \leq \sup_{\theta \in A} \prob_{X|\theta}\{ \pi_X(\theta \mid \theta) \leq \alpha\} \leq \alpha. \qedhere \]
\end{proof}

As long as \eqref{eq:max} holds, the quantity defined in \eqref{eq:fuse} is a legitimate plausibility measure, so its mathematical properties are known.  Furthermore, as the previous theorem established, inference based on the fused IM would be valid, so there is no immediate concerns about its use in a statistical problem.  However, the fusion operation is unfamiliar, so its interpretation is less clear.  But there are insights available in the imprecise probability literature, and I'll give some more details about this in Section~\ref{SS:open.fused}.

\section{Main results}
\label{S:main}

\subsection{Characterization of confidence regions}

Recall the sampling model $X \sim \prob_{X|\theta}$ with unknown parameter $\theta \in \Theta$.  Suppose that the parameter of interest is $\phi = \phi(\theta)$, possibly vector-valued, and let $\Phi = \phi(\Theta)$ be its range.  Let $C_\alpha: \XX \to 2^\Phi$ be the rule that determines, for any specified level $\alpha \in [0,1]$, based on data $X$, a $100(1-\alpha)$\% confidence region $C_\alpha(X)$ for $\phi=\phi(\theta)$.  Its defining property is that $C_\alpha(X)$, a random set as a function of $X \sim \prob_{X|\theta}$, satisfies the coverage probability condition \eqref{eq:coverage}.  I'll also assume that the confidence regions are nested, satisfying \eqref{eq:nested.C}.  

Next, take an association, $X=a(\theta, U)$, with $U \sim \prob_U$, like in \eqref{eq:basic.assoc}, consistent with the posited model $\prob_{X|\theta}$.  Note that $X$ here represents the quantity that the confidence region $C_\alpha$ depends on, which may not be the full data.  Indeed, confidence regions often depend only on a minimal sufficient statistic, in which case $X$ represents that statistic and the association need only describe the sampling distribution of that statistic.  

Technically, the distribution $\prob_U$ of the auxiliary variable $U$ is defined on a measurable space $(\UU, \U)$, and I will take $\U$ to be the Borel $\sigma$-algebra relative to the topology ${\cal T}$ on $\UU$, so that $\U$ contains all the closed subsets of $\UU$.  Given this association, define the collection of subsets $\Theta_x(u) = \{\vartheta: x=a(\vartheta, u)\}$ as in \eqref{eq:basic.focal}, and the new collection 
\begin{equation}
\label{eq:S.alpha}
S_\alpha(\vartheta) = \text{clos}(\{u \in \UU: C_\alpha(a(\vartheta, u)) \ni \phi(\vartheta)\}), \quad (\alpha, \vartheta) \in [0,1] \times \Theta, 
\end{equation}
where $\text{clos}(B)$ denotes the closure of $B \subseteq \UU$ with respect to the topology ${\cal T}$.  Following \citet{shafer1976}, I'll refer to these sets $S_\alpha(\vartheta)$ as {\em focal elements}.  There are certain cases in which the focal elements don't depend on $\vartheta$; see Section~\ref{SSS:fusion}. 

Next, consider the following {\em compatibility condition} that links the given confidence region and the selected association:
\begin{equation}
\label{eq:compatible}
\bigcup_{u \in S_\alpha(\vartheta)} \Theta_x(u) \neq \varnothing \quad \text{for all $(x, \vartheta, \alpha) \in \XX \times \Theta \times [0,1]$}. 
\end{equation}
The role played by the compatibility condition is to ensure that the family of random sets constructed in the proof is such that \eqref{eq:max} holds.  In most problems, it is possible to arrange the association such that $\Theta_x(u)$ is non-empty for all $u$, especially when $X$ represents a minimal sufficient statistic of the same dimension as $\theta$, hence compatibility \eqref{eq:compatible} is automatic.  The only exceptions to compatibility that I'm aware of are cases that involve non-trivial constraints on the parameter space, but I postpone the detailed discussion of this compatibility condition to Section~\ref{SS:conditions}.  

Theorem~\ref{thm:complete} below characterizes confidence regions in terms of a valid (fused) IM.  Remember that the starting point is a family of confidence regions for the interest parameter $\phi=\phi(\theta)$, not necessarily for $\theta$ itself.  There generally is no association that directly links the data with $\phi$ so the challenge is constructing an IM on the full parameter space $\Theta$ in such a way that it marginalizes appropriately to the $\phi=\phi(\theta)$ space. 

\begin{thm}
\label{thm:complete}
Let $C_\alpha$ be a family of confidence regions for $\phi=\phi(\theta)$ that satisfies the coverage property \eqref{eq:coverage} and is nested in the sense of \eqref{eq:nested.C}.  Suppose that the sampling model $\prob_{X|\theta}$ admits an association \eqref{eq:basic.assoc} that is compatible with the family of confidence regions in the sense of \eqref{eq:compatible}.  Then there exists a valid IM for $\theta$ with (fused) plausibility measure $\uPif_x$ such that the marginal plausibility regions for $\phi$, 
\begin{equation}
\label{eq:mpl.region}
C_\alpha^\star(x) = \Bigl\{\varphi \in \Phi: \sup_{\vartheta: \phi(\vartheta) = \varphi} \pif_x(\vartheta) > \alpha \Bigr\} 
\end{equation}
satisfy 
\begin{equation}
\label{eq:subset}
C_\alpha^\star(x) \subseteq C_\alpha(x) \quad \text{for all $(x,\alpha) \in \XX \times [0,1]$}. 
\end{equation}
Equality holds in \eqref{eq:subset} for a particular $\alpha$ if the coverage probability function 
\begin{equation}
\label{eq:cp.function}
\varphi \mapsto \inf_{\theta: \phi(\theta) = \varphi} \prob_{X|\theta}\{C_\alpha(X) \ni \varphi\} 
\end{equation}
is constant equal to $1-\alpha$.  
\end{thm}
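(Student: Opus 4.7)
My plan is to instantiate the family-of-random-sets construction from Section~\ref{SS:family.S}, using the focal elements $S_\alpha(\vartheta)$ of \eqref{eq:S.alpha} as the support. For each $\vartheta \in \Theta$, define a predictive random set $\S(\vartheta) = S_V(\vartheta)$ with $V \sim \unif(0,1)$. The first job is to verify the hypotheses of Theorem~\ref{thm:valid} for the resulting $\vartheta$-specific IM: nestedness of $\{S_\alpha(\vartheta): \alpha \in [0,1]\}$ is inherited from the nestedness \eqref{eq:nested.C} of $C_\alpha$; closedness is built into \eqref{eq:S.alpha}; and the uniform-$V$ choice handles P2 in the standard way. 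Translating the coverage condition \eqref{eq:coverage} through the association \eqref{eq:basic.assoc} yields $\prob_U\{S_\alpha(\vartheta)\} \geq 1-\alpha$, which drives the stochastic-dominance side of P2. The compatibility condition \eqref{eq:compatible} is precisely the non-emptiness hypothesis of Theorem~\ref{thm:valid}, giving validity of each $\vartheta$-specific IM.

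Next I would verify the normalization \eqref{eq:max} so that Theorem~\ref{thm:valid.fuse} can be invoked. The fused contour simplifies to $\pif_x(\vartheta) = \prob_V\{S_V(\vartheta) \cap U_x(\vartheta) \neq \varnothing\}$, with $U_x(\vartheta) := \{u : x = a(\vartheta, u)\}$. Using the non-emptiness of $\bigcap_\alpha C_\alpha(x)$ in \eqref{eq:nested.C}, pick $\hat\varphi \in \bigcap_\alpha C_\alpha(x)$; compatibility then lets me find some $\vartheta$ with $\phi(\vartheta) = \hat\varphi$ and $U_x(\vartheta) \neq \varnothing$. Every $u \in U_x(\vartheta)$ lies in $\{u' : C_\alpha(a(\vartheta, u')) \ni \phi(\vartheta)\}$ for all $\alpha$, so $\pif_x(\vartheta) = 1$. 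Theorem~\ref{thm:valid.fuse} then yields validity of the fused IM, proving the existence claim of the theorem.

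For the inclusion \eqref{eq:subset}, I would unwind the construction: if $\pif_x(\vartheta) > \alpha$ for some $\vartheta$ with $\phi(\vartheta) = \varphi$, then there exist $\alpha' > \alpha$ and $u^\star \in S_{\alpha'}(\vartheta)$ with $x = a(\vartheta, u^\star)$, which by \eqref{eq:S.alpha} forces $\varphi = \phi(\vartheta) \in C_{\alpha'}(x) \subseteq C_\alpha(x)$ by nesting. For the reverse inclusion under the constant-coverage hypothesis on \eqref{eq:cp.function}, exactness sharpens $\prob_U\{S_\alpha(\vartheta)\} \geq 1-\alpha$ to an equality and, combined with compatibility, lets me show $\pif_x(\vartheta) = \sup\{\alpha' : \phi(\vartheta) \in C_{\alpha'}(x)\}$ whenever $U_x(\vartheta) \neq \varnothing$. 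Exhibiting, for each $\varphi \in C_\alpha(x)$, some $\vartheta \in \phi^{-1}(\varphi)$ with $\pif_x(\vartheta) > \alpha$ is then a matter of invoking compatibility once more.

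The main technical obstacle is the topological closure in \eqref{eq:S.alpha}. Closure is needed to ensure P1(a) of Theorem~\ref{thm:valid}, but it introduces a subtle gap in the unwinding above: a $u^\star$ lying in the closure of $\{u : C_{\alpha'}(a(\vartheta, u)) \ni \phi(\vartheta)\}$ need not itself satisfy $C_{\alpha'}(a(\vartheta, u^\star)) \ni \phi(\vartheta)$. Closing this gap should either appeal to a continuity hypothesis on $u \mapsto a(\vartheta, u)$ paired with an upper-semicontinuity property of $x \mapsto C_\alpha(x)$, or use an $\eps$-slackening argument that replaces $\alpha'$ with $\alpha' - \eps$ and exploits nestedness to conclude $\varphi \in C_{\alpha' - \eps}(x) \subseteq C_\alpha(x)$. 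I expect this regularity to be absorbed into the mild conditions mentioned alongside the rough theorem of Section~\ref{S:intro}, with the rest of the set-theoretic bookkeeping being routine once the framework of Section~\ref{S:source} is in place.
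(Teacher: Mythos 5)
Your overall architecture matches the paper's: support the $\vartheta$-specific random sets on the focal elements \eqref{eq:S.alpha}, fuse the contours, and compare with $C_\alpha(x)$ through the index $\alpha(x,\vartheta)$ of \eqref{eq:alpha}. The genuine gap is in your P-step. Taking $\S(\vartheta)=S_V(\vartheta)$ with $V\sim\unif(0,1)$ does \emph{not} satisfy Condition~P2 of Theorem~\ref{thm:valid} in general: P2 forces $\prob_{\S|\vartheta}\{\S\subseteq S_\alpha(\vartheta)\}=\sup_{S\in\SS(\vartheta):S\subseteq S_\alpha(\vartheta)}\prob_U(S)=\prob_U\{S_\alpha(\vartheta)\}$, while the uniform-$V$ indexing gives (essentially) $1-\alpha$, and \eqref{eq:coverage} only guarantees $\prob_U\{S_\alpha(\vartheta)\}\geq 1-\alpha$, with strict inequality precisely in the conservative cases (Clopper--Pearson, DKW, split likelihood ratio) that motivate the theorem. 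Saying that coverage ``drives the stochastic-dominance side of P2'' conflates an inequality with the equality P2 demands, so the appeal to Theorem~\ref{thm:valid}, and hence to Theorem~\ref{thm:valid.fuse}, is unjustified as written. The paper instead \emph{defines} $\prob_{\S|\vartheta}$ by \eqref{eq:prob}, so P2 holds by construction; coverage then enters exactly once, yielding the key inequality $\pif_x(\vartheta)=1-\prob_{\S|\vartheta}\{\S\subseteq S_{\alpha(x,\vartheta)}(\vartheta)\}\leq\alpha(x,\vartheta)$, from which the chain $\pif_x(\vartheta)>\alpha\Rightarrow\alpha(x,\vartheta)>\alpha\Rightarrow S_\alpha(\vartheta)\cap\UU_x(\vartheta)\neq\varnothing\Rightarrow\varphi\in C_\alpha(x)$ gives \eqref{eq:subset}, and the equality clause is exactly the case $\pif_x(\vartheta)=\alpha(x,\vartheta)$ forced by constant coverage $1-\alpha$.

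A related consequence: with the uniform-$V$ choice the fused contour is identically $\alpha(x,\vartheta)$, i.e., you recover the confidence-as-plausibility IM of Section~\ref{SS:is.valid}, so $C_\alpha^\star(x)=C_\alpha(x)$ always. That would still literally yield \eqref{eq:subset} \emph{if} validity were established---and validity of that IM can be argued directly, since its contour dominates the one built from \eqref{eq:prob}---but you cannot get it from Theorem~\ref{thm:valid}, and the construction then carries none of the efficiency content (e.g., Blaker improving Clopper--Pearson in Section~\ref{SS:binomial}) that the inequality $\pif_x(\vartheta)\leq\alpha(x,\vartheta)$ is meant to encode; your equality-direction argument also becomes vacuous rather than conditional on \eqref{eq:cp.function}. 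The repair is simply to replace the uniform indexing by \eqref{eq:prob}. Two minor points: your normalization argument for \eqref{eq:max} is in the same spirit as the paper's (which deduces it from non-emptiness of $\Theta_x(\S)$ under \eqref{eq:compatible}), though compatibility by itself does not literally hand you a $\vartheta$ with $\phi(\vartheta)=\hat\varphi$ and $\UU_x(\vartheta)\neq\varnothing$; and your flag about the closure in \eqref{eq:S.alpha} is fair---the paper's own chain of equivalences treats $u\in S_\alpha(\vartheta)\cap\UU_x(\vartheta)$ as if $u$ satisfied the pre-closure inclusion, so the upper-semicontinuity or $\eps$-slackening argument you sketch would be needed there as well.
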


\begin{proof}
For the A-step of the IM construction, take any association $X = a(\theta, U)$, $U \sim \prob_U$, consistent with the above sampling distribution $\prob_{X|\theta}$ and compatible with the given confidence region in the sense of \eqref{eq:compatible}.  For the P-step, take a family of random sets $\S \sim \prob_{\S|\vartheta}$, indexed by $\vartheta \in \Theta$, with support 
\[ \SS(\vartheta) = \{S_\alpha(\vartheta): \alpha \in [0,1]\}, \]
closed and nested by the definition of $S_\alpha(\vartheta)$ in \eqref{eq:S.alpha}, and distribution $\prob_{\S|\vartheta}$ satisfying 
\begin{equation}
\label{eq:prob}
\prob_{\S|\vartheta}(\S \subseteq K) = \sup_{\alpha: S_\alpha(\vartheta) \subseteq K} \prob_U\{S_\alpha(\vartheta)\}. 
\end{equation}
That $S_\alpha(\vartheta)$ is $\prob_U$-measurable follows from the fact that $S_\alpha(\vartheta)$ is closed and the $\sigma$-algebra $\U$ contains all closed subsets of $\UU$.  Note, also, that $\Theta_x(\S)$ in \eqref{eq:post.focal} is non-empty with $\prob_{\S|\vartheta}$-probability~1 according to \eqref{eq:compatible}.  For the special singleton assertion $\{\vartheta\}$ about $\theta$, the C-step of the $\vartheta$-specific construction returns the plausibility contour 
\begin{equation}
\label{eq:point}
\pif_x(\vartheta) := \pi_x(\vartheta \mid \vartheta) = \prob_{\S|\vartheta}\{\Theta_x(\S) \ni \vartheta\} = \prob_{\S|\vartheta}\{\S \cap \UU_x(\vartheta) \neq \varnothing\}, 
\end{equation}
where $\UU_x(\vartheta)$ is defined as $\UU_x(\vartheta) = \{u: x=a(\vartheta, u)\}$. 
Note that non-emptiness of $\Theta_x(\S)$ with $\prob_{\S|\vartheta}$-probability 1 for each $\vartheta$ implies \eqref{eq:max}.  Therefore, this determines the fused plausibility contour \eqref{eq:fuse} and the corresponding IM with fused plausibility measure $\uPif_x$ is valid based on Theorem~\ref{thm:valid.fuse}.  To establish the desired connection between the plausibility region of this valid IM and the given confidence region, define the index 
\begin{equation}
\label{eq:alpha}
\alpha(x,\vartheta) = \sup\{\alpha \in [0,1]: S_\alpha(\vartheta) \cap \UU_x(\vartheta) \neq \varnothing\}, 
\end{equation}
so that 
\[ \pif_x(\vartheta) = 1-\prob_{\S|\vartheta}\{\S \subseteq S_{\alpha(x,\vartheta)}(\vartheta)\} \leq \alpha(x,\vartheta), \]
where the inequality is due to \eqref{eq:prob} and the coverage property \eqref{eq:coverage}.  Then the marginal plausibility region $C_\alpha^\star(x)$ defined in \eqref{eq:mpl.region} above satisfies:
\begin{align*}
\varphi \in C_\alpha^\star(x) \iff & \sup_{\vartheta: \phi(\vartheta) = \varphi} \pif_x(\vartheta) > \alpha \\
\iff & \pif_x(\vartheta) > \alpha &\text{for some $\vartheta$ with $\phi(\vartheta) = \varphi$} \\
\implies & \alpha(x,\vartheta) > \alpha & \text{for some $\vartheta$ with $\phi(\vartheta) = \varphi$} \\
\iff & S_\alpha(\vartheta) \cap \UU_x(\vartheta) \neq \varnothing & \text{for some $\vartheta$ with $\phi(\vartheta) = \varphi$} \\
\iff & \varphi \in C_\alpha(x). 
\end{align*}
Therefore, the valid IM constructed above has plausibility regions for $\phi=\phi(\theta)$ that are contained in the given confidence regions, completing the proof of the first claim.  

For the claim about when equality holds in \eqref{eq:subset}, note that the one-sided implication ``$\Longrightarrow$'' in the above display becomes two-sided if $\pif_x(\vartheta) = \alpha(x,\vartheta)$.  This equality holds if the coverage probability function \eqref{eq:cp.function} is constant equal to $1-\alpha$. 
\end{proof}


Theorem~\ref{thm:complete} has a number of interesting and important implications; here are two.  First, compared to the discussion in Section~\ref{SS:isnt.valid} which may have suggested that ``coverage + consonance'' leading to a plausibility measure was an optional embellishment on top of an already existing confidence region, Theorem~\ref{thm:complete} makes clear that the plausibility measure interpretation is an inherent part of the confidence region itself.  Indeed, this result is similar to the so-called {\em complete class theorems}
that often appear in the literature on decision theory \citep[e.g.,][Chap.~8]{berger1985}. That is, for any confidence region that attains the nominal coverage probability as in \eqref{eq:coverage}---and satisfies the other mild regularity conditions---there exists a valid IM whose (fused) plausibility measure returns a plausibility region that is no less efficient.  So, in addition to the benefits of the ``plausibility'' interpretation and the validity guarantees, there is no loss of efficiency in restricting oneself to those confidence regions derived from a valid IM.  Second, Theorem~\ref{thm:complete} goes beyond the simply identifying a plausibility measure that agrees with the given confidence region.  It also says that the plausibility measure in question has the form of those derived in Section~\ref{S:source}, i.e., based on (families of) random sets.  This reveals the fundamental importance of that construction and creates opportunities for developing new and improved methods; see Sections~\ref{re:algorithm} and \ref{S:algorithm}.

\subsection{Characterization of tests}

There is a well-known duality between confidence regions and tests, i.e., reject a null hypothesis if the hypothesized value is not included in the confidence region.  However, this strategy doesn't work especially well in practice for composite hypotheses, since it often would be difficult to determine if the confidence region and the hypothesized values have non-empty intersection.  So the development of hypothesis tests---especially for relevant composite hypotheses---is of fundamental importance.  

Recall that $\Theta_0$ is a specified proper subset of $\Theta$, and the goal is to test the hypothesis $H_0: \theta \in \Theta_0$.  Note that there is no need to introduce the interest parameter $\phi$ in this case, since a hypothesis about $\phi=\phi(\theta)$ is also a hypothesis about $\theta$.  Then a family of tests $\{T_\alpha: \alpha \in [0,1]\}$ is a collection of functions, $T_\alpha: \XX \to \{0,1\}$, with the interpretation that the hypothesis is rejected at level $\alpha$ based on data $X=x$ if and only if $T_\alpha(x)=1$.  It turns out that there is a parallel characterization of tests with provable size control, as in \eqref{eq:size}, and valid IMs whose output is a (fused) plausibility measure, similar to that in Theorem~\ref{thm:complete}. Roughly, under certain conditions, Theorem~\ref{thm:complete.test} below establishes that, for every family of tests satisfying \eqref{eq:size}, there exists a valid IM whose (fused) plausibility measure-based tests control Type~I error and are no less powerful.  

These conditions are analogous to those in the case of confidence regions. First, in addition to the validity condition \eqref{eq:size}, I will require that the test's rejection regions,
\[ R_\alpha = \{x \in \XX: T_\alpha(x) = 1\}, \quad \alpha \in [0,1], \]
be nested in the sense that $R_\alpha \subseteq R_{\alpha'}$ for all $\alpha \leq \alpha'$.  In other words, 
\begin{equation}
\label{eq:nested.test}
\text{if $\alpha \leq \alpha'$, then $T_\alpha(x)=1$ implies $T_{\alpha'}(x)=1$}. 
\end{equation}
Towards the second requirement, like in the context of confidence regions, define the focal elements $S_\alpha(\cdot)$ to support the to-be-defined random set as 
\begin{equation}
\label{eq:S.alpha.test}
S_\alpha(\vartheta) = \text{clos}(\{u \in \UU: T_\alpha(a(\vartheta,u)) = 0\}), \quad (\alpha, \vartheta) \in [0,1] \times \Theta_0. 
\end{equation}
Two points to note: first, that, the collection $\{S_\alpha(\vartheta): \alpha \in [0,1]\}$ of sets is nested according to \eqref{eq:nested.test} for each fixed $\vartheta \in \Theta_0$; second, compared to the confidence region case where I needed focal elements indexed by all $\vartheta \in \Theta$, here I only need them for all $\vartheta \in \Theta_0$.  For the second condition, I will require that a version of compatibility \eqref{eq:compatible} holds for the testing-based focal elements \eqref{eq:S.alpha.test} indexed by $\Theta_0$, i.e., 
\begin{equation}
\label{eq:compatible.test}
\bigcup_{u \in S_\alpha(\vartheta)} \Theta_x(u) \neq \varnothing \quad \text{for all $(x, \vartheta, \alpha) \in \XX \times \Theta_0 \times [0,1]$}. 
\end{equation}
As before, the purpose of this compatibility condition is to ensure that the family of random sets constructed in the proof are such that \eqref{eq:max} holds.  

\begin{thm}
\label{thm:complete.test}
For an unknown parameter $\theta \in \Theta$, and a proper subset $\Theta_0 \subset \Theta$, let $\{T_\alpha: \alpha \in [0,1]\}$ be a family of tests of the hypothesis $H_0: \theta \in \Theta_0$ that satisfies the size and nestedness conditions in \eqref{eq:size} and \eqref{eq:nested.test}, respectively.  Suppose that the sampling model $\prob_{X|\theta}$ admits an association \eqref{eq:basic.assoc} that is compatible with the family of tests in the sense that the compatibility condition \eqref{eq:compatible.test} holds for the focal elements in \eqref{eq:S.alpha.test}.  Then there exists a valid IM for $\theta$ such that, if $T_\alpha^\star$ is the corresponding (fused) plausibility measure-based test 
\[ T_\alpha^\star(x) = 1\bigl\{\uPif_x(\Theta_0) \leq \alpha\bigr\}, \quad (x,\alpha) \in \XX \times [0,1], \]
where $\uPif_x$ is defined in \eqref{eq:fuse}, then 
\begin{equation}
\label{eq:aggressive}
T_\alpha(x) \leq T_\alpha^\star(x) \quad \text{for all $(x,\alpha) \in \XX \times [0,1]$}. 
\end{equation}
Equality holds in \eqref{eq:aggressive}, for a particular $\alpha$, if the size of $T_\alpha$ in \eqref{eq:size} is exactly $\alpha$.
\end{thm}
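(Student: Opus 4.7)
The plan is to mirror the construction and proof of Theorem~\ref{thm:complete}, with focal elements coming from the tests' acceptance regions rather than confidence regions, and with the random-set construction restricted to $\vartheta \in \Theta_0$ (extended vacuously off $\Theta_0$).

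First, I would fix any association $X = a(\theta, U)$, $U \sim \prob_U$, that is compatible with the tests in the sense of \eqref{eq:compatible.test}. For each $\vartheta \in \Theta_0$, I would take a random set $\S \sim \prob_{\S|\vartheta}$ whose nested, closed support is $\SS(\vartheta) = \{S_\alpha(\vartheta) : \alpha \in [0,1]\}$ from \eqref{eq:S.alpha.test}---nestedness follows from \eqref{eq:nested.test} and closedness from the use of $\mathrm{clos}$ in the definition---and whose distribution satisfies \eqref{eq:prob}. For $\vartheta \notin \Theta_0$, I would use the vacuous random set $\S = \UU$ almost surely, so that $\pif_x(\vartheta) = 1$; this makes the $\vartheta$-specific IM trivially valid, and, since $\Theta_0$ is a proper subset, automatically ensures \eqref{eq:max}. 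Theorem~\ref{thm:valid} then delivers validity of each $\vartheta$-specific IM for $\vartheta \in \Theta_0$, and Theorem~\ref{thm:valid.fuse} lifts this to validity of the IM determined by the fused contour $\pif_x$ and the corresponding $\uPif_x$ from \eqref{eq:fuse}.

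The remaining work---showing $T_\alpha(x) \leq T_\alpha^\star(x)$---is handled by a chain of implications analogous to the one in the proof of Theorem~\ref{thm:complete}. Arguing by contrapositive, suppose $\uPif_x(\Theta_0) > \alpha$, so there is some $\vartheta \in \Theta_0$ with $\pif_x(\vartheta) > \alpha$. Defining the index $\alpha(x,\vartheta)$ as in \eqref{eq:alpha}, the P2 relation \eqref{eq:prob} together with the size bound $\prob_U\{S_{\alpha'}(\vartheta)\} \geq 1 - \alpha'$ (which holds for $\vartheta \in \Theta_0$ by \eqref{eq:size}) gives $\pif_x(\vartheta) \leq \alpha(x,\vartheta)$, exactly as in the proof of Theorem~\ref{thm:complete}. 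Hence $\alpha(x,\vartheta) > \alpha$, so by definition there is some $\alpha'' > \alpha$ with $S_{\alpha''}(\vartheta) \cap \UU_x(\vartheta) \neq \varnothing$; selecting $u$ in this intersection yields $T_{\alpha''}(x) = 0$, and the nestedness \eqref{eq:nested.test} propagates this to $T_\alpha(x) = 0$. This establishes \eqref{eq:aggressive}. For the equality claim, when the size of $T_\alpha$ is exactly $\alpha$, the inequality $\pif_x(\vartheta) \leq \alpha(x,\vartheta)$ becomes equality at the least-favorable $\vartheta \in \Theta_0$, which makes the forward implication in the chain above reversible, so $T_\alpha(x) = T_\alpha^\star(x)$.

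The main obstacles I anticipate are two technical points. First, the closure in \eqref{eq:S.alpha.test} means a point $u \in S_{\alpha''}(\vartheta) \cap \UU_x(\vartheta)$ need not literally satisfy $T_{\alpha''}(a(\vartheta,u)) = 0$; one handles this by exploiting the strict inequality $\alpha'' > \alpha$ and applying nestedness once more, or by a mild topological regularity condition on $a$ and $\UU_x(\vartheta)$. Second, verifying the exact-size equality claim requires tracking the supremum in $\prob_{\S|\vartheta}\{\S \subseteq S_{\alpha(x,\vartheta)}(\vartheta)\}$ carefully, since the relation \eqref{eq:prob} involves a sup that must be attained (or continuously approached) at $\alpha(x,\vartheta)$ for equality to hold throughout the chain---the same caveat that appears, implicitly, in the confidence-region case of Theorem~\ref{thm:complete}.
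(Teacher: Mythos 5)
Your proposal follows essentially the same route as the paper's proof: build $\vartheta$-dependent random sets supported on the nested focal elements \eqref{eq:S.alpha.test} with distribution \eqref{eq:prob}, invoke Theorem~\ref{thm:valid.fuse} for validity of the fused IM, bound $\pif_x(\vartheta) \leq \alpha(x,\vartheta)$ using the size condition \eqref{eq:size}, and run the same chain of implications (in contrapositive form) to get \eqref{eq:aggressive}, with the exact-size case making the one-sided implication reversible. Your two refinements---the vacuous extension of the random set off $\Theta_0$ to secure \eqref{eq:max}, and the remark about the closure in \eqref{eq:S.alpha.test}---are minor points of extra care that the paper's proof handles implicitly (via the compatibility assumption) or glosses over, so they do not constitute a different argument.
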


\begin{proof}
Since the test functions only take values 0 and 1, it suffices to show that $T_\alpha^\star(x) = 0$ implies $T_\alpha(x)=0$.  Towards this, I proceed in a way analogous to that in the proof of Theorem~\ref{thm:complete}.  I define a $\vartheta$-dependent random set $\S \sim \prob_{\S|\vartheta}$ as follows.  First, I set the support of $\prob_{\S|\vartheta}$ to be the collection
\[ \SS(\vartheta) = \{S_\alpha(\vartheta): \alpha \in [0,1]\}, \quad \vartheta \in \Theta, \]
where $S_\alpha(\vartheta)$ is as defined in \eqref{eq:S.alpha.test}.  Next, just like in \eqref{eq:prob}, I take the measure $\prob_{\S|\vartheta}$ to be
\[ \prob_{\S|\vartheta}(\S \subseteq K) = \sup_{\alpha: S_\alpha(\vartheta) \subseteq K} \prob_U\{S_\alpha(\vartheta)\}, \quad K \subseteq \UU. \]
Again, $S_\alpha(\vartheta)$ is $\prob_U$-measurable by assumption, and the image $\Theta_x(\S)$ on the parameter space is non-empty with $\prob_{\S|\vartheta}$-probability~1 for all $x$ and $\vartheta$ by the compatibility assumption.  This determines a valid IM with (fused) plausibility measure $\uPif_x$ and corresponding contour function $\pif_x$ just like in the proof of Theorem~\ref{thm:complete}.  

To establish the desired connection between this IM's test of $H_0$ and the given test, first define the index $\alpha(x,\vartheta)$ as in \eqref{eq:alpha}.  Then it follows from the above construction---in particular, \eqref{eq:point}---and the size constraint \eqref{eq:size} on the given family of tests, that 
\begin{align*}
\pif_x(\vartheta) & = \prob_{\S|\vartheta}\{\S \cap \UU_x(\vartheta) \neq \varnothing\} \\
& = 1-\prob_{\S|\vartheta}\{\S \subseteq S_{\alpha(x,\vartheta)}(\vartheta)\} \\
& \leq \alpha(x,\vartheta), 
\end{align*}
where, recall, $\UU_x(\vartheta)=\{u: x=a(\vartheta, u)\}$.  For the two tests $T_\alpha$ and $T_\alpha^\star$, the following relationship holds:
\begin{align*}
T_\alpha^\star(x) = 0 \iff & \uPif_x(\Theta_0) > \alpha \\
\iff & \sup_{\vartheta \in \Theta_0} \pif_x(\vartheta) > \alpha \\
\iff &\pif_x(\vartheta) > \alpha &\text{for some $\vartheta \in \Theta_0$} \\
\implies & \alpha(x,\vartheta) > \alpha & \text{for some $\vartheta \in \Theta_0$} \\
\iff & S_\alpha(\vartheta) \cap \UU_x(\vartheta) \neq \varnothing & \text{for some $\vartheta \in \Theta_0$} \\
\iff & T_\alpha(x) = 0. 
\end{align*}
This proves the first claim of the theorem.  In words, the plausibility measure-based test constructed above has a corresponding size-$\alpha$ test that rejects $H_0$ at least for all the same $x$ that $T_\alpha$ does, and perhaps even for some $x$ that $T_\alpha$ does not.  Therefore, between the two size-$\alpha$ tests, $T_\alpha^\star$ can be no less powerful than $T_\alpha$.  

The two tests are equivalent if the one-sided arrow ``$\Longrightarrow$'' above could be made two-sided.  This holds if and only if the original test's size in \eqref{eq:size} is exactly $\alpha$.  
\end{proof}

The theorem above is stated in terms of the testing decision rules, i.e., $T_\alpha$ and $T_\alpha^\star$, but more can be said.  Indeed, since both tests have a p-value, and since \eqref{eq:aggressive} holds for all $\alpha$ and all $x$, it must be that the p-value for the IM-based test is no smaller than that of the given test, uniformly in $x$. Compare this to \citet{impval} who only give sufficient conditions for when the two p-values are identical.

\section{Technical remarks}
\label{S:remarks}

\subsection{On the compatibility condition \eqref{eq:compatible}}
\label{SS:conditions}


The compatibility condition \eqref{eq:compatible} in Theorem~\ref{thm:complete} is technical and not so transparent, so it will help for me to provide some further explanation.  First, the compatibility condition is trivial whenever $\Theta_x(u)$ is non-empty for every pair $(x,u)$.  This would be the case for regular problems where $X$ represents a minimal sufficient statistic of the same dimension as $\theta$ and there are no non-trivial constraints on the parameter space.  If the confidence region $C_\alpha$ is a function of only the minimal sufficient statistic, then \eqref{eq:compatible} follows directly.  Many problems are of this type.  

Second, the aforementioned regularity is not necessary for compatibility.  Suppose that $Y_1,\ldots,Y_n$ are iid $\unif(\theta,\theta+1)$, and define the minimal sufficient statistic $X=(X_1,X_2)$, the sample minimum and maximum, respectively.  Since $X$ is two-dimensional but $\theta$ is a scalar, the model is considered to be ``non-regular.''  Here I will consider a Bayesian approach and, since this is a location problem, I assign a flat prior to $\theta$; then the posterior is $\unif(X_2-1, X_1)$, and the equi-tailed $100(1-\alpha)$\% credible interval is 
\[ C_\alpha(x) = \bigl[x_1 - (1-d_x)(1-\tfrac{\alpha}{2}), \, x_1 - (1-d_x) \tfrac{\alpha}{2} \bigr], \quad d_x = x_2 - x_1, \]
and it is nested and satisfies the coverage condition \eqref{eq:coverage} with equality.  The natural association is $X=\theta + U$, where $U=(U_1,U_2)$ denotes the minimum and maximum of an iid sample of size $n$ from $\unif(0,1)$.  With this choice, 
\begin{align*}
S_\alpha(\vartheta) & = \{(u_1,u_2): [u_1 + \vartheta - (1-d_u)(1-\tfrac{\alpha}{2}), \, u_1 + \vartheta - (1-d_u)\tfrac{\alpha}{2}] \ni \vartheta \} \\
& = \{(u_1, u_2): \tfrac{\alpha}{2-\alpha} (1-u_2) \leq u_1 \leq \tfrac{2-\alpha}{\alpha} (1-u_2) \}.
\end{align*}
Note that any $(u_1,u_2)$ with $u_1 \leq u_2$ and $u_1 = 1-u_2$ belongs to $S_\alpha(\vartheta)$ for all $\alpha$ and for all $\vartheta$.  In particular, if $\hat\theta(x) = \frac12\{x_1 + (x_2-1)\}$ and $u(x)=x - \hat\theta(x) 1_2$, then $u(x) \in S_\alpha(\vartheta)$ and $\Theta_x(u(x)) = \{\hat\theta(x)\} \neq \varnothing$, which implies \eqref{eq:compatible}.  

However, it's not necessary that $\Theta_x(u)$ be non-empty for each $(x,u)$.  For example, consider an iid normal mean model, with association $X=\theta 1_n + U$, where $X=(X_1,\ldots,X_n)$, $U=(U_1,\ldots,U_n)$, $\theta \in \RR$, $1_n$ is a $n$-vector of unity, and $\phi = \theta$.  Then it is easy to check that $\Theta_x(u)$ is non-empty only for pairs of $n$-vectors $(x,u)$ that differ by a constant shift, which is a set of Lebesgue measure zero.  However, the textbook confidence interval for $\theta$ in this case is $C_\alpha(x) = \xbar \pm z_\alpha^\star n^{-1/2}$, so 
\[ S_\alpha(\vartheta) = \{u: |\ubar| \leq z_\alpha^\star n^{-1/2}\}, \quad \forall \; \vartheta \in \Theta. \]
Then the vector $u = x - \xbar 1_n$ belongs to $S_\alpha(\vartheta)$ for each $(\vartheta, \alpha)$ and, therefore, for this choice of $u$, $\Theta_x(u) = \{\xbar\} \neq \varnothing$, hence \eqref{eq:compatible} holds. 

Based on my experience, the only cases where the compatibility condition \eqref{eq:compatible} may fail is in cases where there are non-trivial parameter constraints, either by design---like in the Poisson-plus-background examples arising in high-energy physics \citep[e.g.,][Sec.~2.1.2]{mandelkern2002}---or by some unique feature of the model formulation.  My example here is of the latter type.  Let $X$ be an observation from the non-central chi-square distribution with known degrees of freedom $d$ but unknown non-centrality parameter $\theta$.  The natural association in this setting is $X = F_\theta^{-1}(U)$, where $U \sim \unif(0,1)$, where $F_\theta$ is the non-central chi-square distribution function.  What creates a problem in this example is the fact that the basic focal elements, $\Theta_x(u) = \{\vartheta: F_\vartheta(x)=u\}$, are empty for sets of $u$ with positive probability.  In particular, if $u > F_0(x)$, where $F_0$ is the central chi-square distribution function, then $\Theta_x(u) = \varnothing$.  Next, consider the basic upper confidence bound 
\[ C_\alpha(x) = \{\vartheta: F_\vartheta(x) \geq \alpha\}, \]
which has exact coverage probability $1-\alpha$.  It is also easy to check that  
\[ S_\alpha(\vartheta) = \{u: C_\alpha(F_\vartheta^{-1}(u)) \ni \vartheta\} = [\alpha,1]. \]
Therefore, for any pair $(x,\alpha)$ such that $F_0(x) \leq \alpha$, it follows that $\bigcup_{u \in [\alpha,1]} \Theta_x(u) = \varnothing$, hence the compatibility condition \eqref{eq:compatible} fails.  

To summarize, the compatibility condition seems to be rather weak, and the only examples I'm aware of where it fails are easily characterized---as involving non-trivial parameter constraints---and arguably obscure.  But a rigorous proof that compatibility holds for one class of examples but not for another presently escapes me.

\subsection{Is fusion necessary?}
\label{SSS:fusion}

Fusion is not absolutely necessary in the sense that there are problems where the focal element's dependence on the parameter disappears and just a single random set would do.  To see this, consider a case where both the statistical model and the confidence region have the following transformation structure.  Let $\G$ be a group of transformations, $g: \XX \to \XX$, with associated group $\Gbar$, such that $X \sim \prob_\theta$ implies $gX \sim \prob_{\bar g \theta}$; here I'll assume that $\Gbar$ acts transitively on $\Theta$.  (Two quick comments about the notation: first, as is customary in this context, I write $gx$ instead of $g(x)$; second, for the present discussion only, I've dropped the ``$X|$'' part of the subscript on $\prob_{X|\theta}$.)  Next, define another associated group $\Gtilde$ such that $\phi(\bar g\theta) = \tilde g \phi(\theta)$.  Then a confidence region $C_\alpha$ for $\phi$ is {\em invariant} if $C_\alpha(gx) = \tilde g C_\alpha(x)$ for all pairs $(x,g) \in \XX \times \G$.  \citet{arnold1984} gives further details on this setup, along with some examples.  For the present context, the key point is that the model can be described by an arbitrary ``baseline'' value $\theta_0$ of $\theta$ and a mapping $g_\theta$ that converts $U \sim \prob_{\theta_0}$ to $X=g_\theta U$ with $X \sim \prob_\theta$.  Therefore, the association \eqref{eq:basic.assoc} looks like $X=g_\theta U$, with $U \sim \prob_{\theta_0}$.  Moreover, if the transformation structure is in terms of a minimal sufficient statistic $X$, as in Arnold, then, e.g., $\G$ acting transitively on $\XX$ implies $\Theta_x(u) \neq \varnothing$ for all $(x,u)$, hence \eqref{eq:compatible}.  To the question of whether fusion is necessary, note that the focal elements $S_\alpha(\vartheta)$ in \eqref{eq:S.alpha}  are free of $\vartheta$, i.e., 
\begin{align*}
S_\alpha(\vartheta) & = \{u \in \XX: C_\alpha(g_\vartheta u) \ni \phi(\vartheta)\} \\
& = \{u \in \XX: \tilde g_\vartheta C_\alpha(u) \ni \tilde g_\vartheta \phi(\theta_0)\} \\
& = \{u \in \XX: C_\alpha(u) \ni \phi(\theta_0)\}. 
\end{align*}
If the random set's focal elements do not depend on $\vartheta$, then this is a ``one random set'' case as described in Section~\ref{SS:one.S}, hence no fusion required.  

However, without the above group transformation structure, it's generally not possible to remove parameter-dependence from the focal elements.  To see this, consider the following alternative formulation with parameter-free focal elements 
\[ S_\alpha = \bigcap_{\vartheta \in \Theta} S_\alpha(\vartheta) = \{u: C(a(\vartheta, u)) \ni \phi(\vartheta) \text{ for all $\vartheta \in \Theta$}\}. \]
Since these sets are free of $\vartheta$, the single random set formulation could be applied.  Recall that a critical piece of the above proof was the fact that $\prob_U\{S_\alpha(\vartheta)\} \geq 1-\alpha$ for each $\vartheta$, a consequence of \eqref{eq:coverage}.  Considering the new $\vartheta$-free focal elements defined above, since $S_\alpha$ is generally much smaller than any individual $S_\alpha(\vartheta)$, there is reason to doubt that the analogous relation ``$\prob_U(S_\alpha) \geq 1-\alpha$'' holds.  So, unless there is some special structure, like the group transformation structure described above, which implies that $S_\alpha(\vartheta) \equiv S_\alpha$, then the step that fuses together a family parameter-dependent plausibility functions apparently can't be avoided.  

I'll end with two comments related to fusion in the context of hypothesis tests.  First, compared to the case of confidence regions, the present situation is potentially different because the above construction only requires focal elements indexed by points $\vartheta$ in $\Theta_0$.  So, in the extreme---but sometimes practically relevant---case of a point null hypothesis, fusion is not necessary.  Since a confidence region can be interpreted as a collection of point null tests corresponding to every point in the parameter space, it is comforting that the technical requirements are much weaker in cases where only a single point null test is desired.  But the real utility of formal hypothesis tests compared to confidence regions is their ability to handle non-trivial composite hypotheses.

Second, the construction in \citet{impval}, which establishes a  connection between IM plausibilities and classical p-values, does not involve the fusion step.  How is this possible?  To see the connection most clearly, I'll express the test $T_\alpha$ in terms of a real-valued test-statistic $\tau$, i.e., so that $T_\alpha(x) = 1\{\tau(x) > t_\alpha\}$, where $t_\alpha$ is an appropriate critical value, then the focal elements take the form 
\[ S_\alpha(\vartheta) = \text{clos}(\{u \in \UU: \tau(a(\vartheta,u)) < t_\alpha\}), \quad \vartheta \in \Theta_0. \]
As above, an idea to eliminate the $\vartheta$-dependence is to take an intersection,
\[ \bigcap_{\vartheta \in \Theta_0} S_\alpha(\vartheta) = \Bigl\{u \in \UU: \sup_{\vartheta \in \Theta_0} \tau(a(\vartheta, u)) \leq t_\alpha \Bigr\}. \]
The reader may recognize the right-hand side as exactly the focal elements used in the \citet{impval} construction (modulo the equivalent choice to index by $t=t_\alpha$ instead of $\alpha$).  But this intersection operation shrinks the focal elements considerably, which puts extra demands on other aspects of the problem.  For example, it may require that the model have a group transformation structure and that the testing problem be suitably invariant.  \citet{impval} impose the following condition on $\tau$,  
\[ \prob_U\Bigl\{ \sup_{\vartheta \in \Theta_0} \tau\bigl(a(\vartheta,U)\bigr) \leq t_\alpha \Bigr\} = \inf_{\vartheta \in \Theta_0} \prob_U\bigl\{ \tau\bigl(a(\vartheta,U)\bigr) \leq t_\alpha \bigr\}, \quad \text{for all $\alpha$}, \]
which must be roughly as restrictive as the aforementioned group invariance assumption.  Therefore, the result in Theorem~\ref{thm:complete.test}, which allows for fusion, is significantly more general then the result in \citet{impval}.

\subsection{From a constructive proof to a practical algorithm}
\label{re:algorithm}

In most theoretical investigations in statistics, the proof strategy is practically irrelevant.  Here, however, the fact that the proofs of Theorems~\ref{thm:complete} and \ref{thm:complete.test} are {\em constructive} has some important practical consequences.  Specifically, the construction of a valid IM---via random sets and a fusion operation---in the two proofs effectively defines an {\em algorithm} one can use to develop new IM-based methods.  That is, take a given family of procedures, define a family of $\vartheta$-dependent random sets with focal elements as in \eqref{eq:S.alpha}, then its distribution as in \eqref{eq:prob}, and finally return the fused plausibility measure as in \eqref{eq:fuse}, from which a potentially new procedure can be derived, which is provably valid and no-less-efficient then the original procedure.  Several examples where this {\em IM algorithm} is carried out are presented in Sections~\ref{S:examples} and \ref{S:more} below, with some more specific details in Section~\ref{S:algorithm} about the algorithm's implementation and where to find ``default'' procedures satisfying \eqref{eq:size} and/or \eqref{eq:coverage} that can be used as input to this IM algorithm.

\subsection{Direct vs.~indirect IM construction}

In Section~\ref{S:source}, I described how to construct a valid IM directly without making use of an existing test or confidence region, whereas the main results in Section~\ref{S:main} focus on a construction driven by a given test or confidence region.  From a practical point of view, what's the benefit of one approach compared to the other?  

It deserves mention that, in cases where the given confidence region is only for a feature $\phi$ of $\theta$, then the IM for $\theta$ determined by Theorem~\ref{thm:complete} would be focused primarily on that feature.  That is, while inference would be valid for any assertion about $\theta$, it would generally be inefficient except for assertions specifically relevant to $\phi$.  In that case, unless I was sure that the only assertions I cared about where those concerning $\phi$, then I might prefer the direct IM construction because it would give me the flexibility to answer other questions with some efficiency.  If the given confidence region is for the full parameter $\theta$, then the construction coming from Theorem~\ref{thm:complete} is more attractive.  First, it's free in the sense that I don't have to think of something myself; second, it's sure to be at least as efficient as the confidence regions I started with.  At the very least, the characterization result in Theorem~\ref{thm:complete} should provide some insights on what are the ``good'' random sets to use in the direct construction.

\section{Illustrations}
\label{S:examples}

Here I present three relatively simple examples where a standard confidence region is available that achieves the coverage property \eqref{eq:coverage} and can be used as input to the ``IM algorithm.''  Section~\ref{S:more} considers two non-trivial examples, in the context of hypothesis testing, based on new ideas developed below in Section~\ref{S:algorithm}.  

\subsection{Binomial problem}
\label{SS:binomial}

Let $X \sim \bin(n,\theta)$, where the size $n$ is known but the success probability $\theta \in [0,1]$ is unknown.  A standard confidence interval for $\theta$, satisfying the coverage probability condition \eqref{eq:coverage}, is the Clopper--Pearson interval
\[ C_\alpha(x) = \{\vartheta: F_\vartheta(x) \geq \tfrac{\alpha}{2}, \, 1-F_\vartheta(x-1) \geq \tfrac{\alpha}{2}\}, \quad \alpha \in (0,1), \]
where $F_\theta$ is the $\bin(n,\theta)$ distribution function.  It is well-known \citep[e.g.,][Figure~11]{bcd2001} that the coverage probability of $C_\alpha$ varies wildly as a function of $\theta$ so it does not have the nice ``uniform exactness'' properties that pivot-based intervals enjoy.  Therefore, the IM algorithm coming out of the proof of Theorem~\ref{thm:complete} may lead to some improvements over the Clopper--Pearson interval.  

To start, I will take the association in \eqref{eq:basic.assoc} as 
\[ F_\theta(X-1) < U \leq F_\theta(X), \quad U \sim \unif(0,1). \]
This is a standard formula for simulating a binomial variate, and I write the solution $X$ as $F_\theta^{-1}(U)$, which can be computed via the {\tt qbinom} function in R.  Then the support set $S_\alpha(\vartheta)$ is given by 
\[ S_\alpha(\vartheta) = \{u: F_\vartheta(F_\vartheta^{-1}(u)) \geq \tfrac{\alpha}{2}, \, 1-F_\vartheta(F_\vartheta^{-1}(u)-1) \geq \tfrac{\alpha}{2}\}. \]
The set $\UU_x(\vartheta) = \{u: F_\vartheta(x-1) < u \leq F_\vartheta(x)\}$ is an interval, and I need the supremum of all $\alpha$ such that $S_\alpha(\vartheta) \cap \UU_x(\vartheta) \neq \varnothing$.  This value is the index $\alpha(x,\vartheta)$ in \eqref{eq:alpha}.  A unique feature of this and other discrete data problems is that the support sets $S_\alpha(\vartheta)$ may not be empty as $\alpha \to 1$, so it is possible for $S_\alpha(\vartheta) \cap \UU_x(\vartheta)$ to be non-empty for all $\alpha$, in which case $\alpha(x,\vartheta) = 1$.  In the present case, this index is given by 
\[ \alpha(x,\vartheta) = \begin{cases}
2\{1 - F_\vartheta(x-1)\}, & \text{if $F_\vartheta(x-1) \geq \frac12$} \\
2 F_\vartheta(x), & \text{if $F_\vartheta(x) \leq \frac12$} \\
1, & \text{if $F_\vartheta(x-1) < \frac12 < F_\vartheta(x)$}.
\end{cases} \]
Then the IM's fused plausibility contour is 
\[ \pif_x(\vartheta) = \begin{cases}
1, & \text{if $F_\vartheta(x-1) < \frac12 < F_\vartheta(x)$} \\
1 - g(\vartheta,x) , & \text{otherwise}.
\end{cases} \]
where
\[ g(\vartheta, x) = \prob_U\{F_\vartheta(F_\vartheta^{-1}(U)) > \tfrac{\alpha(x,\vartheta)}{2}, \, F_\vartheta(F_\vartheta^{-1}(U)-1) < 1-\tfrac{\alpha(x,\vartheta)}{2}\}, \]
which can be readily computed numerically, with or without Monte Carlo; see below. 

As an illustration, consider an experiment with $n=25$ trials and $X=17$ observed successes.  Figure~\ref{fig:binom} shows a plot of the plausibility contour extracted directly from the Clopper--Pearson confidence interval, which is just $\alpha(x,\theta)$, and that for the IM described above.  Note that the latter is no wider than that from Clopper--Pearson.  In fact, the fused plausibility contour is exactly the ``acceptability function'' developed in \citet{blaker2000}, hence, the corresponding plausibility interval is the same as Blaker's interval, which has been previously shown to be a significant improvement over the classical Clopper--Pearson interval.  This gives a concrete demonstration of the  improvement resulting from the IM construction.  Also, one can use available software packages \citep[e.g.,][]{blaker.R} for efficient IM-based inference in this binomial application.  See, also, \citet{balch.binomial} for a more recent improvement, also based on imprecise probabilistic ideas.

\begin{figure}[t]
\begin{center}
\scalebox{0.70}{\includegraphics{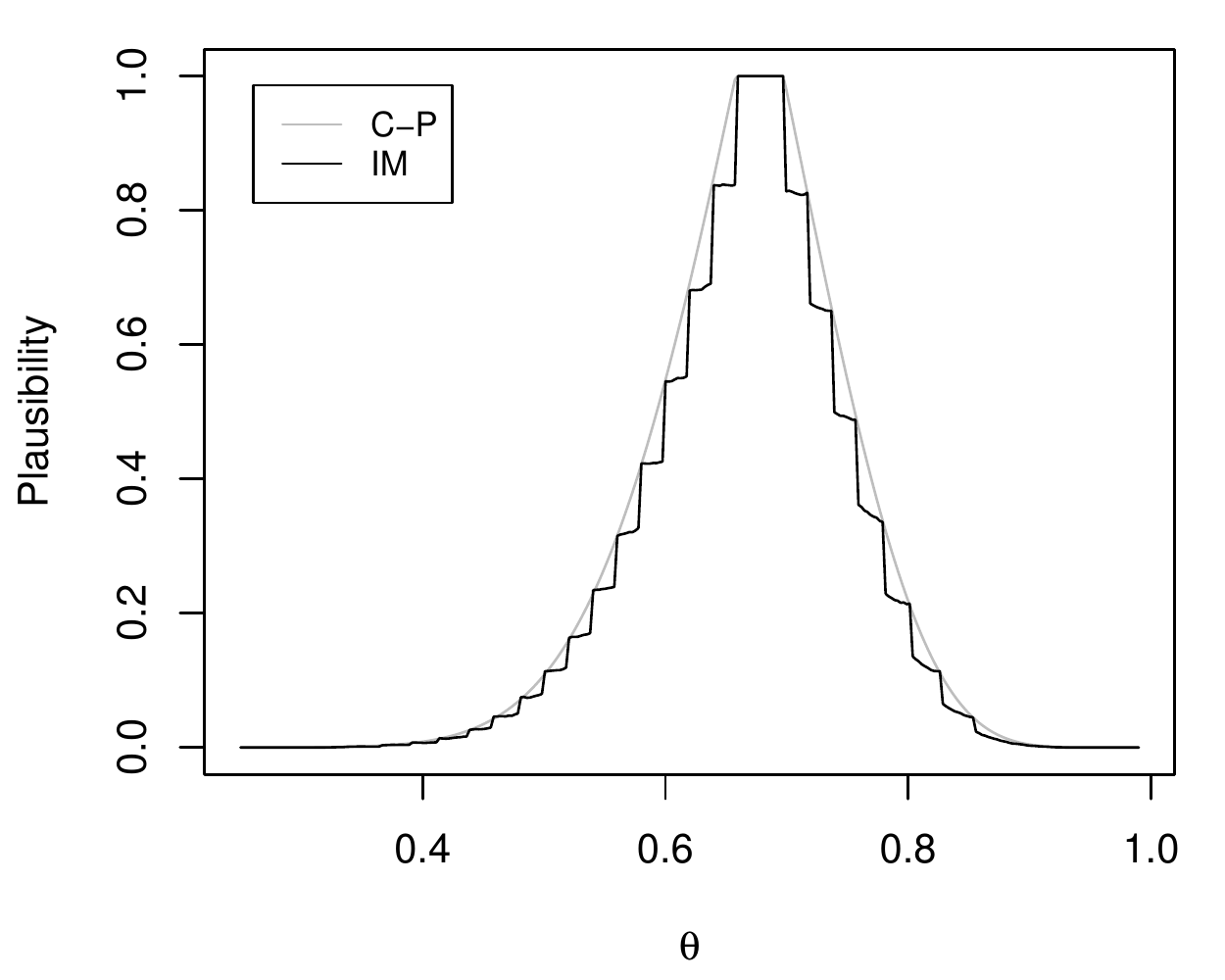}}
\end{center}
\caption{Plot of the IM and the Clopper--Pearson (C--P) plausibility contours for the binomial problem, with $n=25$ and $X=17$.}
\label{fig:binom}
\end{figure}

\subsection{Behrens--Fisher problem}
\label{SS:bf}

Consider two independent data sets $Y_{11},\ldots,Y_{1n_1}$ and $Y_{21},\ldots,Y_{2n_2}$ from $\nm(\mu_1, \sigma_1^2)$ and $\nm(\mu_2, \sigma_2^2)$, respectively, where $\theta=(\mu_1,\mu_2,\sigma_1^2, \sigma_2^2)$ is unknown, but the goal is inference on $\phi(\theta) = \mu_1-\mu_2$.  Let $X=(M_1, M_2, V_1, V_2)$ denote the minimal sufficient statistic, consisting of the two sample means and two sample variances, respectively.  Then the famous Hsu--Scheff\'e interval \citep{hsu1938, scheffe1970} is given by 
\[ C_\alpha(x) = (m_1 - m_2) \pm t_{\alpha,n}^\star \, f(v_1, v_2), \]
where $f(v_1, v_2) = (v_1/n_1 + v_2/n_2)^{1/2}$ and $t_{\alpha,n}^\star$ is the $1-\frac{\alpha}{2}$ quantile of a Student-t distribution with $\min\{n_1,n_2\} - 1$ degrees of freedom.  It follows from results in, e.g.,  \citet{mickey.brown.1966} that the above interval achieves the coverage probability condition \eqref{eq:coverage}, but could be somewhat conservative for small $n$ and/or certain $\theta$ configurations.  Here I derive an IM counterpart for $C_\alpha$ following the proof of Theorem~\ref{thm:complete}.  

For the association \eqref{eq:basic.assoc}, I will take 
\[ D = \phi + f(\sigma_1^2, \sigma_2^2) \, U_1, \quad V_k = \sigma_k^2 U_{2k}, \quad k=1,2, \]
where $D=M_1-M_2$ and the auxiliary variable $U=(U_1,U_{21},U_{22})$ consists of independent components with $U_1 \sim \nm(0,1)$ and $U_{2k} \sim \chisq(n_k-1) / (n_k - 1)$, $k=1,2$.  Since 
\[ \frac{D - \phi}{f(V_1,V_2)} = \frac{f(\sigma_1^2, \sigma_2^2)}{f(\sigma_1^2 U_{21}, \sigma_2^2 U_{22})} \, U_1, \]
the support sets $S_\alpha(\vartheta)$ can be written as  
\[ S_\alpha(\vartheta) = \Bigl\{ u=(u_1, u_{21}, u_{22}): \frac{|u_1|}{\{\lambda_\vartheta u_{21} + (1-\lambda_\vartheta) u_{22}\}^{1/2}} \leq t_{\alpha, n}^\star \Bigr\}, \]
where $\lambda_\vartheta = \{1 + (n_1 \sigma_1^2)/(n_2 \sigma_2^2)\}^{-1}$, which takes values in $[0,1]$ as a function of $\vartheta$.  Note that the set $S_\alpha(\vartheta)$ depends on $\vartheta$ only through $\lambda_\vartheta$, not on $\phi$ or on the specific values of $\sigma_1^2$ and $\sigma_2^2$.  Then the index $\alpha(x,\vartheta)$ in \eqref{eq:alpha} is given by 
\[ \alpha(x,\vartheta) = 2 \bigl| F_n\bigl( t(x,\vartheta) \bigr) - 1 \bigr|, \]
where $t(x,\vartheta) = (d - \varphi)/f(v_1,v_2)$ and $F_n$ is the Student-t distribution function with $\min\{n_1,n_2\}-1$ degrees of freedom, which actually only depends on $\varphi$.  Then the fused plausibility measure has contour $\pif_x(\vartheta) = \prob_U\{S_{\alpha(x,\vartheta)}(\vartheta)\}$, which can be readily evaluated via Monte Carlo.  Then the corresponding marginal plausibility contour for $\phi$ can be obtained by optimization over $\lambda$.  

For illustration, consider the example in \citet[][p.~83]{lehmann1975} on travel times for two different routes; summary statistics are:
\begin{align*}
n_1 & = 5 & m_1 & = 7.580 & v_1 & = 2.237 \\
n_2 & = 11 & m_2 & = 6.136 & v_2 & = 0.073.
\end{align*}
The goal is to compare the mean travel times for two different routes.  Figure~\ref{fig:bf} shows a plot of the plausibility contour $\alpha(x,\cdot)$ for $\phi$ extracted directly from the Hsu--Scheff\'e confidence interval, the $\lambda$-specific plausibility contours above for a range of $\lambda$, and the corresponding marginal plausibility contour based on optimization over $\lambda$.  As expected, the $\lambda$-specific plausibilities (gray lines) are individually more efficient than that of Hsu--Scheff\'e.  However, marginalizing over $\lambda$ via optimization widens the plausibility contours to agree exactly with Hsu--Scheff\'e, as predicted by Theorem~\ref{thm:complete}.  

\begin{figure}[t]
\begin{center}
\scalebox{0.70}{\includegraphics{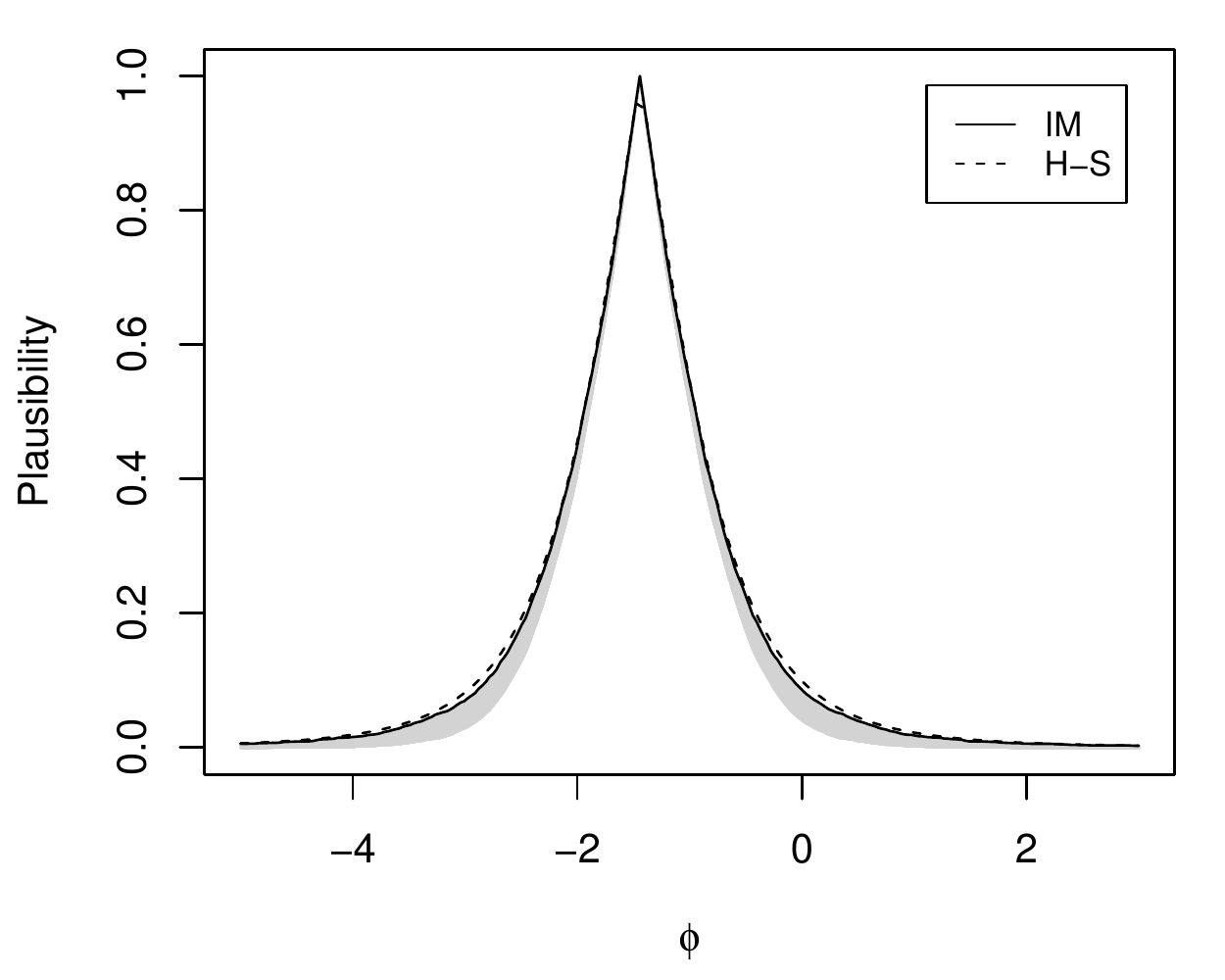}}
\end{center}
\caption{IM and Hsu--Scheff\'e (H--S) plausibility contours for the travel times illustration of the Behrens--Fisher problem. Differences between the solid and dashed lines are the result of optimizing over only a finite range of $\lambda$.}
\label{fig:bf}
\end{figure}

\subsection{Nonparametric one-sample problem}
\label{SS:one.sample}

Although the paper's developments so far focus primarily on finite-dimensional or parametric models, there is no reason that they can't be applied to infinite-dimensional or nonparametric problems just the same.  As a simple illustration, consider real-valued data $X=(X_1,\ldots,X_n)$ iid with distribution function $F$, where the goal is inference on the ``parameter'' $F$.  A standard nonparametric confidence region is based on the Dvoretsky--Kiefer--Wolfowitz inequality \citep[DKW,][]{dkw1956} and corresponds to a ball with respect to the sup-norm $\|\cdot\|_\infty$ on $\RR$, i.e., 
\[ C_\alpha(x) = \{F: \|\hat F_x - F\|_\infty \leq \delta_{n,\alpha}\}, \]
where $\hat F_x$ denotes the empirical distribution function based on a sample $x=(x_1,\ldots,x_n)$, and $\delta_{n,\alpha} = (2n)^{-1/2}\{\log(2/\alpha)\}^{1/2}$.  Equivalently, $C_\alpha(x)$ can be viewed as a confidence band, giving pointwise lower and upper confidence limits; see Figure~\ref{fig:np} below.  Since the DKW inequality upon which the coverage probability results for $C_\alpha$ are based holds for all $F$, there will be some $F$ at which the coverage is conservative, it's possible that the procedure derived from the IM algorithm will be more efficient.  

To keep things simple here, I'll write $F^{-1}$ for the inverse of a distribution function $F$, and ignore the possible non-uniqueness due to $F$ flattening out on some interval.  Then a natural association is 
\[ X_i = F^{-1}(U_i), \quad i=1,\ldots,n, \]
where $U=(U_1,\ldots,U_n)$ are iid $\unif(0,1)$.  Writing this in vectorized form, i.e., $X=F^{-1}(U)$, it follows that 
\[ S_\alpha(F) = \{u: C_\alpha(F^{-1}(u)) \ni F\} = \{u: \|\hat F_{F^{-1}(u)} - F\|_\infty \leq \delta_{n,\alpha}\}, \]
and it is easy to confirm that $S_\alpha(F) \equiv S_\alpha$ does not depend on $F$, hence there is no need for fusion.  Furthermore, the index $\alpha(x,F)$ defined in the proof of Theorem~\ref{thm:complete} is 
\[ \alpha(x,F) = \min\bigl\{ 1, 2 e^{-2n \|\hat F_x - F\|_\infty^2} \bigr\}. \]
Therefore, the plausibility contour is $\pi_x(F) = 1-\prob_U\{S_{\alpha(x,F)}\}$, which can readily be evaluated via Monte Carlo for any fixed $F$.  

For an illustration, consider the nerve data set from \citet{cox.lewis.book}, analyzed in \citet[][Example~2.1]{wasserman2006book} on waiting times between pulses along a nerve fiber.  The original data has 799 observations but, to make differences between various methods easier to see for this illustration, I'm working with a randomly chosen subsample of size $n=100$.  A plot of the empirical distribution function along with the lower and upper 95\% confidence bands based on the DKW inequality is shown in Figure~\ref{fig:np}.  While it's computationally a challenge to draw a new pair of plausibility bands corresponding to the output of the IM algorithm, it's easy to confirm that the bands are slightly narrower, as predicted by Theorem~\ref{thm:complete}.  Take the candidate $F$ equal to the lower bound in Figure~\ref{fig:np}, so that $\alpha(x,F) \approx 0.05$.  However, it turns out that $\pi_x(F) \approx 0.045$.  Since the 95\% lower confidence bound based on the DKW inequality is not included in the IM algorithm's 95\% plausibility region, it follows that the latter is narrower and, therefore, slightly more efficient.  With the full data set, however, the DKW region and that based on the IM algorithm are roughly the same.  

\begin{figure}[t]
\begin{center}
\subfigure[Empirical distribution and DKW bands]{\scalebox{0.6}{\includegraphics{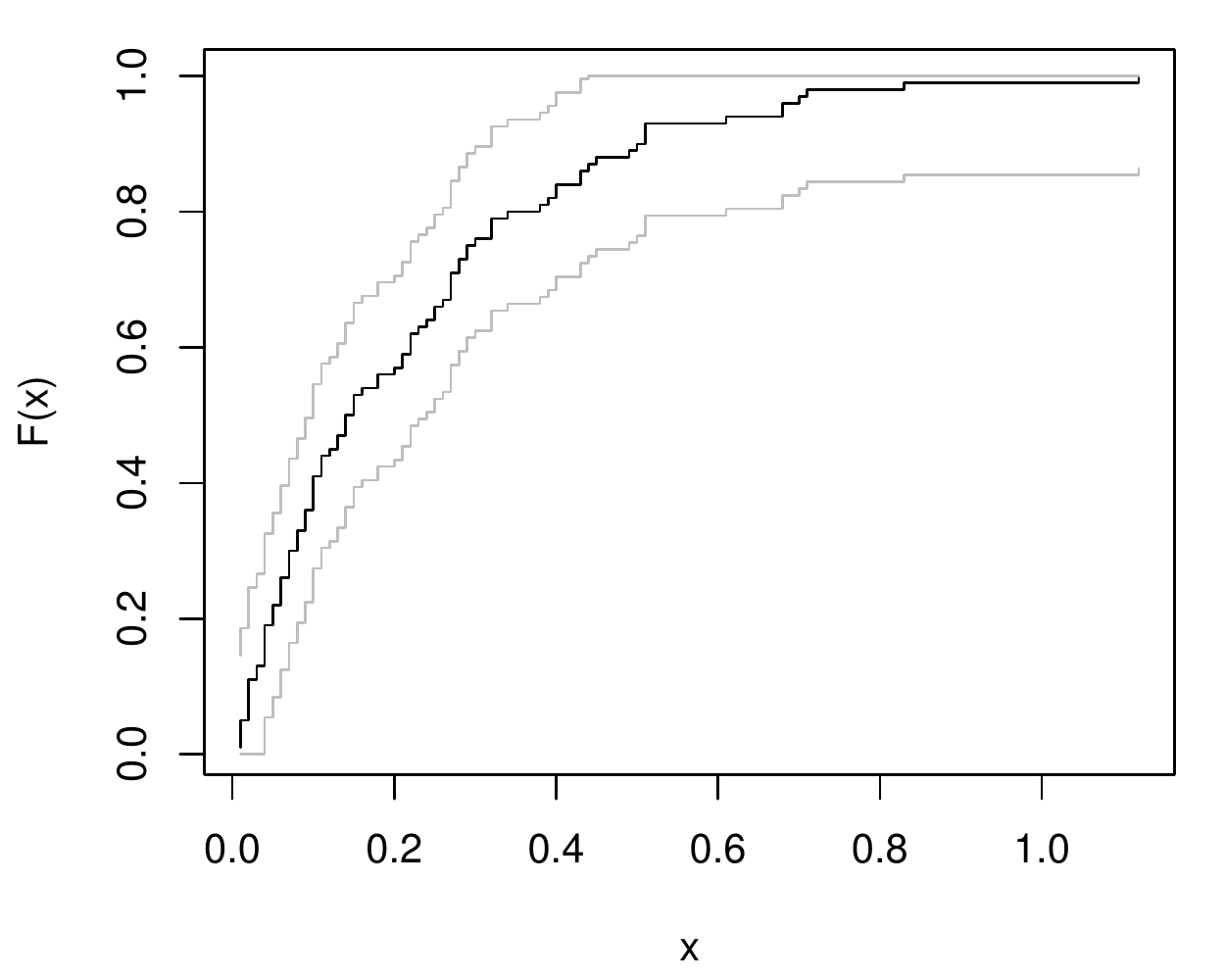}}}
\subfigure[Same as (a), Grenander estimate added]{\scalebox{0.6}{\includegraphics{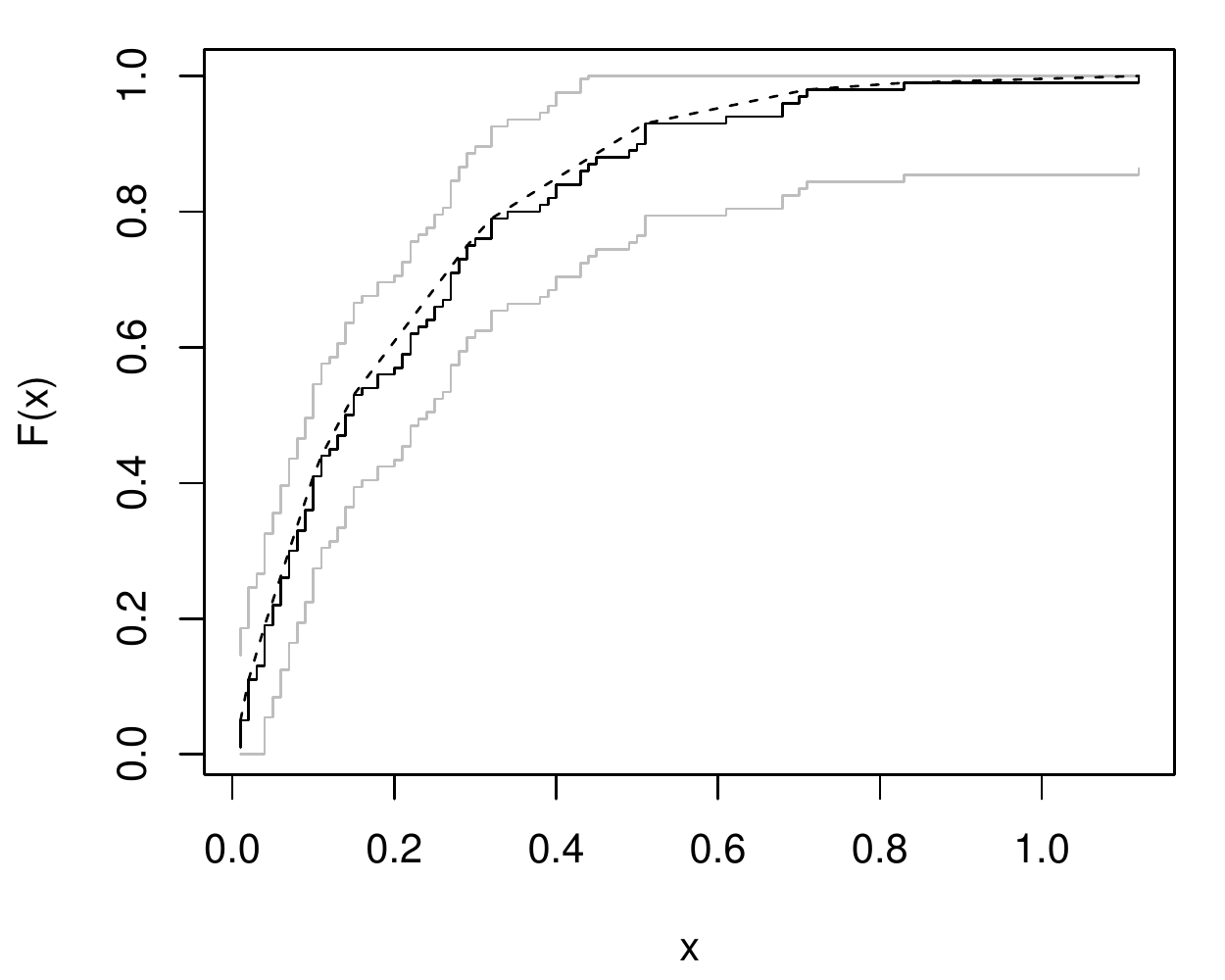}}}
\end{center}
\caption{Empirical distribution function (black) and lower and upper 95\% confidence bands (gray) for the nerve data example; subsample of size $n=100$. Panel~(b) includes the Grenander estimator $\hat F_x^\uparrow$ (dashed), which is very close to $\hat F_x$.}
\label{fig:np}
\end{figure}

A confidence band allows one to test point null hypotheses, that is, if $F_0$ is the hypothesized distribution, then simply calculate $\pi_x(F_0)$ and compare that value to the desired significance level.  But what about composite null hypotheses?  Suppose the goal is to test if the distribution function $F$ has a density $f$ that is monotone decreasing on $[0,\infty)$.  A proper test based on the DKW confidence band would be to reject if and only if the intersection between the band and the set of all distribution functions with monotone densities is empty.  However, it is a highly non-trivial computational problem to enumerate this set of distributions and compare to the given confidence band.  
I'll investigate a formal IM-based test of monotonicity in Section~\ref{SS:one.sample2}, but the formulation in terms of a plausibility contour suggests a simple approach.  That is, I could consider expressing the composite null in terms of its ``best representative'' and plug that representative in to the plausibility contour to get an overall assessment.  As a ``best representative,'' an obvious choice is the distribution function $\hat F_x^{\uparrow}$ corresponding to the maximum likelihood estimator of \citet{grenander}.  Then an informal test would reject the hypothesis that the density is monotone decreasing at level $\alpha$ if $\pi_x(\hat F_x^\uparrow)$ is less than $\alpha$.  The intuition is that if even the ``best'' of the distributions with a monotone density is implausible, then the hypothesis itself must be implausible too.  Figure~\ref{fig:np}(b) shows the Grenander estimate $\hat F_x^\uparrow$ overlaid and it's virtually indistinguishable from $\hat F_x$, which suggests that monotonicity is plausible.  By my calculation, $\pi_x(\hat F_x^\uparrow) \approx 0.85$, which confirms the suggestion from the plot.  
I make no claims here that this naive test is theoretically valid, although the superficial connections to other legitimate p-value adjustments for testing composite hypotheses \citep[e.g.,][]{berger.boos.1994} suggests it's at least a reasonable approach.

\section{Beyond the basics: an IM algorithm}
\label{S:algorithm}

To prove the existence results in Theorems~\ref{thm:complete} and \ref{thm:complete.test}, I constructed a valid IM that was no less efficient than a given confidence region or test.  The idea was to use the provided confidence region or test to define the focal elements of the (parameter-dependent) random set.  From there, I used Condition~P2 of Theorem~\ref{thm:valid} to define the distribution of the random set supported on the constructed focal elements.  And once the random set distribution is specified, the IM construction proceeds through the A-, P-, and C-steps as described in Section~\ref{SS:one.S}.  Therefore, the only step in the IM construction that must be tailored to the given problem or confidence/testing procedure is specification of the focal elements.  The final IM can, of course, be used to make valid inference about any assertion of interest and, in general, this would require some problem-specific considerations and computations which I won't discuss here. 

The IM algorithm is most effective when it can be applied to a simple and  conservative procedure.  In that case, the IM algorithm will preserve that method's validity but improve on its efficiency.  The Clopper--Pearson method was one such procedure, but that is specific to the binomial problem.  Are there any more general methods?  Recently, \citet{wasserman.universal} proposed what they call a {\em universal inference} framework which is simple and provides basic validity guarantees with virtually no assumptions.  A consequence of the method's universality is that it tends to be conservative, thus making this an ideal starting point for the aforementioned IM algorithm. For concreteness, here I will discuss the details of the IM algorithm for the case when the given procedure is a test or confidence region from the universal inference framework of \citet{wasserman.universal}.  

First I give a brief overview of their approach; they offer several variations of it, but here I'll focus on the most basic version, the one based on what they call the {\em split likelihood ratio}.  Given a statistical model $\prob_{X|\theta}$, let $\vartheta \mapsto L_X(\vartheta)$ denote the likelihood function based on data $X$ of size $n$.  Define an operators $D_1$ and $D_2$ which, when applied to a data set $x$, {\em split} the data into to disjoint subsets $D_1 x$ and $D_2 x$; for example, $D_1 x$ could be the first $\lfloor n/2 \rfloor$ components, and $D_2 x$ the remaining $n-\lfloor n/2 \rfloor$ components.  Then the basic idea is to form a likelihood ratio type statistic where one part of the data. say, $D_1 x$, is used for the likelihood function itself and then the other part, $D_2x$, is used to estimate $\theta$.  In particular, a $100(1-\alpha)$\% confidence region for $\theta$ based on the split likelihood ratio is 
\[ C_\alpha(x) = \bigl\{ \vartheta \in \Theta: L_{D_1 x}(\vartheta) \geq \alpha L_{D_1 x}(\hat\theta_{D_2 x}) \bigr\}, \]
where $\hat\theta_{D_2x}$ is, e.g., the maximum likelihood estimator of $\theta$ based on the data set $D_2 x$.  Similarly, for testing a hypothesis $H_0: \theta \in \Theta_0$, \citet{wasserman.universal} define the split likelihood ratio test as 
\[ T_\alpha(x) = 1\bigl\{ \textstyle\max_{\vartheta \in \Theta_0} L_{D_1x}(\vartheta) <  \alpha L_{D_1x}(\hat\theta_{D_2x}) \bigr\}. \]
A simple and elegant argument---without distributional assumptions or appeal to asymptotic approximations---proves that the split likelihood ratio procedures are valid in the sense that coverage probabilities for confidence regions match/exceed the advertised level and, similarly, split likelihood ratio tests control the size at or below the advertised level.  \citet{wasserman.universal} also extend these results to cover cases involving nuisance parameters as well as nonparametric problems; each of these extensions will be used below in Section~\ref{S:more}.  The drawback, however, to a method that is provably valid under no assumptions is that its performance is conservative in many applications.  This tendency to be conservative is what leaves room for the IM algorithm to improve efficiency.   

With these universal inference procedures as the starting point, the IM algorithm proceeds to construct the focal elements for the (parameter-dependent) random sets.  For an association $X=a(\theta,U)$, of course I can write $D_k X = a(\theta, D_k U)$, $k=1,2$.  Then the focal elements for the split likelihood ratio confidence region is 
\begin{align*}
S_\alpha(\vartheta) & = \{ u: C_\alpha(a(\vartheta,u)) \ni \vartheta \} \\
& = \bigl\{u: L_{a(\vartheta, D_1 u)}(\vartheta) \geq \alpha L_{a(\vartheta, D_1 u)}(\hat\theta_{a(\vartheta, D_2 u)}) \bigr\}, \quad \vartheta \in \Theta. 
\end{align*}
Similarly, for the split likelihood ratio test, the focal elements are 
\begin{align*}
S_\alpha(\vartheta) & = \{u: T_\alpha(a(\vartheta, U)) = 0\} \\
& = \bigl\{ u: \max_{t \in \Theta_0} L_{a(\vartheta, D_1 u)}(t) \geq \alpha L_{a(\vartheta, D_1 u)}(\hat\theta_{a(\vartheta, D_2u)}) \bigr\}, \quad \vartheta \in \Theta_0.
\end{align*}
Just like the examples in Section~\ref{S:examples}, the random set distributions, i.e., $\prob_{\S|\vartheta}$, are determined by two things: first, the distribution $\prob_U$ of $U$, which is defined by the user's choice of association $X=a(\theta,U)$ and, second, the $\prob_U$-probabilities of these focal elements, as described in Condition~P2 of Theorem~\ref{thm:valid}.  

Since the split likelihood ratio framework is more abstract than those in the previous examples above, I'll give a quick simple illustration here. Suppose that $X=(X_1,\ldots,X_n)$ is an iid sample from $\nm(\theta,1)$ and the goal is to construct a confidence interval for $\theta$.  Assume that $n=2m$ and that the data splitting is into the first and second groups, each of size $m$.  First, for the association, I make the obvious choice $X= a(\theta, U)$, where $a(\vartheta,u) = \vartheta 1_n + u$ and $U \sim \prob_U := \nm_n(0,I)$.  Then the focal elements on which the random set will be supported are given by 
\begin{align*}
S_\alpha(\vartheta) & = \{u \in \RR^n: C_\alpha(a(\vartheta, u)) \ni \vartheta\} \\
& = \{(u_1,u_2) \in \RR^m \times \RR^m: L_{a(\vartheta, u_1)}(\vartheta) \geq \alpha L_{a(\vartheta, u_1)}(\hat\theta_{a(\vartheta, u_2)}) \} \\
& = \{(u_1,u_2) \in \RR^m \times \RR^m: -\bar u_2^2 + 2 \bar u_1 \bar u_2 \leq -\tfrac{4}{n} \log\alpha\}, 
\end{align*}
where $\bar u_k = m^{-1} \sum_{j=1}^m u_{kj}$ is the mean of the $m$-vector $u_k$, for $k=1,2$.  Note that the focal elements do not depend on $\vartheta$, i.e., $S_\alpha(\vartheta) \equiv S_\alpha$, which is not surprising given the location parameter form of the model.  Second, it is easy to verify that 
\[ \alpha(x,\vartheta) = \min\bigl\{1 \, , \,  L_{D_1x}(\vartheta) /  L_{D_1x}(\hat\theta_2) \bigl\}, \]
with $\hat\theta_2$ the average of the $m$-vector $D_2x$, and the upper bound at 1 will in some cases, like in the binomial problem above, result in a plateau on the plausibility contour; however, for large enough $n$, the probability of seeing a plateau will be rather small.  Then that plausibility contour is given by 
\[ \pif_x(\vartheta) = 1-\prob_U(S_{\alpha(x,\vartheta)}) = \prob_U\bigl\{ -\bar U_2^2 + 2 \bar U_1 \bar U_2 > -\tfrac{4}{n} \log \alpha(x,\vartheta) \bigr\}, \]
which is easy to evaluate via Monte Carlo. This could easily be optimized as well to evaluate the plausibility assigned to any assertion about $\theta$. 

To see what the result of this construction looks like, I generated a sample of size $n=2m=200$ from a standard normal distribution.  Figure~\ref{fig:slr.normal} shows a plot of the p-value function from the split likelihood ratio test (whose level sets correspond to the split likelihood ratio confidence interval), the IM-based plausibility contour, and, for reference, the ``optimal'' plausibility contour based on the known sampling distribution of the sample mean.  Both the split likelihood ratio and the IM-based improvement have the plateau in a neighborhood around the full sample mean, but otherwise the latter has much narrower contours compared to the former, hence is considerably more efficient.  As expected, both of these procedures are far less efficient than the optimal procedure.  However, the real value of the split likelihood ratio approach, and the IM algorithm, is for problems where no ``optimal'' procedure is available, like in the two examples below.

\begin{figure}[t]
\begin{center}
\scalebox{0.70}{\includegraphics{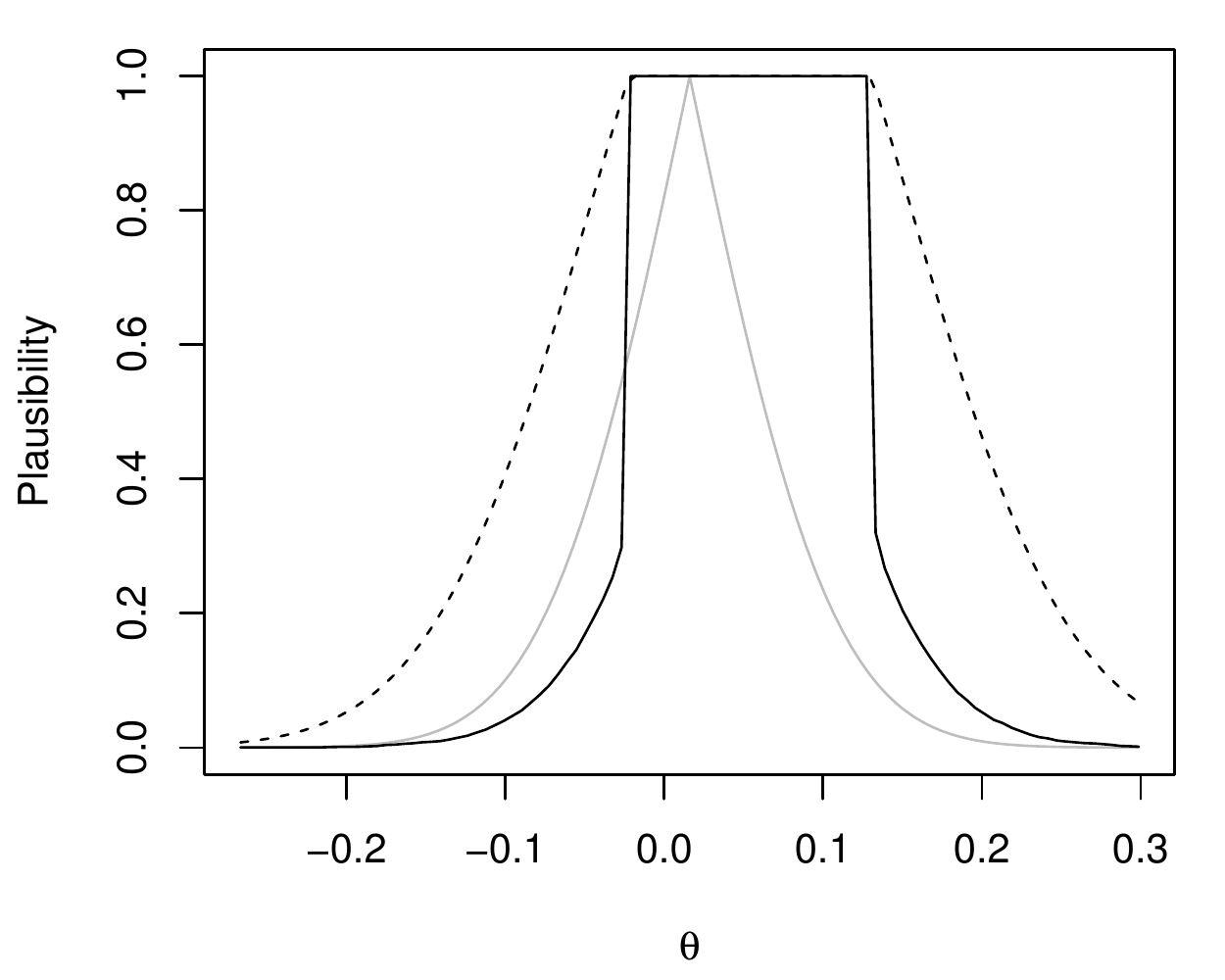}}
\end{center}
\caption{Comparison of plausibility contours based on the universal split likelihood ratio procedure (dashed), the improvement based on the IM algorithm (solid), and the optimal one based on the standard $z$-test (gray).}
\label{fig:slr.normal}
\end{figure}

\section{Two more illustrations}
\label{S:more}

\subsection{Mixture model testing problem}

An important class of statistical problems is those that involve mixture distributions.  The mixture model formulation makes it possible to capture rather complex distributional forms with a relatively small number of individually simple components.  This is what makes mixture models ideal for density estimation applications.  However, despite the simplicity and ubiquity, mixture models themselves are notoriously difficult, in large part due to singularities \citep[e.g.,][]{drton2009} that makes the standard asymptotic distribution approximation of \citet{wilks1938} inapplicable.  So it's remarkable that, with so many statisticians having investigated the problem of inference in mixture models over the years, there was, until just this year, no test of one normal distribution versus a mixture of two normal distributions that provably controls the Type~I error rates at the advertised level, not even asymptotically.  Of course, various tests are available that show strong empirical performance \citep[e.g.,][]{mclachlan1987}, but none were proved to be theoretically valid until the very recent seminal work of \citet{wasserman.universal}.   

As described in Section~\ref{S:algorithm} above, as soon as a provably valid test or confidence region is available, it can be used as input into the IM algorithm to improve its efficiency.  To illustrate this, I'll reproduce the example presented in Section~4 of \citet{wasserman.universal} to compare both the size and power of their proposed split likelihood ratio test of one versus two normal components in a mixture model.  More specifically, I consider the class of true mixture distributions 
\begin{equation}
\label{eq:power.mix}
\tfrac12 \nm(-\mu, 1) + \tfrac12 \nm(\mu, 1), \quad \text{indexed by $\mu \geq 0$}, 
\end{equation}
with $\mu=0$ being the null.  Figure~\ref{fig:power.mix} shows the power, as a function of $\mu \geq 0$, for both the split likelihood ratio and corresponding IM-based test, with $n=100$, using level $\alpha=0.05$.  It is clear that both tests control the Type~I error at 0.05, confirming the theory, but the IM test has considerably higher power.  The general theory presented above suggests that this gain in efficiency is not specific to this simple one- versus two-component testing problem, but a detailed investigation will be carried out elsewhere.

\begin{figure}[t]
\begin{center}
\scalebox{0.65}{\includegraphics{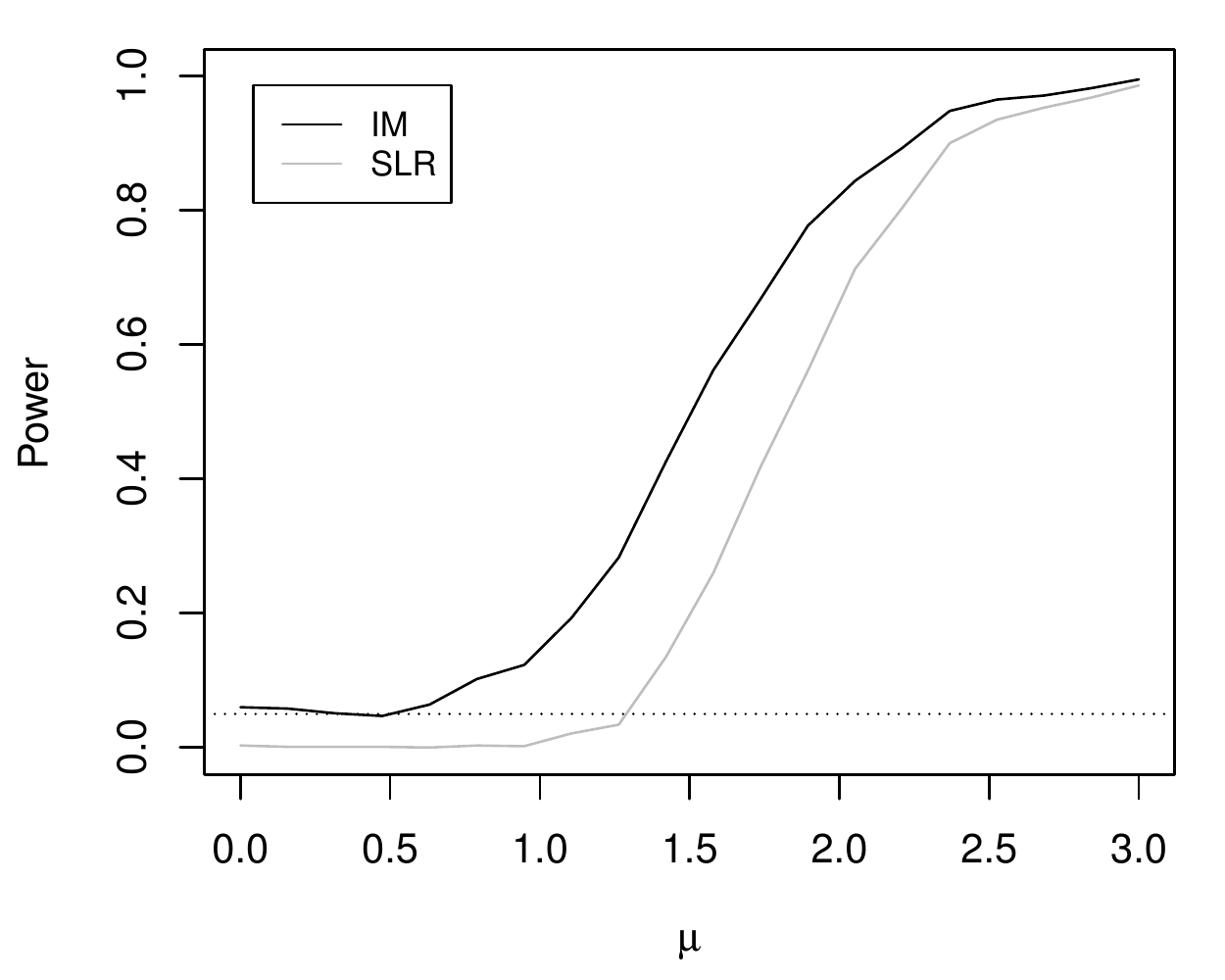}}
\end{center}
\caption{Mixture model problem, testing one component versus two.  Plot shows the power of the split likelihood ratio test and that of the IM test as functions of $\mu$, with $n=100$ and $\alpha=0.05$.}
\label{fig:power.mix}
\end{figure}

\subsection{Nonparametric one-sample problem, revisited}
\label{SS:one.sample2}

Reconsider the one-sample problem from Section~\ref{SS:one.sample} above.  Here, however, I'll assume that the distribution $F$ has a density, $f$, supported on $[0,\infty)$, and that the goal is to test if $f$ is monotone non-increasing, i.e., $H_0: f \in \FF_0$, where $\FF_0$ denotes the class of decreasing densities on $[0,\infty)$.  Estimation of a monotone density is a classical problem, and the nonparametric maximum likelihood estimator was developed by \citet{grenander} and extensively studied by \citet{prakasarao}, \citet{groeneboom}, and \citet{bala.wellner.2007}, among others.  In the context of regression, testing for monotonicity and inference under shape constraints has been more widely studied, e.g., \citet{bowman.etal.1998}, \citet{banerjee.wellner.2001}, \citet{cai.shape.2013}, and \citet{chakraborty.ghosal.proj, chakraborty.ghosal.testreg}.  But the literature on testing a density for monotonicity is rather scarce---to my knowledge, the only detailed investigation is in a very recent technical report, \citet{chakraborty.ghosal.testdensity}.  However, this problem can be directly tackled using the general framework developed in \citet{wasserman.universal}, and below I describe one such test and the corresponding IM-based improvement.  

For an iid sample $X=(X_1,\ldots,X_n)$ from a distribution with density $f$, the likelihood function is defined as 
\[ f \mapsto L_X(f) = \prod_{i=1}^n f(X_i), \quad f \in \FF_0. \]
If the density is assumed to be monotone decreasing on $[0,\infty)$, then the nonparametric maximum likelihood estimator, $f_X^\dagger$, exists, is a piecewise constant function, and equal to the left-continuous derivative of the least concave majorant of the empirical distribution function \citep{grenander}; I use the {\tt grenander} function in the R package {\tt fdrtool} \citep{fdrtool} to evaluate $f_X^\dagger$ in my illustration below.  I also need an estimator of $f$ without the monotonicity constraint, so I define $\hat f_X$ to be a kernel density estimator---based on the default settings in the {\tt density} function in R---fit to log-transformed data $X$ and then transformed back to the interval $[0,\infty)$.  Using a pair of data-splitting operators $D_1$ and $D_2$ as described above, the universal split likelihood ratio test I employ here is given by 
\[ T_\alpha(x) = 1\bigl\{ L_{D_1x}(f_{D_1x}^\dagger) < \alpha L_{D_1x}(\hat f_{D_2x}) \bigr\}. \]
Note that I make no claims that my suggested approach for dealing with the likelihood under the alternative is ``optimal'' in any way.  Regardless, the general theory in \citet{wasserman.universal} implies that the above test provably controls the Type~I error rate at level $\alpha$, though it could be very conservative.  To my knowledge, this is the only test available in the literature that can make this claim.  

For the IM-based improvement, take the association $X=a(f,U)$ to be the same as in Section~\ref{SS:one.sample}, i.e., 
\[ X_i = F^{-1}(U_i), \quad i=1,\ldots,n, \]
where $F$ is the distribution function corresponding to the density $f$ in $\FF_0$ and $U=(U_1,\ldots,U_n)$ is a $n$-vector of iid $\unif(0,1)$ random variables.  Then, for a generic monotone density $f$, the focal elements $S_\alpha(f)$ are given by 
\[ S_\alpha(f) = \bigl\{u: L_{a(f,D_1u)}(f_{a(f,D_1u)}^\dagger) \geq \alpha L_{a(f,D_1u)}(\hat f_{a(f,D_2u)}) \}. \]
Unlike the previous example, these focal elements depend on the generic $f$.  This dependence is not a problem for evaluating the plausibility contour at a given $f$, 
\[ \pif_x(f) = 1 - \prob_U\{S_{\alpha(x,f)}(f)\}, \]
where 
\[ \alpha(x,f) = \min\bigl\{1 \, , \, L_{D_1x}(f) / L_{D_1x}(\hat f_{D_2x}) \bigr\}. \]
The challenge, of course, is that the corresponding test requires maximizing $\pif_x(f)$ over $\FF_0$.  The ``most plausible'' monotone density at which this maximum is attained should be close to the ``most likely'' monotone density, which is the Grenander estimator evaluated based on the entire data set $x$.  Therefore, I propose the approximation 
\begin{equation}
\label{eq:mono.approx}
\uPif_x(\FF_0) := \sup_{f \in \FF_0} \pif_x(f) \approx \pif_x(f_x^\dagger). 
\end{equation}
Since the right-hand side is generally smaller than the supremum on the left-hand side, a test based on this approximation is generally more aggressive than the genuine IM-based test investigated in Theorem~\ref{thm:complete.test}.  However, the split likelihood ratio test used to construct it is rather conservative, so I have not noticed any loss of validity based on the suggested more-aggressive approximation.  A detailed investigation into the test's performance will be presented elsewhere.  Of course, there is nothing special about testing monotonicity---a similar split likelihood ratio approach can be followed to test for other shape constraints, such as log-concavity \citep{bala.etal.2009}.  

For a quick illustration, consider a gamma distribution with shape parameter $\xi > 0$ and scale parameter fixed at 1.  The case $\xi=1$ corresponds to a monotone density, where the null hypothesis is true.  For $\xi > 1$, the density is not monotone, and I'll investigate here the power of the split likelihood ratio and corresponding IM-based tests as a function of $\xi$.  Here I take $n=300$ and, as before, split the data into its first and second halves.  Figure~\ref{fig:power.mono} displays the power function for the two tests, with specified significance level $\alpha=0.05$.  At the null hypothesis $\xi=1$, both tests have power/size much lower than the specified level, suggesting these are relatively conservative.  However, as $\xi$ increases, the rate at which the power of the IM-based test increases is considerably faster than that of the split likelihood ratio test.  This strongly suggests that the former is both valid and more powerful than the latter, consistent with the conclusion in Theorem~\ref{thm:complete.test}.  It remains to better understand the effect of the admittedly-naive approximation in \eqref{eq:mono.approx}.  

\begin{figure}[t]
\begin{center}
\scalebox{0.65}{\includegraphics{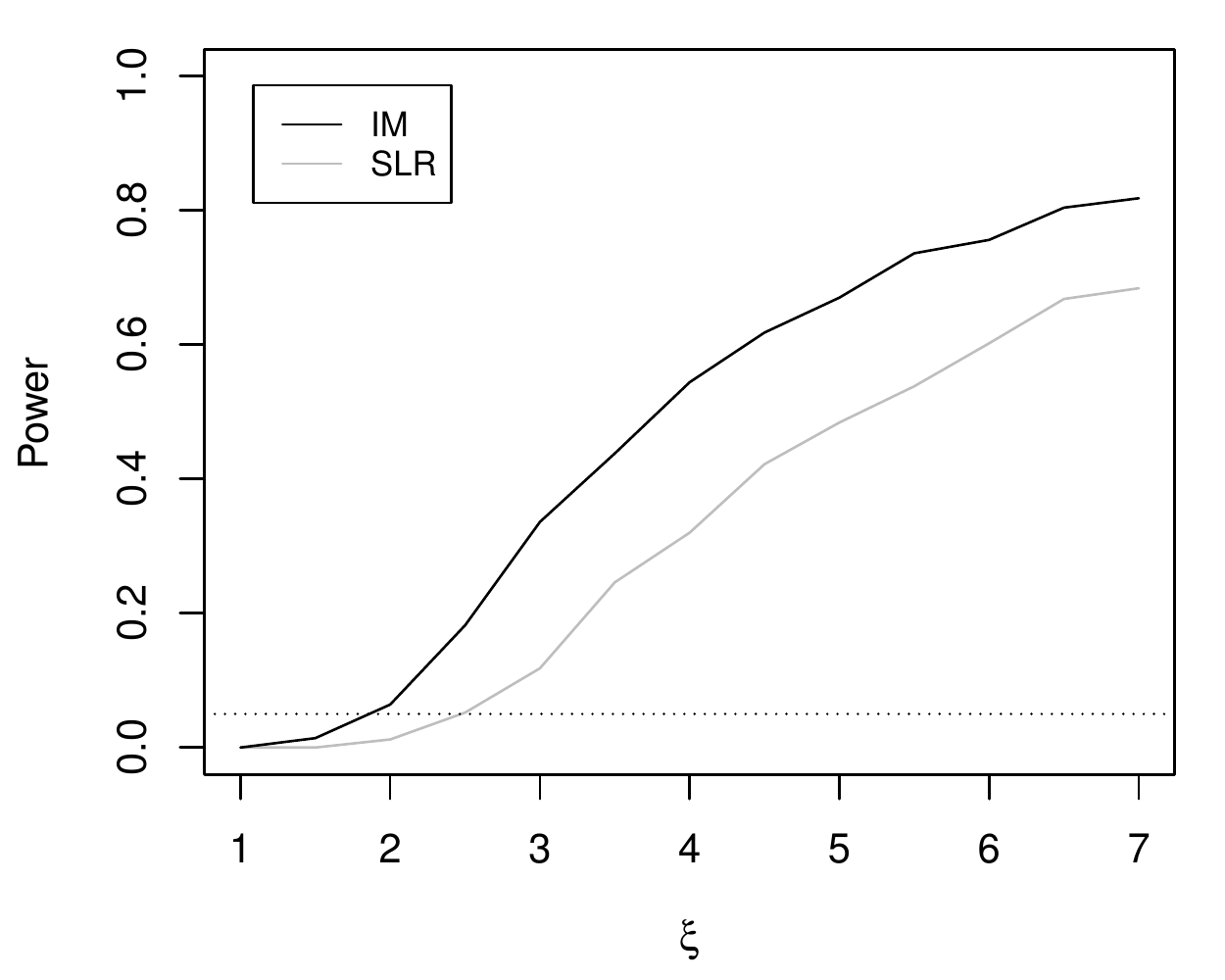}}
\end{center}
\caption{Testing monotonicity of the density in a nonparametric one-sample problem.  Plot shows the power of the split likelihood ratio test and that of the IM test as functions of the gamma shape parameter $\xi$, with $n=300$ and $\alpha=0.05$.}
\label{fig:power.mono}
\end{figure}

Finally, revisiting the nerve data example from Section~\ref{SS:one.sample}, with the subsample of size $n=100$, the split likelihood ratio statistic is about 1.7, so the universal test $T_\alpha(x)$ as defined above does not reject at any $\alpha$ level.  This also implies that $\alpha(x,f_x^\dagger) = 1$ and so the IM algorithm returns plausibility $\uPif_x(\FF_0) = 1$.  This conclusion agrees with what's shown in Figure~\ref{fig:np}, where the empirical distribution function almost exactly agrees with the best monotone approximation.

\section{Extensions and open problems}
\label{S:open}

\subsection{Restricting the class of assertions}
\label{SS:rich}

The validity condition in Definition~\ref{def:valid} puts constraints on the capacity $\uPi_x$ and/or on the collection $\A$ of assertions.  My approach here was to keep the collection $\A$ as rich as possible, e.g., the entire power set, and see what requirements are needed on the capacity to achieve validity.  It turns out that consonance is an important (maybe necessary---see below) condition for a capacity to achieve validity for such a rich collection of assertions.  A natural question is if more flexibility can be achieved in the capacity's structure by restricting the collection $\A$ of assertions.  

Consider a simple case with a scalar $\theta$.  As I argued in Section~\ref{SS:isnt.valid}, in order for the capacity to be a probability measure, the class $\A$ would need to be restricted to half-lines in order to maintain validity.  This is unsatisfactory to me because it means, e.g., the effort to convert confidence intervals into a confidence distribution was for nothing, since it can only reliably be used to read off confidence intervals.  

But there would be less extreme cases that deserve consideration.  Speaking in terms of the dual $\lPi_x$ of $\uPi_x$, since the terminology is more familiar and standard, an interesting question is the following: for what collection $\A$ of assertions might a $k$-monotone capacity \citep[e.g.,][]{huber1973.capacity, huber.strassen.1973, sundberg.wagner.1992}, a belief function or $\infty$-monotone capacity \citep[e.g.,][]{shafer1976, shafer1979, nguyen1978}, or a more general lower prevision \citep[e.g.,][]{walley1991, miranda2008, lower.previsions.book} be valid?  

Another seemingly innocuous restriction is the requirement that \eqref{eq:valid} hold for all $\alpha \in [0,1]$.  This implies that $(x,A) \mapsto \uPi_x(A)$ can and will take values arbitrarily close to 1.  It seems to me that this more or less implies that $\uPi_x$ must be a consonant plausibility function.  More flexibility can be achieved if validity was relaxed so that \eqref{eq:valid} held for all $\alpha \in [0,\bar\alpha]$, with $\bar\alpha < 1$.  \citet{walley2002} showed that a carefully constructed generalized Bayes solution could achieve this kind of relaxed validity condition.  However, his ``$\bar\alpha$'' is directly tied to the size of his prior credal set, so achieving even this relaxed validity property from a generalized Bayes approach loses a lot of efficiency compared to solutions I'm advocating for based on consonant plausibility measures.  The generalized Bayes solution has other nice properties that the plausibility measures don't, e.g., the strong coherence property in \citet[][Theorem~7.8.1]{walley1991}, so it remains to better understand the balance between the various properties.






\subsection{Fused plausibility}
\label{SS:open.fused}

Section~\ref{SS:family.S} introduced the notion of a fused plausibility measure, but only when the condition \eqref{eq:max} holds, can the resulting capacity $\uPi_x$ really be called a plausibility measure with the consonance property.  Interestingly, \citet[][Theorem~4.4]{miranda.etal.2004} shows that, in all practical cases, plausibility measures are equivalent to the distribution of a random set.  This means that, in addition to the representation in terms of a family of parameter-dependent random set distributions on the $\UU$-space, there is another representation of the output $\uPi_x$ in terms of a {\em data-dependent} random set on $\Theta$.  This latter random set can be pulled back, through the association, to a random set on $\UU$, but it would generally still depend on data.  This suggests a potentially interesting duality between the parameter-dependent random sets and the fusion operation in Section~\ref{SS:family.S} and the data-dependent random sets that have appeared in, e.g., \citet{leafliu2012} and \citet{imexpert}.  The question is: how can this duality be leveraged for improved understanding and for use in new and challenging problems?



\subsection{Approximate validity}

My focus here has been on situations where a test or confidence region is available that exactly achieves the error rate control properties in \eqref{eq:size} or \eqref{eq:coverage}, respectively.  Thanks to the recent developments in \citet{wasserman.universal} discussed in Section~\ref{S:algorithm}, my starting point covers virtually every problem.  However, it's still worth asking what happens if the procedure used as input the IM algorithm was only ``approximately valid'' in some specific sense, e.g., as the sample size $n$ approaches infinity. For example, will using an only approximately valid input to the IM algorithm return a procedure that's still approximately valid and potentially more efficient?

\subsection{Beyond inference}
\label{SS:decision}

My focus in this paper has been exclusively on statistical inference, but this is not the only problem that statisticians and data scientists are interested in.  One of those important problems is {\em prediction}, the use of observed data and any other relevant information to predict the next observation.  A first IM approach to prediction,  based on a well-specified statistical model with parameter $\theta$, along the lines presented in Section~\ref{SS:one.S}, was given by \citet{impred}.  More recently, \citet{imconformal, impred.isipta} showed that conformal prediction \citep[e.g.,][]{vovk.shafer.book1, shafer.vovk.2008} is naturally understood as an imprecise probability and, moreover, that it can be similarly characterized using random sets as I did in this paper for p-values and confidence regions.  These results showcase the flexibility and potential of IMs and, furthermore, they help to position conformal prediction as part of an (imprecise) probabilistic inference framework.  

Another relevant problem is decision-making, and there is a close connection between probability and formal decision theory.  Indeed, it's very natural to encode ones preferences in terms of a utility or loss function and define the optimal decision as that which suitably optimizes the expected utility or loss.  Since a framework that deals with probabilities has an immediate connection to decision theory, this is often seen as a win for the Bayesian side.  However, having a formal connection between the frequentist framework and imprecise probabilities changes this perspective, because there is a well-developed theory of optimal decisions in the imprecise probability literature; see, e.g., \citet{denoeux.decision.2019} for a recent survey.  So, it would be interesting to investigate the valid IMs developed here in the context of a formal imprecise-probabilistic decision theory framework.  

A related matter is coherence in the sense of \citet{definetti1937}.  Of course, de Finetti's theory suggests that coherence can only be achieved via (finitely additive) probabilities, but there is now an extensive literature \citep[e.g.,][]{walley1991, williams.previsions} showing that de Finetti's theory is a special case of a more general notion of coherence that holds for imprecise probabilities.  So if the frequentist output can be understood as an imprecise probability, as I showed here, then there ought to be a parallel frequentist theory of coherence, one that balances the statistical considerations of validity as in Definition~\ref{def:valid} with those logical considerations of avoiding sure loss, etc.  Some first developments on this are given in \citet{imconformal} but in the context of prediction; see, also, \citet{gong.meng.update} and the related discussion in \citet{gong.meng.discuss}.  New results along these lines are in development and will be reported elsewhere.

\section{Conclusion}
\label{S:discuss}

Motivated by a desire to understand what is required to achieve valid probabilistic inference, and a recognition that ordinary additive probabilities are limited unless genuine prior information is available, this paper had the ambitious goal to characterize the so-called ``frequentist approach'' via imprecise probabilities.  To achieve this goal, I showed that for every suitable frequentist procedure with error rate control guarantees, there exists a valid plausibility measure such that the corresponding procedure derived from it is at least as efficient as the given procedure.  Moreover, these valid plausibility measures have the additional structure of those derived from the framework of \citet{imbook}.  This puts the Bayesian and frequentist schools on a common ground in terms of having a mathematically rigorous underpinning, namely, precise and imprecise probability theory, respectively.  As discussed in Section~\ref{S:intro}, this common ground provides an opportunity to clearly explain the interpretation of p-values and confidence, which would strengthen our reputation in the eyes of other scientists and improve the quality of scientific communication.  A further consequence of this common ground is that, with a slight generalization of the validity condition \eqref{eq:valid} taken here, the nature, form, and strength of prior information determine where on the spectrum between precise and imprecise probability one needs to be for validity.  These details will be reported elsewhere.



I argued in Section~\ref{S:intro} that this imprecise probabilistic perspective can have a positive impact on education and scientific communication.  An additional benefit is that it wouldn't require a complete education reform.  For advanced courses, the instructor may opt to present details about imprecise probability, but this isn't necessary.  Theorems~\ref{thm:complete}--\ref{thm:complete.test} show that all the p-values and confidence intervals presented in introductory courses (e.g., based on Student's t-test, analysis of variance, and linear regression) correspond to plausibility measures, so the instructor can simply give students the intuitive plausibility-based interpretations as I explained in Section~\ref{S:intro}.  This is precisely what I do in the courses I teach.  Based on the feedback I've received, students are able to understand my interpretation of p-values and confidence even if they don't know anything about the mathematics of imprecise probability.\footnote{For example, a masters student who took my course wrote this to me after her job interview: ``I wanted to let you know that your discussion of plausibility in ST503 made it easier for me to explain what a p-value is when I was interviewing.  Thank you for the new perspective on the subject!''}  

At the {\em BFF4} meeting at Harvard in 2017,\footnote{BFF stands for both ``Bayes, fiducial, and frequentist'' and ``best friends forever.''  See  \citet{xl.bff.2017} for details about the group and its mission, and for more explanation about the name.} Nancy Reid's invited talk summarized each of the different approaches discussed at the meeting, with a one-liner from the respective advocates of those approaches.  Professor Reid's one-liner for me was ``IMs are the only answer.'' It took me a few years to finish this paper and justify that claim, but better late than never.  The point is that everyone wants procedures with good properties and, as the results of Section~\ref{S:main} show, whatever approach one takes to find a good procedure, it's ultimately a valid IM based on the developments in \citet{imbook}.

\section*{Acknowledgments}

The author is grateful to Duncan Ermini Leaf and Chuanhai Liu for valuable feedback on a previous version of this manuscript, and to Michael Balch and Leonardo Cella for discussions that helped to shape the arguments presented in this revised version.  This work is partially supported by the U.S.~National Science Foundation, under grants DMS--1811802 and SES--2051225.


\bibliographystyle{apalike}
\bibliography{/Users/rgmarti3/Dropbox/Research/mybib}

\end{document}